\documentclass[final,1p,times]{amsart}

\usepackage{amssymb}
\usepackage{amsthm}
\usepackage{amscd}
\usepackage{amsmath}
\usepackage{amsfonts}
\usepackage{amssymb}
\usepackage{graphicx}
\usepackage{color}
\usepackage{framed}
\usepackage{comment}
\usepackage{chngcntr}
\numberwithin{equation}{section}
\newtheorem{theorem}{Theorem}[section]
\newtheorem{proposition}[theorem]{Proposition}
\newtheorem{lemma}[theorem]{Lemma}
\newtheorem{corollary}[theorem]{Corollary}
\newtheorem{definition}[theorem]{Definition}
\newtheorem{remark}[theorem]{Remark}

\usepackage{mathrsfs}
\usepackage{titletoc}
\usepackage{appendix}
\usepackage{tikz}
\usepackage[breaklinks,colorlinks]{hyperref}
\usepackage[pagewise,mathlines]{lineno}
\usepackage{geometry}
\usepackage{bm}
\usepackage[shortlabels]{enumitem}
\usepackage{mathtools}
\usepackage{cite}

\newcommand{\D}{\Delta}
\newcommand{\ra}{\rightarrow}
\newcommand{\p}{\partial}
\newcommand{\f}{\frac}

\newcommand{\be}{\begin{equation}}
	\renewcommand{\ra}{\rightarrow}
	\newcommand{\ee}{\end{equation}}
\newcommand{\bea}{\begin{eqnarray}}
	\newcommand{\eea}{\end{eqnarray}}
\newcommand{\bna}{\begin{eqnarray*}}
	\newcommand{\ena}{\end{eqnarray*}}
\renewcommand{\o}{\omega}
\renewcommand{\O}{\Omega}
\renewcommand{\le}{\left}
\newcommand{\ri}{\right}

\newcommand{\Rmnum}[1]{\expandafter\@slowromancap\romannumeral #1@}
\makeatother

\usepackage{bbding}

\begin{document}
		
	\title{Nonexistence results for semilinear parabolic and hyperbolic equations on metric graphs}
	
	\author{Yang Liu}
	\address{Yau Mathematical Sciences Center, Tsinghua University, Beijing, 100084, China}
	\email{dliuyang@tsinghua.edu.cn}
	
	\author{Yong Lin}
	\address{Yau Mathematical Sciences Center; Department of Mathematical Sciences, Tsinghua University, Beijing, 100084, China}
	\email{yonglin@tsinghua.edu.cn}
	
	\author{Haohang Zhang}
	\address{Yau Mathematical Sciences Center; Department of Mathematical Sciences, Tsinghua University, Beijing, 100084, China}
	\email{zhanghh22@mails.tsinghua.edu.cn}
	\thanks{Corresponding author: Haohang Zhang}
	
	\subjclass[2020]{35R02, 35A15, 39A12}
	
	\keywords{Metric graphs, the vertex-based and edge-based Laplacian, modified distance functions, test functions, a priori estimates}
	
	\begin{abstract}		
	This paper investigates the nonexistence of solutions to semilinear parabolic and hyperbolic inequalities with positive potentials on metric graphs, including both nonnegative solutions and sign-changing solutions. The Laplacian under consideration is of a nonstandard type, incorporating contributions from both the vertices and edges of the metric graph. We construct a new pseudo-metric and introduce suitable space-time test functions of either coupled or separated type. Under suitable weighted space-time volume growth conditions on the potential, we establish nonexistence results for very weak solutions. More precisely, we show that all such solutions to the inequality must be identically zero.
	\end{abstract}
	
	\maketitle
	
\section{Introduction}
This paper is concerned with the nonexistence of global solutions to semilinear parabolic and hyperbolic inequalities on metric graphs. Specifically, we study the parabolic problem
\begin{equation}\label{s1:1}\left\{\begin{array}{lll}
		u_t(x,t)\geq\Delta_\mathcal{G} u(x,t)+h(x,t) |u(x,t)|^{\sigma}, &(x,t)\in\mathcal{G}\times(0,\infty),&\\[1ex]
		u(x,0)=u_0(x),&x\in\mathcal{G}, &\end{array}\ri.		
\end{equation}
and the hyperbolic problem
\begin{equation}\label{s1:2}\left\{\begin{array}{lll}
		u_{tt}(x,t)\geq\Delta_\mathcal{G} u(x,t)+h(x,t) |u(x,t)|^{\sigma}, &(x,t)\in\mathcal{G}\times(0,\infty),&\\[1ex]
		u(x,0)=u_0(x),&x\in\mathcal{G},&\\[1ex]
		u_t(x,0)=u_1(x),&x\in\mathcal{G}, &\end{array}\ri.		
\end{equation}
where $h:\mathcal{G}\times[0,\infty)\ra\mathbb{R}$ is a positive potential, $\sigma > 1$, and $u_0,u_1:\mathcal{G} \to \mathbb{R}$ are given initial data. Here, $\Delta_\mathcal{G}$ denotes the Laplacian on the metric graph $\mathcal{G}=(\mathcal{V},\mathcal{E})$, incorporating both vertex and edge contributions (see Definition \ref{s0:D1} and Appendix \ref{D}). 

Problems \eqref{s1:1}  and \eqref{s1:2} are posed on a metric graph $\mathcal{G}$, a continuous spatial network, with edges as physical line segments joined at vertices. Unlike combinatorial graphs, therefore, the edges of a metric graph are not assigned directions or weights as abstract auxiliary relations between vertices; instead, they are more appropriately viewed as intervals glued together at vertices. This feature allows dynamical behavior to evolve along the edges. As a result, metric graphs provide a better description of network models arising in various fields; see \cite{R,S-M-A,M-S-V,M,B-K}. Within this framework, dynamical behavior on metric graphs is usually described by PDEs. These equations are defined on the edges and satisfy specific boundary conditions at the vertices, such as the Kirchhoff transmission condition or the homogeneous Neumann boundary condition. Such a setting arises in quantum wires \cite{K-Z,R-S} and has been extensively studied from a mathematical perspective \cite{F-T1,F-T2,Fr}. We will specify later the precise assumptions on the graph $\mathcal{G}$, see \eqref{s3:1}. While PDEs on combinatorial graphs have been extensively studied in the literature, e.g., \cite{B-C,S-T-Z,E-M,G-T,H-L,K-R,M2,S-S-V,L-H-N,G-L-Y-2,Z-L-Y1,H-L-Y,L-Yang,Liu1,G-L-Y-Z,H-L-M,L-W-Z,M-P-S,M-P2}, the setting of metric graphs has undergone substantial development, with key contributions presented in \cite{A-S-T,B-D-S,C-J-S,D-S-T,K-M-N,P-T1,P-T2}.

The analysis of semilinear parabolic equations of the form \eqref{s1:1} has a rich history in Euclidean space and on manifolds. In particular, the global existence and finite-time blow-up of solutions have been extensively investigated in $\mathbb{R}^N$. For $h\equiv1$, it was shown by Fujita in \cite {Fu}, and by Hayakawa in \cite {Ha} for the critical case, that
\begin{itemize}
	\item if $1<\sigma\leq\sigma^*:=1+\frac{2}{N}$, then any nonnegative nontrivial solution blows up in finite time;
	\item if $\sigma>\sigma^*$, then there exists a global solution corresponding to a nonnegative initial value that is sufficiently small in a suitable sense. 
\end{itemize}
Analogous results for the parabolic equation \eqref{s1:1} have also been established on manifolds \cite{B-P-T,G-S-X-X,M-M-P,Z}, as well as in bounded domains of $\mathbb{R}^N$ \cite{Me1,Me2}. In recent years, the blow-up and global existence of solutions to \eqref{s1:1} on combinatorial graphs has been widely studied. In the case of $h\equiv0$, under suitable assumptions on the graph, Huang \cite{Huang} proved that there exists at most one solution to the parabolic problem
 \begin{equation*}\left\{\begin{array}{lll}
 		u_t(x,t)=\Delta_\mathcal{V} u(x,t), &(x,t)\in\mathcal{V}\times(0,\infty),&\\[1ex]
 		u(x,0)=0,&x\in\mathcal{V}, &\end{array}\ri.		
 \end{equation*}
 provided that $u$ satisfies a growth condition: for some $C>0$, $0<c<1/2$, and a diverging
 sequence $\{R_n\}\subset(0,\infty)$, 
 $$\int_0^T\sum_{x\in B_{R_n}(x_0)}\mu_{\mathcal{V}}(x)u^2(x,t)dt\leq Ce^{cR_n\log R_n},\quad\text{for\ any\ } n\ \text{large\ enough}.$$
 Recently, this result was generalized in \cite{H-K-S} under weaker growth conditions.  It is worth mentioning that, using heat kernel estimates, Lin-Wu \cite{L-W} showed the existence and nonexistence of global nonnegative solutions for the semilinear heat equation 
\begin{equation*}\left\{\begin{array}{lll}
		u_t(x,t)=\Delta_\mathcal{V} u(x,t)+u^{1+\alpha}(x,t), &(x,t)\in\mathcal{V}\times(0,\infty),&\\[1ex]
		u(x,0)=a(x),&x\in\mathcal{V}. &\end{array}\ri.		
\end{equation*}
More recently, under a suitable space-time weighted volume growth condition, Monticelli-Punzo-Somaglia \cite{M-P-S3} investigated the nonexistence of nonnegative nontrivial global solutions to the vertex-based equation \eqref{s1:1}  in the case of $d\mu_{\mathcal{E}}=0$ by imposing  that for some $x_0\in \mathcal{V}$, $R_0>1$, $\alpha\in[0,1]$, $C>0$, 
\begin{equation}\label{s1:3}
	\Delta d(x,x_0)\leq \frac{C}{d^\alpha(x,x_0)}, \quad\forall x\in \mathcal{V}\setminus B_{R_0}(x_0).
\end{equation}
Furthermore, Meglioli \cite{Me}  established the uniqueness of solutions to the vertex-based  equations 
\begin{equation*}\left\{\begin{array}{lll}
		\rho(x) u_t(x,t)-\Delta_\mathcal{V} u(x,t)=0, &(x,t)\in\mathcal{V}\times(0,T],&\\[1ex]
		u(x,0)=0,&x\in\mathcal{V}. &\end{array}\ri.		
\end{equation*}
It was then extended to the edge-based equations on metric graphs by Meglioli-Punzo \cite{M-P}. For other relevant works, representative studies are provided in \cite{G-W,M-Q-D,Wu,L-L-W,M-P-S2}. 

We next turn to the hyperbolic problem \eqref{s1:2} on Euclidean spaces or manifolds. Unlike the case of parabolic equations, the theory concerning nonexistence results for hyperbolic problems has developed at a significantly slower pace. To our knowledge, the earliest contributions addressing the blow-up or non-existence of global solutions to hyperbolic equations in $\mathbb{R}^N$ trace back to \cite{K}, with a more general framework subsequently explored in \cite{M-Po,S}. These classical results establish that the Cauchy problem
\begin{equation*}\left\{\begin{array}{lll}
		u_{tt}(x,t)\geq\Delta u(x,t)+ |u(x,t)|^{q}, &(x,t)\in \mathbb{R}^N\times(0,\infty),&\\[1ex]
		u(x,0)=u_0(x),&x\in\mathbb{R}^N,&\\[1ex]
		u_t(x,0)=u_1(x),&x\in\mathbb{R}^N, &\end{array}\ri.		
\end{equation*}
fails to admit any nontrivial global solution under the conditions
$$1<q\leq \frac{n+1}{n-1},\quad\liminf_{R\to\infty}\int_{B_R(0)}u_1(x)dx\geq0.$$
Extensions to Riemannian manifolds have also been considered, such as in \cite{M-P-Sa}, where Monticelli-Punzo-Squassina treat the problem
\begin{equation*}\left\{\begin{array}{lll}
		u_{tt}(x,t)\geq\Delta u(x,t)+ v(x,t)|u(x,t)|^{q}, &(x,t)\in M\times(0,\infty),&\\[1ex]
		u(x,0)=u_0(x),&x\in M,&\\[1ex]
		u_t(x,0)=u_1(x),&x\in M, &\end{array}\ri.		
\end{equation*}
with $M$ a complete noncompact manifold, $\Delta$ the Laplace-Beltrami operator, $q>1$, and $v>0$ locally integrable. Under suitable curvature and volume growth assumptions, a nonexistence result for very weak solutions is established. In a more recent work \cite{M-P-S2}, Monticelli-Punzo-Somaglia extended the manifold setting to combinatorial graphs and derived analogous results. However, to the best of our knowledge, nonexistence results for semilinear hyperbolic equations on metric graphs remain largely unexplored.

In the study of semilinear parabolic and hyperbolic equations on metric graphs, a major technical difficulty arises from the singular behavior of the original distance function. In general, $d(\cdot,x_0)$ is not differentiable at certain points of the graph, which complicates the construction of test functions and the derivation of integral identities. Furthermore, when integration by parts is carried out edge by edge, additional boundary terms appear at the vertices. The treatment of these vertex contributions is often delicate and depends strongly on the underlying graph structure. 

The present paper may be viewed as a continuation and refinement of our previous work \cite{L-L-Z} on elliptic inequalities over metric graphs. In that paper, a mollification procedure was introduced to regularize the distance function and establish the corresponding nonexistence results. While the method was well suited to the elliptic setting, the regularized distance still retained certain singular features inherited from the original distance function. As a consequence, some technical difficulties persisted in the analysis. To address these issues, we develop here a new pseudo-distance construction, which is particularly adapted to the treatment of parabolic and hyperbolic inequalities and leads to a substantially simpler analytical framework. This construction yields a smooth replacement of the distance along the edges of the graph and avoids the singularities present in the original distance function. More importantly, it leads to an integration-by-parts formula in which the boundary contributions at the vertices disappear naturally, allowing the vertex and edge components of the graph to be handled within a unified framework. This feature makes the subsequent analysis applicable to both vertex-based and edge-based Laplacians and plays a crucial role in the derivation of the a priori estimates underlying the test function method.

Our approach allows us to establish nonexistence results for both nonnegative and sign-changing solutions to \eqref{s1:1} and \eqref{s1:2} under suitable space-time growth conditions on the potential $h$. Notably, our results recover previous Liouville-type theorems in the purely vertex-based setting \cite{M-P-S3,M-P-S2} and extend them to the metric graph context. Compared with the approaches in \cite{L-W,L-L-W,Wu}, our method is fundamentally different: for the heat equation \eqref{s1:1}, we do not rely on semigroup theory or heat kernel estimates, but instead develop a framework based on pseudo-distance and carefully constructed test functions, which allows a direct treatment of both vertex and edge contributions. This construction is central to proving the nonexistence results in both the parabolic and hyperbolic settings, particularly when dealing with sign-changing solutions.

The analysis of Problem \eqref{s1:1} in this paper covers two cases: nonnegative solutions (see Theorem \ref{s2:T3}) and general sign-changing solutions (see Theorem \ref{s2:T4}). For nonnegative solutions, the proof relies on a priori estimates derived by choosing suitable time-space coupled test functions. On the other hand, this approach cannot be applied to the case of general sign-changing solutions. For this purpose, we draw inspiration from \cite{M-P-S2} and use time-space separated test functions, which possess certain exponential decay properties with respect to the variable $x$ at infinity and compact support with respect to the variable $t$. Our a priori estimates rely on integration by parts formulas. The modified distance eliminates the outward normal contributions at the boundary, making them similar to vertex-based formulas. 

The derivative estimates for the test functions obtained in \cite{M-P-S,M-P-S2,M-P-S3} rely on a pseudo-metric defined on the vertex set and satisfying a collection of structural properties. Rather than providing an explicit construction of such a function, the authors assume that it enjoys several geometric and analytic properties, including the $q$-intrinsic condition and  a Laplacian estimate of the form \eqref{s1:3}. By contrast, in the present paper we explicitly construct a pseudo-distance on the whole metric graph. This construction allows us to verify the required geometric properties directly and provides a unified framework for the treatment of both vertices and edges. However, working on the entire metric graph also introduces new analytical difficulties, since the estimates for the test functions now depend on the behavior of the pseudo-distance inside the edges, requiring suitable bounds for its derivatives. Finally, by choosing appropriate classes of potentials, we obtain several corollaries. The analysis of Problem \eqref{s1:2} follows essentially the same line of argument as that of Problem \eqref{s1:1}. For this reason, in the proofs of Theorems \ref{s2:T1} and \ref{s2:T2} we focus only on the points that require additional arguments and omit the routine modifications.\\


The remaining parts of this paper are organized as follows: In Section \ref{2}, we state the assumptions on metric graphs and present the main results together with their corollaries. Section \ref{3} is devoted to the proofs for the parabolic equation \eqref{s1:1}, including the nonexistence of nonnegative solutions (Theorem \ref{s2:T3}) and sign-changing solutions (Theorem \ref{s2:T4}). In Section \ref{4}, we mainly consider Problem \eqref{s1:2} and obtain the corresponding results (Theorems \ref{s2:T1} and \ref{s2:T2}). Essential preliminaries for analysis on metric graphs are provided in the Appendix: Section \ref{A} introduces core basic notions and foundational definitions, Section \ref{B} presents two measures based on vertices and edges respectively, Section \ref{C} concerns functional spaces on metric graphs, and Section \ref{D} gives the definition of the Laplacian $\Delta_{\mathcal{G}}$.

\section{Statement of the main results}\label{2}

Throughout this paper, for any $x_0\in\mathcal{V}$ and $R>0$, we denote by \[ B_R(x_0):=\{x\in\mathcal{G}:d(x,x_0)<R\} \] the metric ball centered at $x_0$ with radius $R$. In what follows, we assume that the metric graph $\mathcal{G}$ satisfies the following conditions:
\begin{equation}\label{s3:1}
	\begin{cases}
		\text{ (i) }	\mathcal{G} \text{ is an infinite, connected, locally finite metric graph;}\\
		\text{ (ii) }  \text{the edge lengths are uniformly bounded, i.e., } j:=\sup_{e\in \mathcal{E}}l_e<\infty \text{ and } r:=\inf_{e\in\mathcal{E}}l_e>0;\\
		\text{ (iii) }	\text{there exists } x_0\in\mathcal{V} \text{ and a sequence } \{x_n\}\subset \mathcal{V}  \text{ such that } d(x_0, x_n) \text{ tends to } +\infty; \\
		\text{ (iv) }	\text{ for every } R>0, B_R(x_0) \text{ is finite, consisting of finitely many vertices and edges;} \\
		\text{ (v) }	\text{ there exists a constant } C > 0	\text{  such that for every } x\in\mathcal{V}, \sum_{y\sim x}\o(x,y)\leq C\mu_{\mathcal{V}}(x).
	\end{cases}
\end{equation}

\subsection{Definition of solutions}

For a function $f:\mathcal G\times[0,\infty)\to\mathbb R$, we use $f'$ to denote differentiation with respect to the spatial variable $x$, while $f_t$ and $f_{tt}$ denote the first-order and second-order derivatives with respect to the time variable $t$, respectively.  We begin with the notion of very weak solutions to the parabolic problem \eqref{s1:1}.
\begin{definition}\label{s2:D1}
	We say that $u:\mathcal G\times[0,\infty)\to\mathbb R$ is a very weak solution to \eqref{s1:1} if the following conditions hold:
	\begin{itemize}
		\item  for every $x\in\mathcal{G}$,  $u(x,\cdot)\in L_{\text{loc}}^1([0,\infty))\cap L^\sigma_{\text{loc}}([0,\infty),h(x,\cdot)dt)$;
		\item for every $t\in[0,\infty)$, $u(\cdot,t)\in\mathcal{D}(\mathcal{G})$, and $[\mathcal{K}(u)](x,t)=0$ for all $x\in\mathcal{V}$, where $\mathcal{D}(\mathcal{G})$ and $[\mathcal{K}(u)](\cdot,t)$ are given in \eqref{s2:2} and \eqref{s2:4}, respectively;
		\item for every nonnegative function $\varphi: \mathcal{G} \times [0,\infty) \to \mathbb{R}$ satisfying $\text{supp}(\varphi) \subset A \times [0,T]$ for some $T > 0$ and compact set $A \subset \mathcal{G}$, with $\varphi(x,\cdot) \in C^1([0,\infty))$ for all $x \in \mathcal{G}$, the inequality
		\begin{align}\label{s2:9}
			\nonumber\int_0^\infty &\int_{\mathcal{G}}\varphi(x,t)\Delta_{\mathcal{G}} u(x,t)d\mu_{\mathcal{G}}dt+	\int_0^\infty \int_{\mathcal{G}}h(x,t)|u(x,t)|^\sigma\varphi(x,t)d\mu_{\mathcal{G}} dt\\
			&+\int_0^\infty \int_{\mathcal{G}}u(x,t)\varphi_t(x,t)d\mu_{\mathcal{G}}dt+\int_\mathcal{G}u_0(x)\varphi(x,0)d\mu_{\mathcal{G}}\leq0
		\end{align} 
		holds, where the integral over $\mathcal{G}$ is defined by \eqref{s2:22}.
	\end{itemize}
\end{definition}
The condition $u(\cdot,t)\in\mathcal D(\mathcal G)$ ensures that the Laplacian $\Delta_{\mathcal G}u$ is well defined. The corresponding notion for \eqref{s1:2} is defined analogously. More precisely, we assume that the first two conditions in Definition \ref{s2:D1} are satisfied and that,  for every nonnegative test function $\varphi: \mathcal{G} \times [0,\infty) \to \mathbb{R}$ with $\text{supp}(\varphi) \subset A \times [0,T]$, for some $T > 0$ and compact set $A \subset \mathcal{G}$, and such that $\varphi(x,\cdot) \in C^1([0,\infty))$ for all $x \in \mathcal{G}$, the following inequality holds:
\begin{align}\label{s2:10}
	\nonumber-\int_0^\infty \int_{\mathcal{G}}\varphi_{tt}(x,t) u(x,t)d\mu_{\mathcal{G}}dt&+\int_0^\infty \int_{\mathcal{G}}\varphi(x,t)\Delta_{\mathcal{G}} u(x,t)d\mu_{\mathcal{G}}dt+	\int_0^\infty \int_{\mathcal{G}}h(x,t)|u(x,t)|^\sigma\varphi(x,t)d\mu_{\mathcal{G}} dt\\
	&+\int_\mathcal{G}u_1(x)\varphi(x,0)d\mu_{\mathcal{G}}-\int_\mathcal{G}u_0(x)\varphi_t(x,0)d\mu_{\mathcal{G}}\leq0.
\end{align}

	\subsection{Parabolic problem}
	
We first establish Liouville-type nonexistence results for very weak solutions to \eqref{s1:1} under suitable weighted space-time volume growth assumptions on the potential $h$. Throughout this paper, we set $ R_0:=\max\{2j,1\}$.

\begin{theorem}\label{s2:T3}
	Let $\mathcal{G}$ satisfy \eqref{s3:1}. Assume that the potential $h:\mathcal G\times[0,\infty)\to\mathbb R$ is positive, $\sigma>1$, and 
	\[
	\theta_1\ge2,\quad
	\theta_2\ge1,\quad
	\theta_1\ge\theta_2.
	\] Suppose that there exists a constant $C>0$ such that for every $R\geq R_0$
	\begin{equation}\label{s3:10}
		\int_0^\infty \left\{\sum_{x\in\mathcal{V}}\mu_{\mathcal{V}}(x) h^{-\frac{1}{\sigma-1}}(x,t)\textbf{1}_{E_R}(x,t)\right\}dt\leq CR^{\frac{\sigma}{\sigma-1}},\quad\int_0^\infty\left\{\sum_{e\in \mathcal{E}}\int_{0}^{l_e}h_e^{-\frac{1}{\sigma-1}}(x,t)\textbf{1}_{E_R}(x,t)dx\right\}dt\leq CR^{\frac{\sigma}{\sigma-1}},
	\end{equation} 
	where 
	$$E_R:=\le\{(x,t)\in\mathcal{G}\times[0,\infty):R^{\theta_1}\leq d(x,x_0)^{\theta_1}+t^{\theta_2}\leq 2R^{\theta_1}\ri\}.$$
	Then any nonnegative very weak solution $u:\mathcal{G}\times[0,\infty)\to\mathbb{R}$ to  \eqref{s1:1}  satisfies \[ u\equiv0, \quad\text{on }\mathcal{G}\times[0,\infty). \]
\end{theorem}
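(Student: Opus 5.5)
We argue by the test function method, using a space-time coupled cutoff built from the pseudo-distance $\rho$ constructed on $\mathcal{G}$ (smooth along edges, comparable to $d(\cdot,x_0)$ up to additive constants of order $j$, with vanishing outward normal derivatives at the vertices so that no vertex boundary terms appear in the integration by parts). Fix $\phi\in C^\infty([0,\infty))$ with $0\le\phi\le1$, $\phi\equiv1$ on $[0,1]$, $\phi\equiv0$ on $[2,\infty)$, and for $R\ge R_0$ set
\[
\varphi_R(x,t)=\phi^{s}\!\left(\frac{\rho(x)^{\theta_1}+t^{\theta_2}}{R^{\theta_1}}\right),
\]
with $s$ large (e.g. $s\ge 2\sigma/(\sigma-1)$). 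This is nonnegative, $C^1$ in $t$, compactly supported in $B_{cR}(x_0)\times[0,(2R^{\theta_1})^{1/\theta_2}]$ by \eqref{s3:1}(iv), and hence admissible in \eqref{s2:9}. By the integration by parts formula (where $[\mathcal{K}(u)]=0$ at vertices and the vanishing of $\rho'$ at vertices kill the boundary terms), $\int_{\mathcal{G}}\varphi\Delta_{\mathcal{G}}u\,d\mu_{\mathcal{G}}=\int_{\mathcal{G}}u\Delta_{\mathcal{G}}\varphi\,d\mu_{\mathcal{G}}$. Since $u_0\ge0$ (nonnegativity of the solution at $t=0$) the initial term can be dropped, and \eqref{s2:9} gives
\[
\int_0^\infty\!\!\int_{\mathcal{G}}h u^\sigma\varphi_R\,d\mu_{\mathcal{G}}dt\le\int_0^\infty\!\!\int_{\mathcal{G}}u\,\bigl(|\Delta_{\mathcal{G}}\varphi_R|+|\partial_t\varphi_R|\bigr)\,d\mu_{\mathcal{G}}dt .
\]

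Next we bound the derivatives of $\varphi_R$, which are supported in $E_R$ (up to replacing $d$ by $\rho$, harmless since they are comparable for $R\ge R_0$). Writing $\eta=(\rho^{\theta_1}+t^{\theta_2})/R^{\theta_1}$: $|\partial_t\varphi_R|\le C\phi^{s-1}\theta_2t^{\theta_2-1}R^{-\theta_1}\le C R^{-\theta_1/\theta_2}\varphi_R^{(s-1)/s}$, and $\theta_1/\theta_2\ge1$ gives $\le CR^{-1}\varphi_R^{(s-1)/s}$. For the Laplacian, the edge part involves $\varphi''$, i.e. $(\rho^{\theta_1})''/R^{\theta_1}$ and $((\rho^{\theta_1})')^2/R^{2\theta_1}$; using bounded $|\rho'|,|\rho''|$ on edges and $\rho\le CR$ on the support, these are $\le C(R^{-1}+R^{-2})\varphi_R^{(s-2)/s}$, using $\theta_1\ge2$ (so that $\rho^{\theta_1-2}$ is controlled). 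The vertex part $\frac1{\mu_{\mathcal V}(x)}\sum_{y\sim x}\omega(x,y)(\varphi_R(y)-\varphi_R(x))$ is handled by the mean value theorem, $|\rho(y)^{\theta_1}-\rho(x)^{\theta_1}|\le C jR^{\theta_1-1}$, together with \eqref{s3:1}(v), giving again $\le CR^{-1}\varphi_R^{(s-1)/s}$ on neighbours of $E_R$. So altogether $|\Delta_{\mathcal G}\varphi_R|+|\partial_t\varphi_R|\le CR^{-1}\varphi_R^{(s-2)/s}\mathbf 1_{E_R}$ (possibly with a slightly enlarged annulus, which the hypothesis still covers after rescaling $R$). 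This derivative bound, especially controlling the vertex differences and the enlarged support uniformly, is where I expect the main technical work.

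Finally apply Young's inequality with exponents $\sigma,\sigma'=\sigma/(\sigma-1)$:
\[
u\,R^{-1}\varphi_R^{(s-2)/s}\le\tfrac12 h u^\sigma\varphi_R+C h^{-\frac1{\sigma-1}}R^{-\sigma'}\varphi_R^{\,(s-2)\sigma'/s-\sigma'/\sigma}\mathbf 1_{E_R},
\]
the power of $\varphi_R$ being nonnegative for $s$ large. Absorbing and splitting $d\mu_{\mathcal G}$ into its vertex and edge components, hypothesis \eqref{s3:10} yields
\[
\int_0^\infty\!\!\int_{\mathcal G}hu^\sigma\varphi_R\le CR^{-\sigma'}R^{\sigma'}=C
\]
uniformly in $R$. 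Letting $R\to\infty$ (monotone convergence, $\varphi_R\to1$) gives $hu^\sigma\in L^1(\mathcal G\times(0,\infty))$. Then, rather than Young, use H\"older on the right side restricted to $E_R$: it is bounded by $C\bigl(\iint_{E_R}hu^\sigma\varphi_R\bigr)^{1/\sigma}\cdot\bigl(R^{-\sigma'}\cdot CR^{\sigma'}\bigr)^{1/\sigma'}$, and the first factor tends to $0$ as $R\to\infty$ since it is a tail of an integrable function. Hence $\iint hu^\sigma=0$, and positivity of $h$ forces $u\equiv0$.
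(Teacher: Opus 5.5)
Your architecture matches the paper's: the same coupled test function $\tau^s$ built from $\tilde d$, the same integration by parts with no vertex terms (Lemma \ref{s5:L5}), Young to get $hu^\sigma\in L^1$, then H\"older plus a vanishing tail. Your edge bound $|(\varphi_R)_e''|\le CR^{-1}\varphi_R^{(s-2)/s}$ and your bound on $\partial_t\varphi_R$ are correct pointwise, because they are local.

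The step that fails is the vertex part of your two-sided estimate $|\Delta_{\mathcal V}\varphi_R(x,t)|\le CR^{-1}\varphi_R(x,t)^{(s-1)/s}$. The vertex Laplacian is nonlocal. The mean value theorem gives $\phi^{s-1}$ at a point between $\eta(x)$ and $\eta(y)$, and near the outer edge of the support this is not comparable to $\phi^{s-1}(\eta(x))$. For example, take a vertex $x$ with $\varphi_R(x,t)=0$ adjacent to some $y$ with $\varphi_R(y,t)>0$. Then $\Delta_{\mathcal V}\varphi_R(x,t)=\mu_{\mathcal V}(x)^{-1}\sum_{y\sim x}\omega(x,y)\varphi_R(y,t)>0$, while your right-hand side is $0$; the ratio is also unbounded for $\eta(x)$ just inside the support. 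Without a power of $\varphi_R$ evaluated at $x$ itself, the term $\mu_{\mathcal V}(x)u(x,t)|\Delta_{\mathcal V}\varphi_R(x,t)|$ cannot be absorbed by Young into $\mu_{\mathcal V}(x)hu^\sigma\varphi_R(x,t)$. An unweighted H\"older bound does not close either, since $hu^\sigma\in L^1$ is not yet known. So, as written, the a priori bound does not follow.

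The missing idea is to use $u\ge0$ and settle for a one-sided bound, as the paper does in \eqref{s4:15}--\eqref{s5:11}. You only need an upper bound on $-\Delta_{\mathcal V}(\tau^s)$. Convexity of $r\mapsto r^s$ on $[0,1]$, i.e. $a^s-b^s\ge sb^{s-1}(a-b)$, gives
\[
-\Delta_{\mathcal V}(\tau^s)(x,t)\le -s\,\tau^{s-1}(x,t)\,\Delta_{\mathcal V}\tau(x,t)\le \frac{C}{R}\,\tau^{s-1}(x,t)\,\mathbf 1_{M_R}(x,t).
\]
The last inequality is just your mean-value estimate $|\Delta_{\mathcal V}\tau|\le C/R$, with no multiplicative factor. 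It is combined with the observation that $\tau(x,t)>0$ and $\Delta_{\mathcal V}\tau(x,t)\neq0$ force $(x,t)$ into the enlarged annulus.

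Now the weight sits at $x$, and your Young and H\"older steps go through unchanged. Your choice $s\ge 2\sigma/(\sigma-1)$ covers both the $\tau^{s-1}$ vertex factor and your $\tau^{s-2}$ edge factor. Your ``rescaling $R$'' step, passing from the $\tilde d$-annulus to the hypothesis on $E_R$, is made precise by Lemma \ref{s4:L1} together with the finite covering \eqref{s5:7}.

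This nonlocality issue is also why the paper, for sign-changing solutions (Theorem \ref{s2:T4}), switches to the non-vanishing exponential profile $\psi$, whose values at neighbouring vertices are comparable.
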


Next, we present several direct consequences of Theorem \ref{s2:T3} corresponding to some typical classes of potentials.
\begin{corollary}\label{s2:C1}
	Let $\mathcal{G}$ satisfy \eqref{s3:1}, let $\sigma>1$, and let $ h:\mathcal{G}\times[0,\infty)\to\mathbb{R} $ be a positive function. Assume that $ u:\mathcal{G}\times[0,\infty)\to\mathbb{R} $ is a nonnegative very weak solution to \eqref{s1:1}. Suppose that one of the following assumptions holds.
	\begin{itemize}
		\item The potential satisfies $h(x,t)\geq f(t)g(x)$ for all $(x,t)\in \mathcal{G}\times[0,\infty)$, for some positive functions $f$ and $g$. Moreover, assume that there exist constants $C>0$, $T_0>0$, and $\xi_1,\xi_2\geq0$ such that for every $R\geq R_0$ and $T\geq T_0$, 		\begin{equation}\label{s3:9}
			\int_0^Tf^{-\frac{1}{\sigma-1}}(t)dt\leq CT^{\xi_1},\quad\sum_{x\in\mathcal{V} \cap B_{R}}\mu_{\mathcal{V}}(x)g^{-\frac{1}{\sigma-1}}(x)\leq CR^{\xi_2},\quad \sum_{e\in \mathcal{E}\cap B_R}\int_{0}^{l_e}g_e^{-\frac{1}{\sigma-1}}(x)dx\leq CR^{\xi_2},
		\end{equation} 
		where
		$$\xi_1+\xi_2\leq \frac{\sigma}{\sigma-1}.$$
		
		\item The potential satisfies $h(x,t)\geq g(x)$ for all $(x,t)\in \mathcal{G}\times[0,\infty)$, for some positive function $g$. Assume further that for every $R\geq R_0$,
		\begin{equation}\label{s3:14}
			\sum_{x\in\mathcal{V} \cap B_{R}}\mu_{\mathcal{V}}(x)g^{-\frac{1}{\sigma-1}}(x)\leq CR^{\frac{1}{\sigma-1}},\quad\sum_{e\in \mathcal{E}\cap B_R}\int_{0}^{l_e}g_e^{-\frac{1}{\sigma-1}}(x)dx\leq CR^{\frac{1}{\sigma-1}}.
		\end{equation} 	
	\end{itemize}
	Hence, \eqref{s1:1} admits no nontrivial nonnegative very weak solutions.
\end{corollary}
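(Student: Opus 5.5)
The plan is to deduce Corollary \ref{s2:C1} directly from Theorem \ref{s2:T3}. In each of the two cases we choose admissible exponents $\theta_1,\theta_2$ and verify the growth condition \eqref{s3:10} for every $R\ge R_0$. In both cases we take $\theta_1=\theta_2=2$, which is allowed since $\theta_1\ge2$, $\theta_2\ge1$ and $\theta_1\ge\theta_2$. The first step is to localize $E_R$. If $(x,t)\in E_R$, then $d(x,x_0)^2\le 2R^2$ and $t^2\le 2R^2$. Hence
\[
E_R\subset \overline{B_{\sqrt2 R}(x_0)}\times[0,\sqrt2 R]\subset B_{2R}(x_0)\times[0,\sqrt2R].
\]
This choice $\theta_1=\theta_2$ is essential: it makes the time window of $E_R$ of length comparable to $R$. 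A choice such as $\theta_2=1$ would give a window of length about $R^{\theta_1}\ge R^2$, which is too large for the second case.

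\emph{First case.} Since $h\ge fg>0$ and $-\frac1{\sigma-1}<0$, we have $h^{-\frac1{\sigma-1}}\le f^{-\frac1{\sigma-1}}g^{-\frac1{\sigma-1}}$ pointwise, on vertices and on each edge. Replacing $\mathbf 1_{E_R}$ by the indicator of the box above, the vertex term in \eqref{s3:10} is bounded by
\[
\Big(\int_0^{\sqrt2R}f^{-\frac1{\sigma-1}}dt\Big)\Big(\sum_{x\in\mathcal V\cap B_{2R}}\mu_{\mathcal V}(x)g^{-\frac1{\sigma-1}}(x)\Big).
\]
The edge term is bounded in the same way, with the edge sum of \eqref{s3:9}. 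To use \eqref{s3:9} when $\sqrt2R<T_0$, we enlarge the upper limit of the time integral to $T:=\max\{T_0,\sqrt2R\}$, since the integrand is positive. Because $R\ge R_0\ge1$, we have $T\le \max\{T_0,\sqrt2\}R$, so $T^{\xi_1}\le C'R^{\xi_1}$. The space factor is at most $C(2R)^{\xi_2}$. Altogether the bound is $C''R^{\xi_1+\xi_2}\le C''R^{\frac{\sigma}{\sigma-1}}$, using again $R\ge1$ and $\xi_1+\xi_2\le\frac\sigma{\sigma-1}$.

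\emph{Second case.} Now $h\ge g$, so $h^{-\frac1{\sigma-1}}\le g^{-\frac1{\sigma-1}}$, and the time integral only contributes the length $\sqrt2R$ of the window. Using \eqref{s3:14} with radius $2R$, both terms of \eqref{s3:10} are bounded by
\[
\sqrt2R\cdot C(2R)^{\frac1{\sigma-1}}\le C'R^{1+\frac1{\sigma-1}}=C'R^{\frac{\sigma}{\sigma-1}}.
\]

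In both cases \eqref{s3:10} holds, so Theorem \ref{s2:T3} gives $u\equiv0$, which proves the corollary.

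The only delicate point is how to read the edge sums $\sum_{e\in\mathcal E\cap B_R}\int_0^{l_e}$ in \eqref{s3:9} and \eqref{s3:14}. These hypotheses control edges lying in $B_R$, whereas the edge points in $E_R$ belong to edges meeting $B_{\sqrt2R}$, and such edges may stick out of that ball by at most $j$. Since $R\ge R_0\ge 2j$, we have $\sqrt2R+j\le 2R$, so all these edges lie in $B_{2R}$. Applying the hypothesis at radius $2R$ therefore costs only a constant factor $2^{\xi_2}$ or $2^{\frac1{\sigma-1}}$, which is absorbed into $C$.
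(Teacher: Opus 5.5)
Your proposal is correct and follows essentially the same route as the paper: take $\theta_1=\theta_2=2$, place $E_R$ inside the box $B_{\sqrt2R}\times[0,\sqrt2R]$, bound $h^{-\frac{1}{\sigma-1}}$ by $f^{-\frac{1}{\sigma-1}}g^{-\frac{1}{\sigma-1}}$, and invoke Theorem \ref{s2:T3}; the paper obtains the second case as the special case $f\equiv1$, $\xi_1=1$, $\xi_2=\frac{1}{\sigma-1}$ of the first, which is the same computation you carry out directly. Your extra care in enlarging the time interval to handle $T_0$, and in passing to radius $2R$ (using $R\ge 2j$) to cover edges that protrude from the ball of radius $\sqrt2R$, fills in small points the paper's proof passes over.
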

As an immediate application of Corollary \ref{s2:C1}, we obtain the following result for the autonomous equation with constant potential.
\begin{corollary}\label{s2:C4}
	Let $\mathcal{G}$ satisfy \eqref{s3:1}, and let $\sigma>1$.  If $u$ is a nonnegative very weak solution to  
	\begin{equation*}\left\{\begin{array}{lll}
			u_t(x,t)\geq\Delta_{\mathcal{G}} u(x,t)+u^{\sigma}(x,t), &(x,t)\in\mathcal{G}\times(0,\infty),&\\[1ex]
			u(x,0)=u_0(x),&x\in\mathcal{G}.&\end{array}\ri.		
	\end{equation*} 
	Suppose that there exists a constant $C>0$ such that for every $R\geq R_0$,
	\begin{equation*}
		\mu_\mathcal{V}(B_R)\leq CR^{\frac{1}{\sigma-1}},\quad	\mu_\mathcal{E}(B_R)\leq CR^{\frac{1}{\sigma-1}}.
	\end{equation*} 
	Consequently, every such solution is trivial.
\end{corollary}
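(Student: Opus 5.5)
The plan is to reduce Corollary \ref{s2:C4} to the second alternative of Corollary \ref{s2:C1}, applied with the constant potential $h\equiv 1$. For a nonnegative solution we have $|u|^\sigma=u^\sigma$. Hence $u$ is a nonnegative very weak solution of \eqref{s1:1} with $h(x,t)=1$, which is positive, and the inequality \eqref{s2:9} holds verbatim.

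Next I choose $g\equiv1$ on $\mathcal{G}$, so that $h(x,t)\geq g(x)$ holds trivially and $g^{-\frac{1}{\sigma-1}}\equiv1$. The two sums in \eqref{s3:14} then become
\[
\sum_{x\in\mathcal{V}\cap B_R}\mu_{\mathcal{V}}(x)=\mu_{\mathcal{V}}(B_R),\qquad \sum_{e\in\mathcal{E}\cap B_R}\int_0^{l_e}1\,dx=\mu_{\mathcal{E}}(B_R).
\]
The only point to check is that the edge-measure sum in \eqref{s3:14} coincides with, or is bounded by a constant times, $\mu_{\mathcal{E}}(B_R)$ as defined in Appendix \ref{B}. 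Since $\mu_{\mathcal{E}}$ is Lebesgue measure along edges, the edge sum over edges meeting $B_R$ may slightly exceed $\mu_{\mathcal{E}}(B_R)$ when edges are only partially contained in the ball. Each such edge lies in $B_{R+j}$, so the sum is at most $\mu_{\mathcal{E}}(B_{R+j})$. Using the hypothesis at radius $R+j\le 2R$ (recall $R\ge R_0\ge 2j$), this is bounded by $C(2R)^{\frac{1}{\sigma-1}}=C'R^{\frac{1}{\sigma-1}}$. The same enlargement handles the vertex sum if the convention requires it.

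With \eqref{s3:14} verified for all $R\ge R_0$, Corollary \ref{s2:C1} applies and gives $u\equiv0$. The only potential obstacle is this bookkeeping between ``$\mathcal{E}\cap B_R$'' and the measure of the ball, and it is absorbed by the constant as shown above.
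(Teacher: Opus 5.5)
Your proposal is correct and matches the paper's proof: the paper also obtains this corollary by applying part (ii) of Corollary \ref{s2:C1} with $h\equiv1$ and $g\equiv1$. Your extra check, bounding the sum over $\mathcal{E}\cap B_R$ by $\mu_{\mathcal{E}}(B_{R+j})\le C(2R)^{\frac{1}{\sigma-1}}$ using $R+j\le 2R$ for $R\ge R_0$, is valid bookkeeping that the paper leaves implicit.
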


\begin{remark}
	\begin{enumerate}
		\item On metric graphs, the geometric structure naturally consists of two different components: the discrete set of vertices and the one-dimensional edge segments connecting them. 
		Consequently, volume estimates on $\mathcal G$ involve both the discrete measure $\mu_{\mathcal V}$ and the one-dimensional edge measure $\mu_{\mathcal E}$. 
		For this reason, throughout the paper the weighted volume growth assumptions are stated separately on vertices and edges.
		\item It is worth noting that condition \eqref{s3:10} is only used for sufficiently large values of $R$ in the proof of Theorem \ref{s2:T3}. More precisely, the inclusion $\tilde E_R\subset M_R$,
		established in Lemma \ref{s4:L1}, is required only when 
		$$R\ge \max\{4j,1\}.$$ 
		Hence the assumption may be imposed only for all sufficiently large $R$. The same conclusion applies to Corollaries \ref{s2:C1} and \ref{s2:C4}.
	\end{enumerate}
\end{remark}

The restriction to nonnegative solutions in the above results is mainly technical, as it allows one to apply comparison and integral estimates. The later theorems relax this restriction by working in suitable weighted spaces with exponential decay. For later use, given $\delta>0$ and $R\geq R_0$, we introduce the weighted Lebesgue space 
\begin{equation}\label{s3:16} 
	X_\alpha := L^1_{e^{-\alpha d(x,x_0)}}(\mathcal{G}), \qquad \alpha=\frac{\delta}{R},
\end{equation} 
where $ L^1_{e^{-\alpha d(x,x_0)}}(\mathcal{G}) $ is defined in \eqref{s2:13}.

The proof of Theorem \ref{s2:T3} relies on suitable compactly supported space-time coupled test functions. However, in the study of sign-changing solutions, the above argument is no longer applicable. Instead, we employ a different class of test functions which are separated in the space and time variables. In particular, these test functions are globally supported with respect to the spatial variable and exhibit suitable exponential decay at infinity. This highlights the delicate interplay between the geometry of the metric graph and the volume growth of the potential. As a consequence, additional assumptions are required. More precisely, besides stronger weighted space-time volume growth conditions on the potential $h$, we also need suitable assumptions on the initial datum and on the weighted integrability of solutions in the space $ L^1_{\mathrm{loc}}([0,\infty);X_\alpha)$. We are now in a position to state the corresponding nonexistence result for sign-changing solutions to \eqref{s1:1}.

\begin{theorem}\label{s2:T4}
	Let $\mathcal{G}$ satisfy \eqref{s3:1}, and let $h: \mathcal{G}\times[0,\infty)\ra\mathbb{R}$ be a positive function such that for every $R\geq R_0$,
	\begin{equation}\label{s3:15}
		\int_R^{2R} \le\{\sum_{x\in \mathcal{V}\cap B_{R}}\mu_{\mathcal{V}}(x) h^{-\frac{1}{\sigma-1}}(x,t) e^{-\alpha d(x,x_0)} \ri\}dt\leq CR^{\frac{\sigma}{\sigma-1}},	\int_R^{2R}\le\{\sum_{e\in \mathcal{E}\cap B_{R}}\int_{0}^{l_e}	h_e^{-\frac{1}{\sigma-1}}(x,t)e^{-\alpha d(x,x_0)}dx \ri\}dt\leq CR^{\frac{\sigma}{\sigma-1}},
	\end{equation}
	and
	\begin{equation}\label{s3:11}
		\int_0^{2R} \le\{\sum_{x\in\mathcal{V}\cap B_{R}^c}\mu_{\mathcal{V}}(x) h^{-\frac{1}{\sigma-1}}(x,t) e^{-\alpha d(x,x_0)}\ri\}dt\leq CR^{\frac{\sigma}{\sigma-1}},\int_0^{2R}\le\{\sum_{e\in\mathcal{E}\cap B_{R}^c}\int_{0}^{l_e}	h_e^{-\frac{1}{\sigma-1}}(x,t)e^{-\alpha d(x,x_0)}dx \ri\}dt\leq CR^{\frac{\sigma}{\sigma-1}}.
	\end{equation}
    Assume that $u$ is a very weak solution to \eqref{s1:1} with $\sigma>1$ satisfying $u\in L^1_{\text{loc}}([0,\infty); X_\alpha)$, and suppose that the initial datum $u_0\in X_\alpha$ satisfies 
	\begin{equation}\label{s3:12}
		\int_{\mathcal{G}}u_0(x)d\mu_\mathcal{G}\geq0.
	\end{equation}
	Then the corresponding problem admits no nontrivial solution.
\end{theorem}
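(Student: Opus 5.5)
We follow the strategy of \cite{M-P-S2}: a test function that is separated in space and time, with exponential decay in space and compact support in time. Fix $R\ge R_0$ and $\delta>0$ small, set $\alpha=\delta/R$, and let $\rho$ be the smooth pseudo-distance on $\mathcal G$ constructed in the paper. It is comparable to $d(\cdot,x_0)$, smooth along edges with bounded first and second derivatives, and has vanishing normal contributions at vertices. Take
\[
\varphi(x,t)=e^{-\alpha\rho(x)}\,\eta_R(t),\qquad \eta_R(t)=\eta(t/R)^{k},
\]
where $\eta\in C^\infty([0,\infty))$, $0\le\eta\le1$, $\eta\equiv1$ on $[0,1]$, $\eta\equiv0$ on $[2,\infty)$, and $k$ is large. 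This $\varphi$ is not compactly supported in $x$. We therefore first test \eqref{s2:9} with $\varphi\,\psi_n(x)$, where $\psi_n$ is a spatial cutoff, and pass $n\to\infty$. The assumptions $u\in L^1_{\rm loc}([0,\infty);X_\alpha)$ and $u_0\in X_\alpha$ justify this limit by dominated convergence, after moving $\Delta_{\mathcal G}$ onto the test function. The integration by parts formula for $\rho$ together with $[\mathcal K(u)]=0$ removes the vertex boundary terms. The result is
\[
\int_0^\infty\!\!\int_{\mathcal G} h|u|^\sigma\varphi+\int_{\mathcal G}u_0 e^{-\alpha\rho}
\le \int_0^\infty\!\!\int_{\mathcal G}|u|\,\big(|\Delta_{\mathcal G}\varphi|+|\varphi_t|\big).
\]

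Next we estimate the right-hand side pointwise. On edges, $(e^{-\alpha\rho})''=(\alpha^2\rho'^2-\alpha\rho'')e^{-\alpha\rho}$. On vertices, the vertex Laplacian of $e^{-\alpha\rho}$ is controlled by $\alpha\,e^{-\alpha\rho}$ up to constants. This uses \eqref{s3:1}(v), the bound $|\rho(y)-\rho(x)|\le Cj$ for $y\sim x$, and the inequality $|e^{-\alpha a}-e^{-\alpha b}|\le\alpha|a-b|e^{-\alpha\min}$ with $e^{\alpha Cj}\le C$ since $\alpha\le\delta$. Together this gives
\[
|\Delta_{\mathcal G}\varphi|\le C\big(\alpha+\alpha^2+|\rho''|\alpha\big)\varphi\le \tfrac{C}{R}\,e^{-\alpha\rho}\eta_R .
\]
The time part is $|\varphi_t|\le \tfrac{C}{R}\eta(t/R)^{k-1}e^{-\alpha\rho}\mathbf 1_{[R,2R]}(t)$.

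Now we absorb these terms using Young's inequality with weight $h$:
\[
|u|\,|\Delta_{\mathcal G}\varphi|\le \varepsilon h|u|^\sigma\varphi+C_\varepsilon h^{-\frac1{\sigma-1}}\,|\Delta_{\mathcal G}\varphi|^{\sigma'}\varphi^{1-\sigma'},
\]
and similarly for $\varphi_t$. The choice of $k$ guarantees $\eta^{k-\sigma'}$ stays bounded. The remaining quantities are
\[
R^{-\sigma'}\int_0^{2R}\!\!\int_{\mathcal G} h^{-\frac1{\sigma-1}}e^{-\alpha\rho}\qquad\text{and}\qquad R^{-\sigma'}\int_R^{2R}\!\!\int_{\mathcal G} h^{-\frac1{\sigma-1}}e^{-\alpha\rho}.
\]
We split space into $B_R$ and $B_R^c$, and use $e^{-\alpha\rho}\le Ce^{-\alpha d}$. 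The hypotheses \eqref{s3:15} and \eqref{s3:11}, for the vertex and edge parts separately, bound these by $C$.

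The main obstacle is the spatial Laplacian term on $B_R\times[0,R]$, where $\eta_R\equiv1$. Hypothesis \eqref{s3:15} only covers $t\in[R,2R]$ on $B_R$, so we need the factor $\alpha^{\sigma'}$ to beat the growth there. We plan to keep the precise bound $|\Delta_{\mathcal G}\varphi|\le C\alpha\varphi=C\delta R^{-1}\varphi$ and either use \eqref{s3:11}-type control after enlarging the region, or exploit the smallness $\delta^{\sigma'}$ against a quantity that \eqref{s3:15} and \eqref{s3:11} bound uniformly. If this fails, we would shift $\eta$ to vanish near $0$ (paying an initial-data term) so that only $[R,2R]$ appears.

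Granting this, we obtain
\[
\tfrac12\int\!\!\int h|u|^\sigma\varphi+\int u_0e^{-\alpha\rho}\le C.
\]
Letting $R\to\infty$ and using \eqref{s3:12} gives $\int_0^\infty\!\int_{\mathcal G} h|u|^\sigma<\infty$. Applying Hölder's inequality instead of Young's bounds the right-hand side by $C\big(\int\!\!\int_{\{R\le t\le 2R\}\cup B_R^c} h|u|^\sigma\varphi\big)^{1/\sigma}$, which tends to $0$ by integrability. Hence $\int\!\!\int h|u|^\sigma=0$, and therefore $u\equiv0$.
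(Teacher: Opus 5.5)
There is a genuine gap, exactly at the step you call the main obstacle, and none of your proposed remedies closes it.

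\textbf{Why the bulk term is uncontrolled.} With the pure exponential profile, $\Delta_{\mathcal G}\varphi$ has size $\frac{C\delta}{R}e^{-\alpha\rho}\eta_R$ on all of $\mathcal G\times[0,2R]$. This includes the bulk $B_R\times[0,R]$. Neither hypothesis controls $\int_0^R$ of the vertex or edge integral of $h^{-\frac{1}{\sigma-1}}$ over $B_R$: \eqref{s3:15} only sees $B_R\times[R,2R]$, and \eqref{s3:11} only sees $B_R^c\times[0,2R]$. In fact no hypothesis at any scale $R\geq R_0$ says anything about $h$ on $B_{R_0}\times[0,R_0]$. So $h^{-\frac{1}{\sigma-1}}$ need not be integrable there, and the quantity $R^{-\sigma/(\sigma-1)}\int_0^{2R}\int_{\mathcal G}h^{-\frac{1}{\sigma-1}}e^{-\alpha\rho}$ produced by your Young step can be $+\infty$.

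\textbf{Why your remedies fail.} The small factor $\delta^{\sigma/(\sigma-1)}$ has nothing finite to multiply. Enlarging the region in \eqref{s3:11} still only covers $B_R^c$. Making $\eta_R$ vanish near $t=0$ does confine the errors to $t\in[R,2R]$, but then $\varphi(\cdot,0)=0$ and the $u_0$ term disappears, and that term is the only place \eqref{s3:12} enters. You would then only get a bound on $\int_R^{2R}\int_{\mathcal G}h|u|^\sigma\varphi$ for each window. That gives neither global integrability nor vanishing, and nothing for $t<R_0$.

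\textbf{The final step fails too.} For your $\varphi$ the error terms are not supported in $\{R\le t\le 2R\}\cup B_R^c$. The bulk contributes $\frac{C\delta}{R}\int_0^R\int_{B_R}|u|e^{-\alpha\rho}$. A single H\"older estimate over $B_R\times[0,R]$ leaves a factor $\bigl(\int_0^R\int_{B_R}h|u|^\sigma\bigr)^{1/\sigma}$, which tends to $\bigl(\int_0^\infty\int_{\mathcal G}h|u|^\sigma\bigr)^{1/\sigma}$ rather than to $0$. It is also multiplied by a volume factor that the hypotheses do not bound.

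\textbf{The missing idea: make the spatial factor flat on the ball.} The paper takes $\Phi(x,t)=\phi^s(t/R)\,\psi\bigl((\tilde d(x,x_0)-j)/R\bigr)$, with $\psi\equiv1$ on $[-j/R,1]$ and $\psi(r)=e^{-\delta r}$ only for $r\geq2$.
\begin{itemize}
\item Because of the shift by $j$, every vertex $x\in B_R$ and every $y\sim x$ satisfy $(d(y,x_0)-j)/R\leq 1$. Hence $\Delta_{\mathcal V}\Phi$ vanishes on $B_R$.
\item By \eqref{s4:16}, $\Phi_e''$ is supported in $\tilde B_{R+j}^c\subset B_R^c$ (Lemma \ref{s5:L3}).
\item $\Phi_t$ lives on $[R,2R]$.
\end{itemize}
All error terms then sit in $(B_R^c\times[0,2R])\cup(B_R\times[R,2R])$, which are exactly the regions covered by \eqref{s3:11} and \eqref{s3:15}. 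Your Young step and your final H\"older step then go through as you describe, and yield $\int_0^\infty\int_{\mathcal G}h|u|^\sigma\,d\mu_{\mathcal G}dt\leq-\int_{\mathcal G}u_0\,d\mu_{\mathcal G}\leq0$.

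Keeping your exponential profile would need a genuinely different argument, which your proposal does not contain. One would decompose $B_R\times[0,2R]$ dyadically, using the hypotheses at all scales $R/2^k\geq R_0$. The compact core near $(x_0,0)$ would then be handled directly by $u\in L^1_{\rm loc}$ together with the factor $1/R$.
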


\begin{remark}
	The exponential weight $e^{-\alpha d(x,x_0)}$ is introduced to compensate for the lack of compact support of the spatial test functions used in the study of sign-changing solutions. In particular, it guarantees the integrability of the space-time quantities arising in the weak formulation. It also ensures the validity of the integration by parts argument on the whole metric graph. The weighted integrability conditions in Theorem \ref{s2:T4} reflect a balance between the growth of the graph and the decay of the potential. Moreover, the parameter $\alpha$ depends on the scaling parameter $R$, which plays an essential role in the limiting procedure as $R\to\infty$.
\end{remark}

Next, we collect several corollaries of Theorem \ref{s2:T3} in the case where the potential $h$ takes a special form.

\begin{corollary}\label{s2:C2}
	Let $\mathcal{G}$ be a metric graph satisfying \eqref{s3:1} and $\sigma>1$, and let the potential $h: \mathcal{G}\times[0,\infty)\ra\mathbb{R}$ be a positive function such that $h(x,t)\geq g(x)$ for all $(x,t)\in \mathcal{G}\times[0,\infty)$, for some positive function $g$. Suppose that $u$ is a very weak solution of \eqref{s1:1} such that $u\in L^1_{\text{loc}}([0,\infty); X_\alpha)$, and assume that the initial datum $u_0\in X_\alpha$ satisfies \eqref{s3:12}. Under the volume growth condition
	\begin{equation}\label{s3:7}
		\sum_{x\in\mathcal{V}}\mu_{\mathcal{V}}(x) g^{-\frac{1}{\sigma-1}}(x) e^{-\alpha d(x,x_0)}\leq CR^{\frac{1}{\sigma-1}},\quad \sum_{e\in \mathcal{E}}\int_{0}^{l_e}	g_e^{-\frac{1}{\sigma-1}}(x)e^{-\alpha d(x,x_0)}dx\leq CR^{\frac{1}{\sigma-1}},\quad\forall\ R\geq R_0,
	\end{equation}
	any very weak solution of \eqref{s1:1} must be trivial.
\end{corollary}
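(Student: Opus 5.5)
The plan is to derive Corollary \ref{s2:C2} directly from Theorem \ref{s2:T4} (the text calls it a corollary of Theorem \ref{s2:T3}, but the setting — sign-changing solutions in $L^1_{\text{loc}}([0,\infty);X_\alpha)$ with \eqref{s3:12} — is that of Theorem \ref{s2:T4}). So all that is needed is to check \eqref{s3:15} and \eqref{s3:11} under the hypothesis $h\ge g$ and \eqref{s3:7}; the remaining hypotheses on $u$ and $u_0$ carry over unchanged.

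Since $\sigma>1$, the exponent $-\frac{1}{\sigma-1}$ is negative. Hence $h(x,t)\ge g(x)>0$ gives $h^{-\frac{1}{\sigma-1}}(x,t)\le g^{-\frac{1}{\sigma-1}}(x)$ pointwise, and the same holds on each edge, $h_e^{-\frac{1}{\sigma-1}}\le g_e^{-\frac{1}{\sigma-1}}$. The weighted integrands in \eqref{s3:15} and \eqref{s3:11} are then dominated by time-independent quantities. We enlarge the spatial sums over $\mathcal{V}\cap B_R$, $\mathcal{V}\cap B_R^c$ (and likewise for edges) to sums over all of $\mathcal{V}$, respectively all of $\mathcal{E}$. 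For the vertex term in \eqref{s3:15} this gives
\begin{equation*}
\int_R^{2R}\sum_{x\in\mathcal{V}\cap B_R}\mu_{\mathcal{V}}(x)h^{-\frac{1}{\sigma-1}}(x,t)e^{-\alpha d(x,x_0)}\,dt\le R\sum_{x\in\mathcal{V}}\mu_{\mathcal{V}}(x)g^{-\frac{1}{\sigma-1}}(x)e^{-\alpha d(x,x_0)}\le C R^{1+\frac{1}{\sigma-1}}=CR^{\frac{\sigma}{\sigma-1}},
\end{equation*}
by \eqref{s3:7}. The edge term is handled identically. For \eqref{s3:11} the time interval $[0,2R]$ has length $2R$, which produces the bound $2CR^{\frac{\sigma}{\sigma-1}}$, again of the required form after enlarging $C$.

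The only points requiring care are bookkeeping ones. First, $\alpha=\delta/R$ must be the same in \eqref{s3:7} and in Theorem \ref{s2:T4}, so both are used at the same scale $R\ge R_0$. Second, since the summands are nonnegative, enlarging the index sets and applying Tonelli to swap the sum (or edge integral) with the $t$-integral is legitimate. With \eqref{s3:15}, \eqref{s3:11}, $u\in L^1_{\text{loc}}([0,\infty);X_\alpha)$, $u_0\in X_\alpha$ and \eqref{s3:12} all in place, Theorem \ref{s2:T4} gives $u\equiv0$. I expect no genuine obstacle here; the only thing to watch is matching the powers $R\cdot R^{\frac{1}{\sigma-1}}=R^{\frac{\sigma}{\sigma-1}}$.
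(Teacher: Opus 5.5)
Your proposal is correct and matches the paper's proof. The paper likewise bounds $h^{-\frac{1}{\sigma-1}}$ by $g^{-\frac{1}{\sigma-1}}$, enlarges the spatial sums, uses \eqref{s3:7} together with the time-interval length $R$ (resp.\ $2R$) to get $CR^{\frac{\sigma}{\sigma-1}}$, and then applies Theorem \ref{s2:T4}; you are right that Theorem \ref{s2:T4}, not Theorem \ref{s2:T3}, is the result being invoked.
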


\begin{corollary}\label{s2:C3}
	Let $\mathcal{G}$ be a metric graph satisfying \eqref{s3:1} and $\sigma>1$. Suppose that $u$ is a very weak solution of \eqref{s1:1} with $h\equiv1$ and $u\in L^1_{\text{loc}}([0,\infty); X_\alpha)$,  and that the initial datum $u_0\in X_\alpha$ satisfying \eqref{s3:12}. Moreover, assume that for every $R\geq R_0$, it holds that 
	\begin{equation}\label{s3:8}
		\mu_\mathcal{V}(B_{R+1}\setminus B_R)\leq CR^{\frac{2-\sigma}{\sigma-1}},\quad\mu_\mathcal{E}(B_{R+1}\setminus B_R)\leq CR^{\frac{2-\sigma}{\sigma-1}}.
	\end{equation}
	Then 
	\[ u\equiv0, \quad\text{on }\mathcal{G}\times[0,\infty). \]
\end{corollary}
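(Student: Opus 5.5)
Corollary \ref{s2:C3} follows directly from Corollary \ref{s2:C2} with $g\equiv1$, so that $h\equiv 1\geq g$. We only need to check that \eqref{s3:8} implies \eqref{s3:7}, namely
\[
\sum_{x\in\mathcal V}\mu_{\mathcal V}(x)e^{-\alpha d(x,x_0)}\le CR^{\frac1{\sigma-1}},\qquad \sum_{e\in\mathcal E}\int_0^{l_e}e^{-\alpha d(x,x_0)}dx\le CR^{\frac1{\sigma-1}},
\]
with $\alpha=\delta/R$ and $R\ge R_0$. The remaining hypotheses, $u\in L^1_{\rm loc}([0,\infty);X_\alpha)$, $u_0\in X_\alpha$ and \eqref{s3:12}, are assumed verbatim in Corollary \ref{s2:C3}.

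For the vertex sum, decompose $\mathcal V$ into the ball $B_{R_0}$ and the shells $B_{k+1}\setminus B_k$ for integers $k\ge \lceil R_0\rceil$. The ball has finite measure by \eqref{s3:1}(iv), so it contributes a constant $C_0\le C_0R^{1/(\sigma-1)}$ since $R\ge1$. On each shell, use $e^{-\alpha d}\le e^{-\delta k/R}$ together with \eqref{s3:8} applied at radius $k$ to get
\[
\sum_{k\ge \lceil R_0\rceil} Ck^{\frac{2-\sigma}{\sigma-1}}e^{-\delta k/R}.
\]
Compare this sum with the integral $\int_0^\infty s^{\beta}e^{-\delta s/R}ds=\Gamma(\beta+1)(R/\delta)^{\beta+1}$, where $\beta=\frac{2-\sigma}{\sigma-1}$ and $\beta+1=\frac1{\sigma-1}>0$. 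This gives the bound $C(\delta,\sigma)R^{1/(\sigma-1)}$. The edge sum is handled in exactly the same way, with $\mu_{\mathcal E}$ in place of $\mu_{\mathcal V}$ (recall that $\mu_{\mathcal E}(B_{R+1}\setminus B_R)$ is the total edge length in the shell).

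The main obstacle is the sum-to-integral comparison when $\beta<0$, i.e. $\sigma>2$. In that case $k^\beta$ is decreasing and the comparison must be arranged so that the endpoint term at $k\approx R_0$ does not spoil the bound. Monotonicity gives $k^\beta e^{-\delta k/R}\le\int_{k-1}^{k}s^\beta e^{-\delta s/R}ds$, and the resulting integral is finite since $\beta>-1$. When $\beta\ge0$, the summand is unimodal with maximum of size $O(R^\beta)=o(R^{\beta+1})$, so the integral comparison holds up to that additive term. A second point is that Corollary \ref{s2:C2} implicitly asks for the bound to hold for the specific $\delta$ used there. Since our constant depends only on $\delta$ and $\sigma$, this is fine. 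With \eqref{s3:7} verified, Corollary \ref{s2:C2} yields $u\equiv0$.
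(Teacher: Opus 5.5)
Your proposal is correct and follows the paper's proof. You reduce to Corollary~\ref{s2:C2} with $g\equiv1$, split $\mathcal G$ into a fixed ball plus the unit shells $B_{k+1}\setminus B_k$, apply \eqref{s3:8} shell by shell, and compare $\sum_k k^{\frac{2-\sigma}{\sigma-1}}e^{-\delta k/R}$ with $\int_0^\infty r^{\frac{2-\sigma}{\sigma-1}}e^{-\delta r/R}\,dr=\Gamma\bigl(\tfrac{1}{\sigma-1}\bigr)(R/\delta)^{\frac{1}{\sigma-1}}$; your split into the monotone case $\sigma\ge2$ and the unimodal case $\sigma<2$ is more careful than the paper's treatment of that comparison. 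One small fix: take the initial ball to be $B_{\lceil R_0\rceil}$ rather than $B_{R_0}$, so that together with the shells $k\ge\lceil R_0\rceil$ it covers all of $\mathcal G$.
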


	\subsection{Hyperbolic problem}
	
	We now turn to the hyperbolic inequality \eqref{s1:2}. For any function $u$, we denote $ u^+:=\max\{u,0\}$ and  $u^-:=\min\{u,0\}$. Throughout this subsection, we keep the notation introduced above for $R_0$, $\alpha$, and the weighted space $X_\alpha$. We first establish a Liouville-type theorem for nonnegative solutions to \eqref{s1:2} under suitable weighted volume growth assumptions on the potential.
	
		\begin{theorem}\label{s2:T1}
			Let $\mathcal{G}$ satisfy \eqref{s3:1} and let $h:\mathcal{G}\times[0,\infty)\to\mathbb{R}$ be a positive function. Assume that
			\[
			\theta_1\geq2,\quad
			\theta_2\geq2,\quad
			\theta_1\geq\theta_2/2,
			\]
			and that there exists a constant $C>0$ such that for any $R\geq R_0$, 
			\begin{equation}\label{s3:17}
				\int_0^\infty \left\{\sum_{x\in\mathcal{V}}\mu_{\mathcal{V}}(x) h^{-\frac{1}{\sigma-1}}(x,t)\textbf{1}_{E_R}(x,t)\right\}dt\leq CR^{\frac{\sigma}{\sigma-1}},\int_0^\infty\left\{\sum_{e\in \mathcal{E}}\int_{0}^{l_e}h_e^{-\frac{1}{\sigma-1}}(x,t)\textbf{1}_{E_R}(x,t)dx\right\}dt\leq CR^{\frac{\sigma}{\sigma-1}},
			\end{equation} 
			where 
			$$E_R=\le\{(x,t)\in\mathcal{G}\times[0,\infty):R^{\theta_1}\leq d(x,x_0)^{\theta_1}+t^{\theta_2}\leq 2R^{\theta_1}\ri\}.$$ 
			 If $u$ is a nonnegative very weak solution of \eqref{s1:2} satisfying
			 \begin{equation}\label{s3:13}
			 	\liminf_{R\to\infty}\left\{\int_{B_{R-j}}u_1^+(x)d\mu_{\mathcal{G}}+\int_{B_{2^{1/\theta_1}R+j}}u_1^-(x)d\mu_{\mathcal{G}}\right\}\geq0,
			 \end{equation}
			then \[ u\equiv0, \quad\text{on }\mathcal{G}\times[0,\infty). \]
	\end{theorem}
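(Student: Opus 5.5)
We follow the test-function scheme used for Theorem \ref{s2:T3}, with the parabolic term $\varphi_t$ replaced by $\varphi_{tt}$ and with the extra $u_1$ and $u_0$ boundary terms from \eqref{s2:10}.

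\textbf{Test function.} Let $\rho$ be the smooth pseudo-distance constructed on $\mathcal G$. Along edges it is comparable to $d(\cdot,x_0)$ up to an additive constant of order $j$. Its edge derivatives $\rho'$, $\rho''$ are bounded. Its integration-by-parts formula has no vertex boundary terms, and the vertex part of $\Delta_{\mathcal G}\rho^{\theta_1}$ is controlled via \eqref{s3:1}(v).

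Fix a cutoff $\eta\in C^\infty([0,\infty))$ with $\eta\equiv1$ on $[0,1]$, $\eta\equiv0$ on $[2,\infty)$ and $0\le\eta\le1$. Set
\[
\varphi_R(x,t)=\eta^{k}\Bigl(\frac{\rho(x)^{\theta_1}+t^{\theta_2}}{R^{\theta_1}}\Bigr),
\]
with $k$ large, say $k>2\sigma/(\sigma-1)$. Because $\theta_2\ge2$, we have $\partial_t\varphi_R(x,0)=0$, so the term containing $u_0$ in \eqref{s2:10} drops out.

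\textbf{Main inequality.} Inserting $\varphi_R$ into \eqref{s2:10} and moving $\Delta_{\mathcal G}$ onto $\varphi_R$ by the boundary-free integration by parts gives
\[
\iint h u^\sigma\varphi_R+\int_{\mathcal G}u_1\varphi_R(x,0)\le\iint u\,\bigl(|\partial_{tt}\varphi_R|+|\Delta_{\mathcal G}\varphi_R|\bigr).
\]
Apply Young's inequality with weight $h\varphi_R$:
\[
u|L\varphi_R|\le \tfrac12 h u^\sigma\varphi_R+C\,h^{-\frac1{\sigma-1}}|L\varphi_R|^{\frac{\sigma}{\sigma-1}}\varphi_R^{-\frac1{\sigma-1}}.
\]
Since $k$ is large, $|L\varphi_R|^{\sigma'}\varphi_R^{-1/(\sigma-1)}$ is bounded by a constant times the derivative bounds below, supported on the annulus $\tilde E_R$.

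\textbf{Derivative bounds on $\tilde E_R$.} On this set $t\lesssim R^{\theta_1/\theta_2}$ and $\rho\lesssim R$. One computes
\[
\partial_{tt}\varphi_R\lesssim t^{\theta_2-2}R^{-\theta_1}+t^{2\theta_2-2}R^{-2\theta_1}\lesssim R^{-2\theta_1/\theta_2}\le R^{-1}.
\]
The last step uses $\theta_1\ge\theta_2/2$, which is the condition in the statement.

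For the spatial part, $|\varphi_R''|\lesssim \rho^{\theta_1-2}R^{-\theta_1}+\rho^{2\theta_1-2}R^{-2\theta_1}\lesssim R^{-2}$; this is where $\theta_1\ge2$ enters, to control $\rho''$ terms. The vertex Laplacian is controlled in the same way by a Taylor/mean-value argument across edges of length at most $j$, using \eqref{s3:1}(v). In total $|L\varphi_R|\lesssim R^{-1}$.

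We then need $\tilde E_R\subset M_R$, where $M_R$ is the corresponding region defined with $d$ in place of $\rho$. This is Lemma \ref{s4:L1}, valid for $R\ge\max\{4j,1\}$. With it, \eqref{s3:17} yields
\[
\iint_{\tilde E_R} h^{-\frac1{\sigma-1}}R^{-\frac{\sigma}{\sigma-1}}\le C .
\]

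\textbf{Handling the $u_1$ term.} Since $0\le\varphi_R(\cdot,0)\le1$, with $\varphi_R(\cdot,0)=1$ on $B_{R-j}$ (where $\rho\le R$) and $\varphi_R(\cdot,0)=0$ outside $B_{2^{1/\theta_1}R+j}$, we get
\[
\int u_1\varphi_R(\cdot,0)\ge\int_{B_{R-j}}u_1^+ +\int_{B_{2^{1/\theta_1}R+j}}u_1^- .
\]
By \eqref{s3:13}, the $\liminf$ of the right-hand side is nonnegative.

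\textbf{Conclusion.} Hence $\iint hu^\sigma\varphi_R\le C$ uniformly in $R$, so by monotone convergence $hu^\sigma\in L^1(\mathcal G\times(0,\infty))$. Now refine with Hölder's inequality restricted to the support of the derivatives:
\[
\iint hu^\sigma\varphi_R\le C\Bigl(\iint_{\tilde E_R}hu^\sigma\varphi_R\Bigr)^{1/\sigma}-\int u_1\varphi_R(\cdot,0)+o(1).
\]
The integral over $\tilde E_R$ tends to $0$ by integrability. Along a sequence realizing the $\liminf$ in \eqref{s3:13}, we obtain $\iint hu^\sigma=0$, and since $h>0$, $u\equiv0$.

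\textbf{Main obstacle.} The delicate step is the uniform bound $|\Delta_{\mathcal G}\varphi_R|\lesssim R^{-1}$ on $\tilde E_R$, simultaneously on edges and at vertices, which depends on the properties of $\rho$. The second concern is keeping track of the weak-solution regularity. I would handle this by mirroring the Theorem \ref{s2:T3} argument verbatim, changing only the time-derivative estimate.
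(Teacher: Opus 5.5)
Your route is the paper's: the $\tilde d$-based test function $\tau^s$, the boundary-free integration by parts of Lemma \ref{s5:L5}, $(\tau^s)_t(\cdot,0)=0$, the bounds $|\tau_t|\lesssim R^{-1/2}$ and $|\tau_{tt}|\lesssim R^{-1}$ from $\theta_1\ge\theta_2/2$, the lower bound for the $u_1$ term, and the Young/H\"older endgame. However, one step fails as written.

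\textbf{The failing step.} You bound the spatial term by $\iint u\,|\Delta_{\mathcal G}\varphi_R|$ and then claim that $|L\varphi_R|^{\sigma/(\sigma-1)}\varphi_R^{-1/(\sigma-1)}$ is controlled by the derivative bounds. For the vertex part this is false, because $\Delta_{\mathcal V}$ is nonlocal. The quantity $\Delta_{\mathcal V}\varphi_R(x,t)$ involves the values $\varphi_R(y,t)$ at neighbours, so you cannot extract a factor $\varphi_R(x,t)^{1-2/k}$ at $x$ itself. In your Taylor expansion, the second-order remainder carries $\eta^{k-2}$ at an intermediate point, not at $x$. Concretely, suppose $\varphi_R(x,t)=0$ but some $y\sim x$ has $\varphi_R(y,t)>0$. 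Then $\Delta_{\mathcal V}\varphi_R(x,t)=\mu_{\mathcal V}(x)^{-1}\sum_{y\sim x}\omega(x,y)\varphi_R(y,t)>0$, while the absorbing term $hu^\sigma\varphi_R$ vanishes at $x$, so Young's inequality gives nothing. The size bound $|\Delta_{\mathcal V}\varphi_R|\lesssim R^{-1}$, which you single out as the main obstacle, is therefore not sufficient.

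\textbf{The missing idea.} Keep the sign and use $u\ge0$. You only need an upper bound on $-\Delta_{\mathcal V}\tau^s$, and convexity of $r\mapsto r^s$ on $[0,1]$ gives
\[
-\Delta_{\mathcal V}\tau^s(x,t)\le -s\,\tau^{s-1}(x,t)\,\Delta_{\mathcal V}\tau(x,t)\le \frac{C}{R}\,\tau^{s-1}(x,t)\,\mathbf 1_{M_R}(x,t).
\]
This is \eqref{s4:15} combined with Lemma \ref{s5:L1}(ii). The factor $\tau^{s-1}(x,t)$ at the point itself is exactly what Young's inequality requires.

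\textbf{Support of the vertex term.} This term lives on $M_R$, not on $\tilde E_R$ as you state. Indeed, $\Delta_{\mathcal V}\tau(x,t)$ can be nonzero (with the unfavourable sign) at vertices where $\tau(x,t)=1$ but a neighbour has $\tau<1$. This is why the paper works with $M_R$ throughout. It then covers $M_R$ by finitely many sets $E_{2^{i/\theta_1-1}R}$ via \eqref{s5:7} before applying \eqref{s3:17}, a covering step your sketch also skips.

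\textbf{Edge part.} On edges your absolute-value argument is fine, since $(\eta^k)''$ is local and carries $\eta^{k-2}$ at the same point. Your intermediate claim $|\varphi_R''|\lesssim R^{-2}$ is wrong, though: the term $\theta_1\rho^{\theta_1-1}\rho''R^{-\theta_1}$ is only $O(R^{-1})$, because $\rho''$ is bounded but not small. Your final $O(R^{-1})$ bound is nevertheless the right one.
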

	
	\begin{remark}
		The parameters $\theta_1$ and $\theta_2$ characterize the relative scaling between the spatial and temporal variables in the space-time region $E_R$. The assumptions $\theta_1\geq2$ and $\theta_2\geq1$ (respectively $\theta_2\geq2$ in the hyperbolic case) are required in order to control the spatial Laplacian term and the time derivative terms in the test function argument. The additional relations $\theta_1\geq\theta_2$ and $\theta_1\geq\theta_2/2$ reflect the different scaling structures of the parabolic and hyperbolic problems.
	\end{remark}
	
	\begin{remark}
		The hyperbolic problem exhibits substantially different analytical features from the parabolic one due to the presence of the second-order time derivative. In particular, the weak formulation contains additional boundary terms involving the initial velocity $u_1$, and therefore suitable sign conditions on $u_1$ naturally appear in the nonexistence results for nonnegative solutions.
	\end{remark}
	
	By choosing $\theta_1=2$ and $\theta_2=4$, we immediately obtain the following corollary.
	\begin{corollary}\label{s2:C5}		
		Let $\mathcal{G}$ satisfy \eqref{s3:1} and $\sigma>1$. Assume either $h\geq g(x)>0$ with growth condition
		\begin{equation*}
		\sum_{x\in\mathcal{V} \cap B_{R}}\mu_{\mathcal{V}}(x)g^{-\frac{1}{\sigma-1}}(x)\leq CR^{\frac{\sigma+1}{2(\sigma-1)}},\quad\sum_{e\in \mathcal{E}\cap B_R}\int_{0}^{l_e}g_e^{-\frac{1}{\sigma-1}}(x)dx\leq CR^{\frac{\sigma+1}{2(\sigma-1)}},\quad\forall\ R\geq R_0,
	\end{equation*} 	
		or $h\equiv1$ with volume growth
			\begin{equation*}
			\mu_{\mathcal{V}}(B_R)\leq CR^{\frac{\sigma+1}{2(\sigma-1)}},\quad\mu_{\mathcal{E}}(B_R)\leq CR^{\frac{\sigma+1}{2(\sigma-1)}},\quad\forall\ R\geq R_0.
		\end{equation*} 	
		If $u$ is a nonnegative very weak solution to \eqref{s1:2} satisfying 
		\begin{equation*}
			\liminf_{R\to\infty}\left\{\int_{B_{R-j}}u_1^+(x)d\mu_{\mathcal{G}}+\int_{B_{\sqrt{2}R+j}}u_1^-(x)d\mu_{\mathcal{G}}\right\}\geq0,
		\end{equation*}
		then $u$ is trivial.
	\end{corollary}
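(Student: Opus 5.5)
The plan is to deduce Corollary \ref{s2:C5} directly from Theorem \ref{s2:T1} with $\theta_1=2$ and $\theta_2=4$. These exponents satisfy $\theta_1\ge2$, $\theta_2\ge2$ and $\theta_1\ge\theta_2/2$. With $\theta_1=2$ we have $2^{1/\theta_1}=\sqrt2$, so condition \eqref{s3:13} becomes exactly the assumption on $u_1$ in the corollary. It therefore remains only to verify \eqref{s3:17} for $R\ge R_0$. The case $h\equiv1$ is the special case $g\equiv1$ of the first case, since then the sums reduce to $\mu_{\mathcal V}(B_R)$ and $\mu_{\mathcal E}(B_R)$. So we may assume $h\ge g(x)>0$.

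Since $h\ge g>0$ and $-\frac1{\sigma-1}<0$, we have $h^{-\frac{1}{\sigma-1}}\le g^{-\frac{1}{\sigma-1}}$ pointwise. Next we locate $E_R$. For $(x,t)\in E_R$ we have $d(x,x_0)^2+t^4\le 2R^2$. This gives $d(x,x_0)\le\sqrt2 R$ and $t\le 2^{1/4}R^{1/2}$. Hence
\[
\textbf{1}_{E_R}(x,t)\le \textbf{1}_{B_{\sqrt2R+\epsilon}}(x)\,\textbf{1}_{[0,2^{1/4}R^{1/2}]}(t).
\]
By Fubini, the vertex part of \eqref{s3:17} is then bounded by
\[
2^{1/4}R^{1/2}\sum_{x\in\mathcal V\cap B_{2R}}\mu_{\mathcal V}(x)g^{-\frac1{\sigma-1}}(x)\le C R^{\frac12+\frac{\sigma+1}{2(\sigma-1)}}.
\]
The edge part is bounded in the same way, using the edge hypothesis. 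Here we use $2R\ge R_0$, and we may enlarge the ball slightly, since the hypothesis is stated with $\mathcal E\cap B_R$ and edges meeting $B_{\sqrt2R}$ lie in $B_{2R}$ once $R\ge j$ and $R_0\ge 2j$.

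The exponent computation is
\[
\frac12+\frac{\sigma+1}{2(\sigma-1)}=\frac{(\sigma-1)+(\sigma+1)}{2(\sigma-1)}=\frac{\sigma}{\sigma-1},
\]
which is precisely the bound required in \eqref{s3:17}. Theorem \ref{s2:T1} then yields $u\equiv0$.

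The only delicate point is the bookkeeping on the edges. I must make sure that the meaning of ``$\mathcal E\cap B_R$'' (edges meeting, or edges contained in, the ball) is compatible with the indicator of $E_R$, and that the constant change from $R$ to $2R$ stays within the range $R\ge R_0$. Both are harmless, because the constant $C$ may be enlarged and the remark after Corollary \ref{s2:C4} allows the growth assumption to be used only for large $R$.
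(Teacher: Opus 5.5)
Your proposal is correct and follows the paper's route: the corollary comes from specializing Theorem \ref{s2:T1} to $\theta_1=2$, $\theta_2=4$. The inclusion $E_R\subset\{x:d(x,x_0)\le\sqrt2R\}\times[0,2^{1/4}R^{1/2}]$ then gives the bound $CR^{1/2}\cdot R^{\frac{\sigma+1}{2(\sigma-1)}}=CR^{\frac{\sigma}{\sigma-1}}$, exactly as in the proof of Corollary \ref{s2:C1}. One small correction: edges meeting that closed ball lie in $B_{2R}$ once $R>j/(2-\sqrt2)$, not merely $R\ge j$. This stronger condition is still guaranteed by $R\ge R_0\ge 2j$, so the argument stands.
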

	
	Finally, we conclude by stating the last main result of this paper, in which no sign restriction is made on the solution to the hyperbolic problem \eqref{s1:2}.
	
			\begin{theorem}\label{s2:T2}
		Let $\mathcal{G}$ be a metric graph satisfying \eqref{s3:1} and $\sigma>1$. Assume that $h: \mathcal{G}\times[0,\infty)\ra\mathbb{R}$ is a positive function such that for every $R\geq R_0$,
		\begin{equation}\label{s3:2}
			\int_{R^{\frac{1}{2}}}^{2R^{\frac{1}{2}}} \le\{\sum_{x\in \mathcal{V}\cap B_{R}}\mu_{\mathcal{V}}(x) h^{-\frac{1}{\sigma-1}}(x,t) e^{-\alpha d(x,x_0)} \ri\}dt\leq CR^{\frac{\sigma}{\sigma-1}},
		\end{equation}
		\begin{equation}\label{s3:3}
		\int_{R^{\frac{1}{2}}}^{2R^{\frac{1}{2}}}\le\{\sum_{e\in \mathcal{E}\cap B_{R}}\int_{0}^{l_e}	h_e^{-\frac{1}{\sigma-1}}(x,t)e^{-\alpha d(x,x_0)}dx \ri\}dt\leq CR^{\frac{\sigma}{\sigma-1}},
		\end{equation}
		and
		\begin{equation}\label{s3:4}
			\int_0^{2R^{\frac{1}{2}}} \le\{\sum_{x\in\mathcal{V}\cap B_{R}^c}\mu_{\mathcal{V}}(x) h^{-\frac{1}{\sigma-1}}(x,t) e^{-\alpha d(x,x_0)}\ri\}dt\leq CR^{\frac{\sigma}{\sigma-1}},
		\end{equation}
		\begin{equation}\label{s3:5}
			\int_0^{2R^{\frac{1}{2}}}\le\{\sum_{e\in\mathcal{E}\cap B_{R}^c}\int_{0}^{l_e}	h_e^{-\frac{1}{\sigma-1}}(x,t)e^{-\alpha d(x,x_0)}dx \ri\}dt\leq CR^{\frac{\sigma}{\sigma-1}}.
		\end{equation}
	Suppose that $u:\mathcal{G}\times[0,\infty)\to\mathbb{R}$ is a very weak solution to \eqref{s1:2},
		\begin{equation}\label{s3:6}
		\int_{\mathcal{G}}u_1(x)d\mu_\mathcal{G}\geq0,
	\end{equation}
	 $u_0,u_1\in X_\alpha$, and $u\in L^1_{\text{loc}}([0,\infty); X_\alpha)$, then $u\equiv0$ on $\mathcal{G}\times[0,\infty)$.
	\end{theorem}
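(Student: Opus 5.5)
We argue by contradiction, using separated space--time test functions in the spirit of the proof of Theorem \ref{s2:T4}, with the time scale changed from $R$ to $R^{1/2}$ to match the second-order time derivative. Fix $R\ge R_0$ and set $\alpha=\delta/R$. Let $\rho$ denote the smooth pseudo-distance constructed in the paper, comparable to $d(\cdot,x_0)$ up to an additive constant $j$, with bounded first and second edge derivatives and with vanishing normal contributions at the vertices. We take
\[
\varphi(x,t)=\psi(x)\,\eta_R(t),\qquad \psi(x)=e^{-\alpha\rho(x)},\qquad \eta_R(t)=\eta\bigl(t/R^{1/2}\bigr)^{k},
\]
where $\eta\in C^\infty([0,\infty))$ satisfies $\eta\equiv1$ on $[0,1]$, $\eta\equiv0$ on $[2,\infty)$, $0\le\eta\le1$, and $k$ is large, say $k>2\sigma/(\sigma-1)$. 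Then $\varphi_t(x,0)=0$, so the $u_0$ term in \eqref{s2:10} drops out.

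Since $\psi$ is not compactly supported, we first justify using it. We apply \eqref{s2:10} with the truncations $\psi\,\chi_n$, where $\chi_n$ are cutoffs at spatial scale $n$. Using $u\in L^1_{\rm loc}([0,\infty);X_\alpha)$ and $u_1\in X_\alpha$, we let $n\to\infty$ by dominated convergence; the error terms coming from derivatives of $\chi_n$ tend to zero.

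The integration by parts formula for $\rho$, in which the vertex boundary terms cancel thanks to $[\mathcal K(u)]=0$, gives $\int\varphi\,\Delta_{\mathcal G}u=\int u\,\Delta_{\mathcal G}\varphi$. On the edges, $\psi''=(\alpha^2\rho'^2-\alpha\rho'')\psi$, and on the vertex part $|\Delta_{\mathcal V}\psi|\le C\alpha\,\psi$ by \eqref{s3:1}(v), since $|e^{-\alpha(\rho(y)-\rho(x))}-1|\le C\alpha$ for neighbours. Hence $|\Delta_{\mathcal G}\psi|\le C\alpha\,\psi$ when $\alpha\le 1$. Also $|\eta_{R,tt}|\le C R^{-1}\eta^{k-2}$, supported in $t\in[R^{1/2},2R^{1/2}]$. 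Inequality \eqref{s2:10} then becomes
\[
\iint h|u|^\sigma\varphi+\eta_R(0)\int u_1\psi\le \iint |u|\bigl(|\eta_{R,tt}|\psi+\eta_R|\Delta_{\mathcal G}\psi|\bigr).
\]

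We apply Young's inequality with weight $h\varphi$ on the right-hand side:
\[
|u|\,A\le \varepsilon\,h|u|^\sigma\varphi+C_\varepsilon\,h^{-\frac1{\sigma-1}}\varphi^{-\frac1{\sigma-1}}A^{\frac\sigma{\sigma-1}}.
\]
We absorb the first term into the left-hand side. The time term is supported in $[R^{1/2},2R^{1/2}]$. We split $\mathcal G$ into $B_R$ and $B_R^c$ and use \eqref{s3:2}--\eqref{s3:5}, which are stated with the weight $e^{-\alpha d}$ (comparable to $\psi$). This bounds the time term by $CR^{-\sigma/(\sigma-1)}R^{\sigma/(\sigma-1)}=C$.

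The Laplacian term is supported in $t\in[0,2R^{1/2}]$ with coefficient $\alpha^{\sigma/(\sigma-1)}=(\delta/R)^{\sigma/(\sigma-1)}$. The bounds from \eqref{s3:2}--\eqref{s3:5} give $C\delta^{\sigma/(\sigma-1)}$. Thus we obtain
\[
\tfrac12\iint h|u|^\sigma\varphi+\int u_1\psi\le C\bigl(1+\delta^{\sigma/(\sigma-1)}\bigr)
\]
uniformly in $R$. Letting $R\to\infty$, so that $\alpha\to0$, $\psi\to1$ and $\int u_1\psi\to\int u_1\ge0$ by \eqref{s3:6}, monotone convergence yields $\iint h|u|^\sigma<\infty$.

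To upgrade this to zero, we redo the Young step more carefully. We use Hölder's inequality instead of Young's, restricted to the support of $\eta_{R,tt}$, which is the region $\{R^{1/2}\le t\le 2R^{1/2}\}$ whose contribution tends to zero by finiteness of the integral. This bounds the time term by $C\bigl(\iint_{\mathrm{supp}\,\eta_{R,tt}} h|u|^\sigma\varphi\bigr)^{1/\sigma}\to0$. The Laplacian term is bounded by $C\delta\bigl(\iint h|u|^\sigma\varphi\bigr)^{1/\sigma}$. Letting $R\to\infty$ and then $\delta\to0$ gives $\iint h|u|^\sigma\le 0$. Hence $u\equiv0$ since $h>0$.

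The main obstacle is the uniform bound $|\Delta_{\mathcal G}\psi|\le C\alpha\psi$ on the whole graph, including edge interiors and vertices, together with the integration by parts that produces no vertex boundary terms. This relies on the derivative bounds for the constructed pseudo-distance and on condition (v). A secondary delicate point is justifying the approximation by non-compactly supported test functions.
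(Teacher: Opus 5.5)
Your proof has a genuine gap, and it comes from the choice of spatial factor $\psi=e^{-\alpha\rho}$. This weight has no flat part, so $\Delta_{\mathcal G}\psi$ has size $\alpha\psi$ on the whole graph. The Laplacian error term is therefore spread over all of $\mathcal G\times[0,2R^{1/2}]$.

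The hypotheses at scale $R$ only control $h^{-1/(\sigma-1)}$ on two regions:
\begin{itemize}
\item $B_R\times[R^{1/2},2R^{1/2}]$, by \eqref{s3:2} and \eqref{s3:3};
\item $B_R^c\times[0,2R^{1/2}]$, by \eqref{s3:4} and \eqref{s3:5}.
\end{itemize}
The region $B_R\times[0,R^{1/2})$ is not covered. On $B_{R_0}\times[0,R_0^{1/2})$ no hypothesis at any scale restricts $h$, so your Young remainder $C\alpha^{\sigma/(\sigma-1)}\iint h^{-1/(\sigma-1)}\psi\,\eta_R$ may be infinite. That part could perhaps be repaired, using a multiscale covering plus a direct local $L^1$ bound on $u$ near $t=0$.

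The upgrade step cannot be repaired this way. There the Laplacian term is bounded only by $C\delta X^{1/\sigma}$, where $X:=\int_0^\infty\int_{\mathcal G}h|u|^\sigma d\mu_{\mathcal G}dt$. This does not tend to $0$ as $R\to\infty$, so you only get $X\leq (C\delta)^{\sigma/(\sigma-1)}$. Sending $\delta\to0$ is not allowed, for three reasons:
\begin{itemize}
\item $\delta$ is fixed in the statement.
\item Rescaling $R$ only yields the hypotheses for $\delta'<\delta$ with a constant of order $(\delta/\delta')^{\sigma/(\sigma-1)}C$. This exactly cancels the factor $\delta'$.
\item The loss is sharp. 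Take $h^{-1/(\sigma-1)}$ with radial density $r^{1/(\sigma-1)}$ in $\{t<1\}$ and negligible for $t>1$. Then all four conditions hold for every $\delta$ with $C_\delta\asymp\delta^{-\sigma/(\sigma-1)}$. Your H\"older bound for the Laplacian term is then $\asymp X^{1/\sigma}$, whatever $\delta$ is.
\end{itemize}

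The missing idea is the flat core of the paper's test function
$$\Psi(x,t)=\phi^s(t/R^{1/2})\,\psi\bigl((\tilde{d}(x,x_0)-j)/R\bigr),$$
where $\psi\equiv1$ on $[-j/R,1]$ and $\psi(r)=e^{-\delta r}$ only for $r\geq2$. With this choice, $\Delta_{\mathcal V}\Psi=0$ on $B_R$, and $\Psi_e''$ is supported in $\tilde{B}_{R+j}^c\subset B_R^c$. Hence
$$|\Delta_{\mathcal G}\Psi|\leq \frac{C}{R}\,\phi^s(t/R^{1/2})\,e^{-\delta d(x,x_0)/R}\,\mathbf{1}_{B_R^c}(x).$$

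The error terms then fall exactly where the hypotheses apply, which is why they have that shape:
\begin{itemize}
\item The Laplacian error lives on $B_R^c\times[0,2R^{1/2}]$, the region of \eqref{s3:4} and \eqref{s3:5}.
\item The $\Psi_{tt}$ error lives on $\mathcal G\times[R^{1/2},2R^{1/2}]$. It is handled by \eqref{s3:2} and \eqref{s3:3} on $B_R$, and by \eqref{s3:4} and \eqref{s3:5} on $B_R^c$.
\end{itemize}

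In the final H\"older step the Laplacian term is then bounded by $C\bigl(\int_0^\infty\int_{B_R^c}h|u|^\sigma d\mu_{\mathcal G}dt\bigr)^{1/\sigma}$. This tends to $0$ by the integrability from the first step, with $\delta$ kept fixed.

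The remaining ingredients of your outline agree with the paper:
\begin{itemize}
\item the Kirchhoff terms of the test function vanish because $\tilde{d}'=0$ at vertices;
\item $\Psi_t(\cdot,0)=0$;
\item truncation justifies a test function without compact spatial support;
\item the $u_1$ term is treated the same way.
\end{itemize}
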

	
	\begin{corollary}\label{s2:C6}
		Under the hypotheses of Theorem \ref{s2:T2}, assume either $h\geq g(x)>0$ with growth condition
		\begin{equation*}
			\sum_{x\in\mathcal{V}}\mu_{\mathcal{V}}(x) g^{-\frac{1}{\sigma-1}}(x) e^{-\alpha d(x,x_0)}\leq CR^{\frac{\sigma+1}{2(\sigma-1)}},\quad \sum_{e\in \mathcal{E}}\int_{0}^{l_e}	g_e^{-\frac{1}{\sigma-1}}(x)e^{-\alpha d(x,x_0)}dx\leq CR^{\frac{\sigma+1}{2(\sigma-1)}},\quad\forall\ R\geq R_0,
		\end{equation*}
		or $h\equiv1$ with annular growth
		\begin{equation*}
			\mu_\mathcal{V}(B_{R+1}\setminus B_R)\leq CR^{\frac{\sigma+1}{2(\sigma-1)}-1},\quad\mu_\mathcal{E}(B_{R+1}\setminus B_R)\leq CR^{\frac{\sigma+1}{2(\sigma-1)}-1},\quad\forall\ R\geq R_0.
		\end{equation*}
		Then any very weak solution $u$ is trivial.
	\end{corollary}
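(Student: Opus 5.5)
The plan is to reduce both alternatives to the first case of the hypotheses, and then verify the four growth conditions \eqref{s3:2}--\eqref{s3:5} of Theorem \ref{s2:T2}. Once these hold, the remaining hypotheses of Theorem \ref{s2:T2} (namely \eqref{s3:6}, $u_0,u_1\in X_\alpha$ and $u\in L^1_{\text{loc}}([0,\infty);X_\alpha)$) are assumed, and Theorem \ref{s2:T2} gives $u\equiv0$. Throughout, set $\beta:=\frac{\sigma+1}{2(\sigma-1)}$ and record the arithmetic identity
\[
\beta+\tfrac12=\frac{\sigma+1+\sigma-1}{2(\sigma-1)}=\frac{\sigma}{\sigma-1}.
\]

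\emph{Case $h\ge g$.} Since $h\ge g>0$, we have $h^{-\frac{1}{\sigma-1}}\le g^{-\frac{1}{\sigma-1}}$ pointwise, and the time-independent bound $g^{-\frac{1}{\sigma-1}}$ can be pulled out of the $t$-integral. In \eqref{s3:2} and \eqref{s3:3} I restrict the spatial sum to $\mathcal V\cap B_R$ (resp.\ $\mathcal E\cap B_R$). In \eqref{s3:4} and \eqref{s3:5} I restrict it to the complement $B_R^c$. In both cases I bound the restricted sum by the full weighted sum over $\mathcal V$ (resp.\ $\mathcal E$), which the hypothesis controls by $CR^\beta$. The time intervals have length $R^{1/2}$ or $2R^{1/2}$, so each left-hand side is at most $2CR^{1/2}R^\beta=2CR^{\frac{\sigma}{\sigma-1}}$, as required.

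\emph{Case $h\equiv1$.} By the first case, it suffices to show that $g\equiv1$ satisfies the weighted bound
\[
\sum_{x\in\mathcal V}\mu_{\mathcal V}(x)e^{-\alpha d(x,x_0)}\le CR^\beta,
\]
together with its edge analogue, with $\alpha=\delta/R$. I decompose $\mathcal G$ into the ball $B_{k_0}$, with $k_0:=\lceil R_0\rceil$, and the annuli $A_k:=B_{k+1}\setminus B_k$ for $k\ge k_0$.

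The ball $B_{k_0}$ is finite by \eqref{s3:1}(iv), so its contribution is a constant. Since $R\ge R_0\ge1$, this constant is at most $CR^\beta$.

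On $A_k$ we have $e^{-\alpha d}\le e^{-\delta k/R}$. For edges, I integrate only the portions of edges lying in $A_k$, so that the measure $\mu_{\mathcal E}(A_k)$ appears directly. By the annular assumption, each contribution is at most $Ck^{\beta-1}e^{-\delta k/R}$. It remains to bound
\[
\sum_{k\ge k_0}k^{\beta-1}e^{-\delta k/R}.
\]

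This is the only step needing care, because $\beta-1$ may be negative (when $\sigma>3$), so the summand need not be monotone. The function $s\mapsto s^{\beta-1}e^{-\delta s/R}$ is unimodal on $(0,\infty)$. Hence the sum is bounded by the integral $\int_0^\infty s^{\beta-1}e^{-\delta s/R}\,ds$ plus twice the maximum of the summand. The integral equals $\Gamma(\beta)(R/\delta)^\beta$, which is finite since $\beta>0$. The maximum of the summand is $\le \max\{k_0^{\beta-1},\,C(\beta)(R/\delta)^{\beta-1}\}\le CR^\beta$.

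Collecting the three contributions gives the weighted bound with $g\equiv1$. The first case then yields \eqref{s3:2}--\eqref{s3:5}, and Theorem \ref{s2:T2} concludes the proof.
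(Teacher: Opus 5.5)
Your proposal is correct and follows the paper's route. The paper proves this corollary by repeating the arguments of Corollaries \ref{s2:C2} and \ref{s2:C3}. For $h\ge g$, that argument combines the pointwise bound $h^{-\frac{1}{\sigma-1}}\le g^{-\frac{1}{\sigma-1}}$ with time intervals of length of order $R^{1/2}$, using $\frac{\sigma+1}{2(\sigma-1)}+\frac12=\frac{\sigma}{\sigma-1}$. For $h\equiv1$, it uses the annular decomposition and compares $\sum_k k^{\beta-1}e^{-\delta k/R}$ with a Gamma-type integral. Your unimodality argument for this sum-to-integral step adds care the paper omits, since the paper simply writes the integral bound.
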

	
	\begin{remark}
		The main results of this section show that suitable weighted space-time volume growth conditions on the potential and on the underlying metric graph prevent the existence of global very weak solutions. In this sense, the geometry of the graph and the behavior of the potential jointly determine the validity of Liouville-type phenomena.
	\end{remark}

\section{Proofs of the main results for the parabolic problem}\label{3}

In this section, we introduce a modified distance function to handle non-differentiability of the standard distance along edges and establish a priori estimates for solutions to \eqref{s1:1}. This will allow us to prove Theorems \ref{s2:T3} and \ref{s2:T4}.

\subsection{Modified distance functions}

Fix a vertex $x_0\in\mathcal{V}$. In order to construct suitable test functions, we first analyze the behavior of the distance function on the metric graph. In what follows, we identify points on edge $e$ with elements of interval $I_e$, using these two sets of points interchangeably. For any edge $e\in\mathcal{E}$, as a point $x$ travels along $e$ from $i(e)$ to $j(e)$, the distance to $x_0$ may behave in one of the following ways:

\begin{enumerate}[(a)]
	\item $d(x,x_0) = d\big(i(e),x_0\big) + x$;
	\item $d(x,x_0) = d\big(j(e),x_0\big) + l_e - x$;
	\item there exists a point $q_e\in e$ such that
	\[
	d\big(i(e),x_0\big) + q_e
	= d\big(j(e),x_0\big) + l_e - q_e,
	\]
	and consequently,
	\[
	d(x,x_0)=
	\begin{cases}
		d\big(i(e),x_0\big) + x, & x\in[i(e),q_e],\\
		d\big(j(e),x_0\big) + l_e - x, & x\in[q_e,j(e)].
	\end{cases}
	\]
\end{enumerate}
In Case $(c)$, the distance function fails to be differentiable at the interior point $q_e$, since the left-hand derivative at these points equals $1$ while the right-hand derivative equals $-1$. For a concrete illustration, we refer the reader to Figure $1$ in our previous work \cite{L-L-Z}. In the mollification procedure introduced in Subsection 4.1 of \cite{L-L-Z}, the singularity was shifted to the midpoint of the edge, but it was not completely removed, and its location still had to be analyzed in the proof. To overcome this difficulty, we introduce here a more regular distance function.

More precisely, in Case $(c)$, we redefine the distance function by
$$d(x,x_0):=d(i(e),x_0)+\f{\le(d(j(e),x_0)-d(i(e),x_0)\ri)}{l_e}x, \quad x\in[0,l_e].$$
The distance function remains unchanged in Cases $(a)$ and $(b)$. With this modification, the distance function becomes monotone (or constant) along each edge, depending solely on whether $d(i(e),x_0)$ is larger, smaller, or equal to $d(j(e),x_0)$.

Although the singularity has now been removed, the integration by parts formula on edges still produces boundary terms involving outward derivatives at the vertices. 
To eliminate these contributions, we further smooth the distance function near the endpoints of each edge, ensuring that the one-sided derivatives vanish there.

Let $\eta:[0,1]\to[0,1]$ be a $C^2$ transition function such that
\begin{equation*}
	\begin{cases}
		\eta(0)=0,\ \eta(1)=1,\ \eta^\prime_+(0)=\eta^\prime_-(1)=0;\\
		\eta^\prime(x)\geq0, \text{ for all}\ x\in (0, 1);\\
		\text{there exists}\ C > 0\ \text{such that}\ |\eta^\prime(x)|\leq C\ \text{and}\  |\eta^{\prime\prime}(x)|\leq C, \text{ for all}\ x\in (0, 1).
	\end{cases}
\end{equation*}
We then define the modified distance function $\tilde{d}:\mathcal{G}\to\mathbb{R}_+\cup\{0\}$ as
\begin{equation}\label{s4:1}
	\tilde{d}=\bigoplus_{e\in\mathcal{E}}\tilde{d}_e,
\end{equation}
where, for every edge $e\in\mathcal E$:
\begin{enumerate}[(i)]
	\item if $d_e$ satisfies Case $(a)$, 
	\begin{equation}\label{s4:2}
		\tilde{d}_e(x,x_0)=d(i(e),x_0)+l_e\eta\le(\frac{x}{l_e}\ri), \quad x\in[0,l_e];
	\end{equation}
	\item if $d_e$ satisfies Case $(b)$, 
	\begin{equation}\label{s4:4}
		\tilde{d}_e(x,x_0)=d(j(e),x_0)+l_e-l_e\eta\le(\frac{x}{l_e}\ri), \quad x\in[0,l_e];
	\end{equation}
	\item  if $d_e$ satisfies Case $(c)$, 
	\begin{equation}\label{s4:3}
		\tilde{d}_e(x,x_0)=d(i(e),x_0)+\le(d(j(e),x_0)-d(i(e),x_0)\ri)\eta\le(\frac{x}{l_e}\ri), \quad x\in[0,l_e].
	\end{equation}
\end{enumerate}
By construction, $\tilde d$ coincides with the original distance function at every vertex, while its one-sided derivatives vanish (as opposed to the original $\pm1$) at the endpoints of each edge. This property will be crucial in the integration by parts arguments developed later. Moreover,
\begin{equation}\label{s4:16}
	|d(x,x_0)-\tilde{d}(x,x_0)|\leq j,\quad\forall\ x\in\mathcal{G}.
\end{equation}

\begin{figure}[htbp]
	\centering
	\begin{tikzpicture}
		\begin{scope}[xshift=0cm, local bounding box=base]
			\def\lE{3.0}     
			\def\dU{1}     
			\def\dV{2}     
			
			\def\qE{2} 
			\def\maxD{3}
			
			\draw[->, thick] (-0.5,0) -- (\lE + 0.5,0) node[below] {$x$};
			\draw[->, thick] (0,-0.5) -- (0,\maxD + 0.5) node[left] {$d(x, x_0)$};
			
			\coordinate (startPoint) at (0, \dU);
			\coordinate (cutPoint) at (\qE, \maxD);
			\coordinate (endPoint) at (\lE, \dV);
			
			\draw[dashed,very thick] (startPoint) -- (cutPoint) -- (endPoint);
			\draw[very thick] (startPoint) -- (endPoint);
			
			\node[below left] at (0,0) {$0$};
			\node[below] at (\lE,0) {$l_e$};
			\node[left] at (0,\dU) {$d(i(e),x_0)$};
			\draw[dotted] (endPoint) -- (0,\dV) node[left] {$d(j(e),x_0)$};
			\draw[dotted] (endPoint) -- (\lE,0);
			
			\draw[dotted] (cutPoint) -- (\qE, 0) node[below] {$q_e$};
			\draw[dotted] (cutPoint) -- (0, \maxD) node[left] {$d(q_e,x_0)$};
			
			
			\node[below] at (\lE/2.0, -0.4) {(a)};
		\end{scope}
		
		\begin{scope}[xshift=5.5cm, local bounding box=smooth]
			\def\lE{2.5}     
			\def\dU{0.8}     
			\def\dV{2.8}     
			
			
			\def\maxD{3}
			
			\draw[->, thick] (-0.5,0) -- (\lE + 0.5,0) node[below] {$x$};
			\draw[->, thick] (0,-0.5) -- (0,\maxD + 0.5) node[left] {$\tilde d(x, x_0)$};
			
			\coordinate (startPoint) at (0, \dU);
			\coordinate (endPoint) at (\lE, \dV);
			
			\draw[dashed,very thick] (startPoint) -- (endPoint);
			\draw[very thick] (startPoint) to[out=0, in=180] (endPoint);
			\node[below left] at (0,0) {$0$};
			\node[below] at (\lE,0) {$l_e$};
			\node[left] at (0,\dU) {$d(i(e),x_0)$};
			\draw[dotted] (endPoint) -- (0,\dV) node[left] {$d(j(e),x_0)$};
			\draw[dotted] (endPoint) -- (\lE,0);
			
			
			\node[below] at (\lE/2.0, -0.4) {(b)};
		\end{scope}
		
		\begin{scope}[xshift=10.5cm, local bounding box=pseudo]
			\def\lE{3.0}     
			\def\dU{1.5}     
			\def\dV{1.5}     
			
			\def\qE{1.5} 
			\def\maxD{3}
			
			\draw[->, thick] (-0.5,0) -- (\lE + 0.5,0) node[below] {$x$};
			\draw[->, thick] (0,-0.5) -- (0,\maxD + 0.5) node[left] {$\tilde d(x, x_0)$};
			
			\coordinate (startPoint) at (0, \dU);
			\coordinate (cutPoint) at (\qE, \maxD);
			\coordinate (endPoint) at (\lE, \dV);
			
			\draw[dashed,very thick] (startPoint) -- (cutPoint) -- (endPoint);
			\draw[very thick] (startPoint) -- (endPoint);
			
			\node[below left] at (0,0) {$0$};
			\node[below] at (\lE,0) {$l_e$};
			\node[left] at (0,\dU) {$d(i(e),x_0)$};
			\draw[dotted] (endPoint) -- (\lE,0);
			
			\draw[dotted] (cutPoint) -- (\qE, 0) node[below] {$q_e$};
			\draw[dotted] (cutPoint) -- (0, \maxD) node[left] {$d(q_e,x_0)$};
			
			
			\node[below] at (\lE/2.0, -0.4) {(c)};
		\end{scope}
		
	\end{tikzpicture}
	\caption{We first redefine the distance function to avoid the singularity $q_e$ shown in (a), and then smooth it as illustrated in (b). For edges where $d(i(e),x_0)=d(j(e),x_0)$, the modified distance function becomes degenerate, resulting in a pseudo metric as shown in (c).}
	\label{fig:distanceFunctions}
\end{figure}

In fact, this modification naturally induces a global pseudo-metric $\rho_{x_0}(x,y):=\inf\limits_{\gamma:x\to y}\int_\gamma|\tilde d'(\gamma(s),x_0)|ds$ on $\mathcal G$, where the infimum is taken over all paths $\gamma$ connecting $x$ and $y$. Specifically,
\begin{itemize}
	\item $\rho_{x_0}:\mathcal{G}\times\mathcal{G}\to [0,\infty)$ is a symmetric map with a zero diagonal;
	\item the triangle inequality holds: for all $x,y,z\in \mathcal G$ and any paths $\gamma_1$ connecting $x$ to $z$ and $\gamma_2$ connecting $z$ to $y$, concatenating them yields a path from $x$ to $y$, which gives
	$$\rho_{x_0}(x,y)\leq \int_{\gamma_1}|\tilde d'(\gamma_1(s),x_0)|ds+\int_{\gamma_2}|\tilde d'(\gamma_2(s),x_0)|ds.$$
	Taking the infimum of the right-hand side over all such paths $\gamma_1$ and $\gamma_2$ yields $\rho_{x_0}(x,y) \leq \rho_{x_0}(x,z) + \rho_{x_0}(z,y)$;
	\item there may exist distinct points $x,y \in \mathcal{G}$ such that $\rho_{x_0}(x,y) = 0$ (for instance, any two points on the degenerate segment corresponding to Case (c) in Figure~\ref{fig:distanceFunctions}).
\end{itemize}
While our subsequent analysis primarily relies on the single-variable function $\tilde d(\cdot, x_0)$ to construct test functions, it is intrinsically rooted in this pseudo-metric structure, as it can be naturally recovered via $\tilde{d}(x,x_0)=\rho_{x_0}(x,x_0)$.

We next summarize the basic properties of the modified distance function.

\begin{proposition}\label{s4:P1}
	For every edge $e\in\mathcal{E}$, the following statements hold:
	\begin{enumerate}[(i)]
		\item if $x=i(e)$ or $x=j(e)$, then $\tilde{d}_e^\prime(x,x_0)=0$;
		\item there exists a constant $C>0$ such that for any $x\in(0,l_e)$,
		\begin{equation}\label{s4:5}
			\le|\tilde{d}_e^\prime(x,x_0)\ri|\leq C,\quad \le|\tilde{d}_e^{\prime\prime}(x,x_0)\ri|\leq C.
		\end{equation}
	\end{enumerate}
\end{proposition}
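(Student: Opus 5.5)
The plan is a direct computation from the explicit edgewise formulas \eqref{s4:2}, \eqref{s4:4} and \eqref{s4:3}, using only the properties of $\eta$ and assumption (ii) of \eqref{s3:1}. On a fixed edge $e$, in each of the three cases $\tilde d_e(x,x_0)=A_e+B_e\,\eta(x/l_e)$ for constants $A_e,B_e$, namely
\[
B_e=l_e \ \text{(Case (a))},\qquad B_e=-l_e \ \text{(Case (b))},\qquad B_e=d(j(e),x_0)-d(i(e),x_0) \ \text{(Case (c))}.
\]
Differentiating in $x$ via the chain rule gives
\[
\tilde d_e'(x,x_0)=\frac{B_e}{l_e}\,\eta'\!\left(\frac{x}{l_e}\right),\qquad \tilde d_e''(x,x_0)=\frac{B_e}{l_e^2}\,\eta''\!\left(\frac{x}{l_e}\right).
\]

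For (i), evaluate at $x=0$ and $x=l_e$, i.e.\ at $i(e)$ and $j(e)$. Here the derivatives are one-sided, and $\eta'_+(0)=\eta'_-(1)=0$ immediately gives $\tilde d_e'=0$ at both endpoints in every case.

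For (ii), the only thing needed is a uniform bound on $|B_e|$. In Cases (a) and (b), $|B_e|=l_e\le j$. In Case (c), the triangle inequality for the graph distance $d$ gives $|d(j(e),x_0)-d(i(e),x_0)|\le d(i(e),j(e))\le l_e\le j$. So in all cases $|B_e|\le l_e$.

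Hence
\[
|\tilde d_e'|\le \frac{|B_e|}{l_e}\,\|\eta'\|_\infty\le C,\qquad |\tilde d_e''|\le \frac{|B_e|}{l_e^2}\,\|\eta''\|_\infty\le \frac{C}{l_e}\le \frac{C}{r},
\]
where $r=\inf_e l_e>0$ by (ii) of \eqref{s3:1}. The resulting constant is independent of $e$.

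The only point needing care is the second-derivative bound, which degenerates as $l_e\to0$. This is exactly where the lower bound $r>0$ on edge lengths enters, together with the observation $|B_e|\le l_e$ that keeps the first derivative bounded independently of the edge.
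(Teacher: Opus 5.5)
Your proposal is correct and matches the paper's proof: both do a direct chain-rule computation from the edgewise formulas, using $\eta'_+(0)=\eta'_-(1)=0$ for (i), and for (ii) the bounds on $\eta',\eta''$ together with $|d(j(e),x_0)-d(i(e),x_0)|\le l_e$ and $l_e\ge r$. Your unified form $A_e+B_e\,\eta(x/l_e)$ and the explicit triangle-inequality justification of $|B_e|\le l_e$ simply spell out what the paper writes as the bound $\le|\pm1|$.
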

\begin{proof}
	(i) For any $x\in\mathcal{V}$ and $e\ni x$, we first calculate the right derivative of $\tilde{d}_e$ at the endpoint $x=i(e)$. By $\eta(0)=0$ and $\eta'_+(0)=0$,
	\begin{equation*}
		\tilde{d}_e'(x^+,x_0)
		= \lim_{x\to 0^+} \frac{\tilde{d}_e(x,x_0)-\tilde{d}_e(0,x_0)}{x}=
		\begin{cases}
			\lim_{x\to 0^+}\frac{l_e\eta\le(\frac{x}{l_e}\ri)}{x}=0,&\text{if}\ \eqref{s4:2}\ \text{holds},\\[1.5ex]
			\lim_{x\to 0^+}-\frac{l_e\eta\le(\frac{x}{l_e}\ri)}{x}=0,&\text{if}\ \eqref{s4:4}\ \text{holds},\\[1.5ex]
			\lim_{x\to 0^+}\frac{\le(d(j(e),x_0)-d(i(e),x_0)\ri)\eta\le(\frac{x}{l_e}\ri)}{x}=0,&\text{if}\ \eqref{s4:3}\ \text{holds}.
		\end{cases}
	\end{equation*}
	The reasoning is symmetric at the endpoint $x=j(e)$, so we omit the details here. Thus, the modified distance function has a zero one-sided derivative at both endpoints.
	
	(ii) For any $e\in\mathcal{E}$, we obtain from \eqref{s4:2} and \eqref{s4:4} that 
	$$\le|\tilde{d}_e^\prime(x,x_0)\ri|
	=\le|\pm\eta^\prime\le(\frac{x}{l_e}\ri)\ri|\leq C,\quad	\le|\tilde{d}_e^{\prime\prime}(x,x_0)\ri|=\le|\frac{\pm1}{l_e}\eta^{\prime\prime}\le(\frac{x}{l_e}\ri)\ri|\leq \le|\frac{1}{r}\eta^{\prime\prime}\le(\frac{x}{l_e}\ri)\ri|\leq C,\quad\forall\ x\in\le(0,l_e\ri),$$ 
	and from \eqref{s4:3} that for any $x\in(0,l_e)$, 
	$$\le|\tilde{d}_e^\prime(x,x_0)\ri|
	=\le|\frac{d(j(e),x_0)-d(i(e),x_0)}{l_e}\ri|\le|\eta^\prime\le(\frac{x}{l_e}\ri)\ri|\leq \le|\pm1\ri|\le|\eta^\prime\le(\frac{x}{l_e}\ri)\ri|\leq C,$$
	$$\le|\tilde{d}_e^{\prime\prime}(x,x_0)\ri|
	=\le|\frac{d(j(e),x_0)-d(i(e),x_0)}{l_e}\ri|\le|\frac{1}{l_e}\eta^{\prime\prime}\le(\frac{x}{l_e}\ri)\ri|\leq \le|\pm1\ri|\le|\frac{1}{r}\eta^{\prime\prime}\le(\frac{x}{l_e}\ri)\ri|\leq C,$$
	where we have used $\inf_{e\in\mathcal{E}}l_e>r$ and $|\eta^\prime|\leq C$, $|\eta^{\prime\prime}|\leq C$ on $(0,1)$. Hence, this completes the proof.
\end{proof}

\subsection{Nonexistence for nonnegative global solutions}

Let $\phi\in C^2([0, \infty))$ be a cut-off function on $[0, \infty)$, which satisfies the following conditions
\begin{equation}\label{s4:6}
	\begin{cases}
		\phi\equiv0\ \text{on}\ [2,\infty)\ \text{and}\ \phi\equiv1\ \text{on}\ [0,1]; \\
		\phi^\prime(x)\leq0,\ \text{for every}\ x\in [0, \infty); \\
		\text{there exists}\ C > 0\ \text{such that}\ |\phi^\prime(x)|\leq C\ \text{and}\  |\phi^{\prime\prime}(x)|\leq C, \text{ for all}\ x\in [0, \infty).
	\end{cases}
\end{equation}
Given $R\geq R_0=\max\{1,2j\}$, we define a test function
\begin{equation}\label{s4:7}
	\tau(x,t):=\phi\le(\frac{\tilde{d}(x,x_0)^{\theta_1}+t^{\theta_2}}{R^{\theta_1}}\ri),\quad \forall\ (x,t)\in\mathcal{G}\times[0,\infty),
\end{equation}
where $\tilde{d}$ is a modified distance function specified in \eqref{s4:1}, and $\theta_1,\theta_2>0$ are constants. Let $\theta_1\geq2$, $\theta_2\geq 1$ be fixed constants satisfying $\theta_1\geq\theta_2$. Define
\begin{equation}\label{s4:27}
	\tilde{E}_R=\left\{(x,t)\in\mathcal{G}\times[0,\infty):R^{\theta_1}\leq \tilde{d}(x,x_0)^{\theta_1}+t^{\theta_2}\leq 2R^{\theta_1}\right\},
\end{equation}
\begin{equation}\label{s4:28}
	M_R=\left\{(x,t)\in\mathcal{G}\times[0,\infty):(R/2)^{\theta_1}\leq d(x,x_0)^{\theta_1}+t^{\theta_2}\leq (4R)^{\theta_1}\right\}.
\end{equation} 

\begin{lemma}\label{s4:L1}
	Let $R\geq 4j$ and $\theta_1\geq2$, $\theta_2\geq1$. Then $\tilde{E}_R\subset M_R$.
\end{lemma}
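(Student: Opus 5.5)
Take $(x,t)\in\tilde E_R$, so that $R^{\theta_1}\le \tilde d^{\theta_1}+t^{\theta_2}\le 2R^{\theta_1}$, where $\tilde d=\tilde d(x,x_0)$ and $d=d(x,x_0)$. We must show $(R/2)^{\theta_1}\le d^{\theta_1}+t^{\theta_2}\le (4R)^{\theta_1}$. The only tool needed is \eqref{s4:16}, $|d-\tilde d|\le j$, together with $j\le R/4$, which is the hypothesis $R\ge 4j$. The proof reduces to comparing $d^{\theta_1}$ with $\tilde d^{\theta_1}$ in both directions.

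\emph{Upper bound.} From $\tilde d^{\theta_1}\le 2R^{\theta_1}$ we get $\tilde d\le 2^{1/\theta_1}R\le \sqrt2 R$, since $\theta_1\ge2$. Hence
\[
d\le \tilde d+j\le \sqrt2R+R/4\le 2R,
\]
so $d^{\theta_1}\le 2^{\theta_1}R^{\theta_1}$. Also $t^{\theta_2}\le 2R^{\theta_1}$. Therefore
\[
d^{\theta_1}+t^{\theta_2}\le (2^{\theta_1}+2)R^{\theta_1}\le 4^{\theta_1}R^{\theta_1},
\]
because $2^{\theta_1}+2\le 4^{\theta_1}$ holds trivially for $\theta_1\ge1$.

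\emph{Lower bound.} Split into two cases according to which term dominates.
\begin{itemize}
\item If $t^{\theta_2}\ge (R/2)^{\theta_1}$, the bound is immediate, since $d^{\theta_1}\ge0$.
\item Otherwise $\tilde d^{\theta_1}\ge R^{\theta_1}-(R/2)^{\theta_1}=(1-2^{-\theta_1})R^{\theta_1}\ge \tfrac34R^{\theta_1}$, using $\theta_1\ge2$. Then $\tilde d\ge (3/4)^{1/\theta_1}R\ge \tfrac34 R$. Hence $d\ge \tilde d-j\ge \tfrac34R-\tfrac14R=R/2$, and so $d^{\theta_1}+t^{\theta_2}\ge (R/2)^{\theta_1}$.
\end{itemize}

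The main obstacle is only the bookkeeping of constants in the lower bound. The case split has to be arranged so that the loss of $j$ in passing from $\tilde d$ to $d$ is absorbed by the margin between $R$ and $R/2$, and the threshold $R\ge 4j$ is exactly what makes this margin sufficient. The hypothesis $\theta_2\ge1$ plays no essential role; only $t\ge0$ is used.
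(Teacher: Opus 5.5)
Your proof is correct and follows essentially the same approach as the paper. Both arguments derive each bound from the single comparison $|d-\tilde d|\le j$ together with $j\le R/4$, and both use a two-case split for the lower bound. The differences are cosmetic. You split on whether $t^{\theta_2}\ge (R/2)^{\theta_1}$, whereas the paper splits on $\tilde d\ge 2j$ versus $\tilde d<2j$ and uses $d\ge \tilde d/2$ in the first case. For the upper bound you bound $d\le 2R$ directly, whereas the paper uses $(\tilde d+j)^{\theta_1}\le 2^{\theta_1-1}(\tilde d^{\theta_1}+j^{\theta_1})$.
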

\begin{proof}
	Let $(x,t)\in \tilde{E}_R$. Then
	$$
	R^{\theta_1}\leq \tilde{d}(x,x_0)^{\theta_1}+t^{\theta_2}\leq 2R^{\theta_1}.
	$$
	We first prove the upper bound in the definition of $M_R$. By \eqref{s4:16},
	$$
	d(x,x_0)\leq \tilde{d}(x,x_0)+j.
	$$
	Hence,
	\begin{align*}
		d(x,x_0)^{\theta_1}+t^{\theta_2}
		&\leq (\tilde{d}(x,x_0)+j)^{\theta_1}+t^{\theta_2}\\
		&\leq 2^{\theta_1-1}\bigl(\tilde{d}(x,x_0)^{\theta_1}+j^{\theta_1}\bigr)+t^{\theta_2}\\
		&\leq 2^{\theta_1-1}\cdot 2R^{\theta_1}
		+2^{\theta_1-1}j^{\theta_1}\\
		&\leq\left(2^{\theta_1}+\frac{1}{2^{\theta_1+1}}\right)R^{\theta_1}\\
		&\leq (4R)^{\theta_1},
	\end{align*}
	where we used the fact that $R\geq4j$.

Next we prove the lower bound. We distinguish two cases.

\medskip
\noindent
\textit{Case 1: $\tilde{d}(x,x_0)\geq 2j$.}

In this case,
$$
d(x,x_0)\geq \tilde{d}(x,x_0)-j\geq \frac12\tilde{d}(x,x_0),
$$
which yields
$$
d(x,x_0)^{\theta_1}
\geq \frac{1}{2^{\theta_1}}\tilde{d}(x,x_0)^{\theta_1}.
$$
Therefore,
\begin{align*}
	d(x,x_0)^{\theta_1}+t^{\theta_2}
	&\geq \frac{1}{2^{\theta_1}}
	\bigl(\tilde{d}(x,x_0)^{\theta_1}+t^{\theta_2}\bigr)\geq \left(\frac{R}{2}\right)^{\theta_1}.
\end{align*}

\medskip
\noindent
\textit{Case 2: $\tilde{d}(x,x_0)< 2j$.}

Since $(x,t)\in \tilde{E}_R$, we have
$$
t^{\theta_2}
\geq R^{\theta_1}-\tilde{d}(x,x_0)^{\theta_1}
> R^{\theta_1}-(2j)^{\theta_1}.
$$
Using $R\geq4j$, we obtain
$$
(2j)^{\theta_1}\leq \left(\frac{R}{2}\right)^{\theta_1}.
$$
Hence,
\begin{align*}
	d(x,x_0)^{\theta_1}+t^{\theta_2}
	\geq t^{\theta_2}\geq R^{\theta_1}-\left(\frac{R}{2}\right)^{\theta_1}\geq\frac{R^{\theta_1}}{2^{\theta_1-1}}-\le(\frac{R}{2}\ri)^{\theta_1}\geq\le(\frac{R}{2}\ri)^{\theta_1},
\end{align*}
where we used $\theta_1\geq2$. 

Combining the above estimates, we conclude that
$$
\left(\frac{R}{2}\right)^{\theta_1}
\leq d(x,x_0)^{\theta_1}+t^{\theta_2}
\leq (4R)^{\theta_1},
$$
which implies $(x,t)\in M_R$. Therefore,
$$
\tilde{E}_R\subset M_R.
$$
This completes the proof.
\end{proof}

\begin{remark}
	The stronger condition $R\geq4j$ is only needed for the inclusion $\tilde{E}_R\subset M_R$ in the nonnegative case. All other estimates in the paper remain valid under the weaker condition
	$R\geq 2j$.
\end{remark}

We next show the following upper bounds.
\begin{lemma}\label{s5:L1}
	For any $R\geq R_0$, there exists a constant $C>0$ such that 
	
	(i)	For each edge $e\in\mathcal{E}$,
	\begin{equation}\label{s4:14}
		|\tau_e^{\prime\prime}(x,t)|\leq \frac{C}{R}\textbf{1}_{\tilde{E}_R}(x,t), \quad\forall \ x\in(0,l_e),\ t\in[0,\infty).
	\end{equation}
	
	(ii) For every vertex $x\in\mathcal{V}$,
	\begin{equation}\label{s4:13}
		-\Delta_\mathcal{V} \tau(x,t)\leq \frac{C}{R}\textbf{1}_{M_R}(x,t), \quad\forall \ t\in[0,\infty).
	\end{equation}
	
	(iii) For any $x\in\mathcal{G}$ and $t\in[0,\infty)$,
	\begin{equation}\label{s5:2}
		|\tau_{t}(x,t)|\leq \frac{C}{R}\textbf{1}_{\tilde{E}_R}(x,t).
	\end{equation}
\end{lemma}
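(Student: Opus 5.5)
We prove the three bounds by differentiating the composite $\tau=\phi(s)$ with $s(x,t):=(\tilde d(x,x_0)^{\theta_1}+t^{\theta_2})/R^{\theta_1}$, using Proposition \ref{s4:P1} for the derivatives of $\tilde d$. The basic observation is that $\phi'(s)$ and $\phi''(s)$ vanish unless $1\le s\le 2$, i.e.\ unless $(x,t)\in\tilde E_R$. On this set $\tilde d\le 2^{1/\theta_1}R$ and $t\le (2R^{\theta_1})^{1/\theta_2}$, so every derivative term can be estimated by powers of $R$.

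\emph{Step (iii).} We have $\tau_t=\phi'(s)\,\theta_2 t^{\theta_2-1}R^{-\theta_1}$. On $\tilde E_R$ we get $t^{\theta_2-1}\le C R^{\theta_1(\theta_2-1)/\theta_2}$, hence
\[
|\tau_t|\le C R^{-\theta_1/\theta_2}\mathbf 1_{\tilde E_R}.
\]
Since $\theta_1\ge\theta_2$ and $R\ge 1$, this is at most $C R^{-1}\mathbf 1_{\tilde E_R}$.

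\emph{Step (i).} Along an edge,
\[
\tau_e''=\phi''(s)(s')^2+\phi'(s)s'',\qquad s'=\theta_1\tilde d^{\theta_1-1}\tilde d'R^{-\theta_1},
\]
\[
s''=R^{-\theta_1}\bigl(\theta_1(\theta_1-1)\tilde d^{\theta_1-2}(\tilde d')^2+\theta_1\tilde d^{\theta_1-1}\tilde d''\bigr).
\]
By \eqref{s4:5} and $\tilde d\le CR$ on the support, $|s'|\le C/R$ and $|s''|\le C(R^{-2}+R^{-1})\le C/R$; here $\theta_1\ge2$ keeps the exponent $\theta_1-2$ nonnegative. Thus $|\tau_e''|\le C(R^{-2}+R^{-1})\mathbf 1_{\tilde E_R}\le \frac CR\mathbf 1_{\tilde E_R}$.

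\emph{Step (ii)} is the main obstacle, since on vertices we only have differences rather than derivatives. Write
\[
-\Delta_{\mathcal V}\tau(x,t)=\frac1{\mu_{\mathcal V}(x)}\sum_{y\sim x}\omega(x,y)\bigl(\tau(x,t)-\tau(y,t)\bigr).
\]
We would use the one-variable map $F(\tilde d):=\phi((\tilde d^{\theta_1}+t^{\theta_2})/R^{\theta_1})$ with $t$ fixed and apply the mean value theorem between $\tilde d(x)$ and $\tilde d(y)$. Since $\tilde d=d$ at vertices, we have $|\tilde d(x)-\tilde d(y)|\le l_{xy}\le j$. Hence
\[
|\tau(x,t)-\tau(y,t)|\le \sup|F'|\cdot j\le C R^{-1}\cdot \sup\nolimits_\xi \mathbf 1_{\{1\le s(\xi)\le2\}},
\]
where $\xi$ ranges between $d(x)$ and $d(y)$, and $|F'|\le C\theta_1\xi^{\theta_1-1}R^{-\theta_1}\le C/R$ on the support. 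Summing and using \eqref{s3:1}(v) gives $-\Delta_{\mathcal V}\tau\le C/R$.

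It then remains to check the support: the difference is nonzero only if some point on the segment $[\,d(x),d(y)\,]$ lies in the annulus $\tilde E_R$ in the $\tilde d$-variable. We show this forces $(x,t)\in M_R$, arguing as in Lemma \ref{s4:L1}. The value $d(x)$ differs from such a $\xi$ by at most $j\le R/2$, so the bounds $(R/2)^{\theta_1}\le d(x)^{\theta_1}+t^{\theta_2}\le(4R)^{\theta_1}$ follow from the same two-case argument ($\xi\ge2j$ or $\xi<2j$) used for Lemma \ref{s4:L1}. The delicate point is obtaining the lower bound with only $R\ge R_0=\max\{2j,1\}$. If the constants do not close, we would fall back to the lower bound coming from $\phi\equiv1$ near the origin: when $d(x)^{\theta_1}+t^{\theta_2}$ is much smaller than $R^{\theta_1}$, both $\tau(x)$ and $\tau(y)$ equal $1$ and the difference vanishes. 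The sign also helps, since only $\tau(x)-\tau(y)>0$ needs to be controlled. We note also that the endpoint vanishing of $\tilde d'$ from Proposition \ref{s4:P1}(i) is not needed here; it matters only later, in the integration by parts.
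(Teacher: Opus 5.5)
Your treatment of (i) and (iii) is the same chain-rule computation as in the paper. For (ii) the paper gives no argument of its own and defers to \cite{M-P-S3} with $\alpha=0$. That route (compare the paper's own proof of Lemma \ref{s5:L3}(ii)) expands $F(r)=\phi\bigl((r^{\theta_1}+t^{\theta_2})/R^{\theta_1}\bigr)$ to second order at $d(x,x_0)$. It controls the first-order term through $\phi'\le 0$ and the one-sided Laplacian comparison \eqref{s1:3} with $\alpha=0$, and the second-order term through $|F''|\le C/R^2$ and the jump bound.

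Your first-order mean value argument is more elementary. It needs only $|d(x,x_0)-d(y,x_0)|\le j$ and \eqref{s3:1}(v), and gives the stronger two-sided bound $|\Delta_{\mathcal V}\tau|\le C/R$. It also avoids bounding $F''$, so this part does not use $\theta_1\ge 2$. And it makes the support localisation explicit instead of asserting that it ``carries over verbatim''. The second-order route only pays off when $\Delta_{\mathcal V}d\le C d^{-\alpha}$ with $\alpha>0$, which is not used here.

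You should, however, commit to your fallback rather than hedge. The two-case argument of Lemma \ref{s4:L1} really does need $R\ge 4j$ in its second case, so it does not close for $2j\le R<4j$. The fallback closes for every $R\ge R_0$. Suppose $d(x,x_0)^{\theta_1}+t^{\theta_2}<(R/2)^{\theta_1}$; then $d(x,x_0)<R/2$. Since $a\mapsto (a+j)^{\theta_1}-a^{\theta_1}$ is nondecreasing for $\theta_1\ge1$, every $y\sim x$ satisfies $d(y,x_0)^{\theta_1}+t^{\theta_2}<(R/2+j)^{\theta_1}-(R/2)^{\theta_1}+(R/2)^{\theta_1}=(R/2+j)^{\theta_1}\le R^{\theta_1}$. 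Hence $\tau(\cdot,t)=1$ at $x$ and at all its neighbours, and $\Delta_{\mathcal V}\tau(x,t)=0$. The upper bound comes for free from your sign remark: if $d(x,x_0)^{\theta_1}+t^{\theta_2}\ge 2R^{\theta_1}$, then $\tau(x,t)=0$ and so $-\Delta_{\mathcal V}\tau(x,t)\le 0$. Together these give $-\Delta_{\mathcal V}\tau\le \frac{C}{R}\mathbf 1_{M_R}$ for all $R\ge\max\{2j,1\}$, as stated.
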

\begin{proof}
	We prove only (i) and (iii). For (ii), we refer to [\cite{M-P-S3}, Section 3], specifically the case $\alpha=0$. The proof carries over verbatim since the modified distance function $\tilde d$ satisfies the same regularity and bounded increment properties.
	
	(i) Using the chain rule, for any $e\in\mathcal{E}$, $x\in(0,l_e)$ and $t\in[0,\infty)$, we obtain from \eqref{s4:5} that 
	\begin{align*}
		\le|\tau_e^{\prime\prime}(x,t)\ri|&\leq\le|\phi_e^{\prime\prime}\le(\frac{\tilde{d}(x,x_0)^{\theta_1}+t^{\theta_2}}{R^{\theta_1}}\ri)\frac{\theta_1^2\tilde{d}_e(x,x_0)^{2\theta_1-2}}{R^{2\theta_1}}(\tilde{d}_e^\prime(x,x_0))^2\ri|\\
		&\quad+\le|\phi_e^\prime\le(\frac{\tilde{d}(x,x_0)^{\theta_1}+t^{\theta_2}}{R^{\theta_1}}\ri)\frac{\theta_1(\theta_1-1)\tilde{d}_e(x,x_0)^{\theta_1-2}}{R^{\theta_1}}(\tilde{d}_e^\prime(x,x_0))^2\ri|\\
		&\quad+\le|\phi_e^\prime\le(\frac{\tilde{d}(x,x_0)^{\theta_1}+t^{\theta_2}}{R^{\theta_1}}\ri)\frac{\theta_1\tilde{d}_e(x,x_0)^{\theta_1-1}}{R^{\theta_1}}\tilde{d}_e^{\prime\prime}(x,x_0)\ri|.
	\end{align*}
	Since the support of $\phi^\prime$ and $\phi^{\prime\prime}$ is contained in $\tilde E_R$, and since $\phi^\prime$ and $\phi^{\prime\prime}$ are bounded, we deduce that
	$$\le|\tau_e^{\prime\prime}(x,t)\ri|\leq \frac{C}{R^{2\theta_1}}\tilde{d}_e(x,x_0)^{2\theta_1-2}\textbf{1}_{\tilde{E}_R}(x,t)+\frac{C}{R^{\theta_1}}\tilde{d}_e(x,x_0)^{\theta_1-2}\textbf{1}_{\tilde{E}_R}(x,t)+\frac{C}{R^{\theta_1}}\tilde{d}_e(x,x_0)^{\theta_1-1}\textbf{1}_{\tilde{E}_R}(x,t).$$
	
	 For  $(x,t)\in \tilde{E}_R$, we have $\tilde{d}_e(x,x_0)\leq 2^{\frac{1}{\theta_1}}R$. Using  $R\geq1$ and $\theta_1\geq2$, it follows that
	\begin{equation*}
		\le|\tau_e^{\prime\prime}(x,t)\ri|\leq \frac{C}{R^{2\theta_1}}R^{2\theta_1-2}\textbf{1}_{\tilde{E}_R}(x,t)+ \frac{C}{R^{\theta_1}}R^{\theta_1-2}\textbf{1}_{\tilde{E}_R}(x,t)+ \frac{C}{R^{\theta_1}}R^{\theta_1-1}\textbf{1}_{\tilde{E}_R}(x,t)\leq \frac{C}{R}\textbf{1}_{\tilde{E}_R}(x,t).
	\end{equation*}
	
	(iii)  Observe that $\theta_2\geq1$, and that if $(x,t)\in \tilde{E}_R$, there holds $t\leq CR^{\frac{\theta_1}{\theta_2}}$. Hence, for any $x\in\mathcal{G}$ and $t\in[0,\infty)$,
	\begin{equation}\label{s4:29}
		|\tau_{t}(x,t)|=\le|\phi^\prime\le(\frac{\tilde{d}(x,x_0)^{\theta_1}+t^{\theta_2}}{R^{\theta_1}}\ri)\ri|\frac{\theta_2t^{\theta_2-1}}{R^{\theta_1}}\leq\frac{C}{R^{\theta_1}}R^{\theta_1-\frac{\theta_1}{\theta_2}}\textbf{1}_{\tilde{E}_R}(x,t)\leq \frac{C}{R^{\frac{\theta_1}{\theta_2}}}\textbf{1}_{\tilde{E}_R}(x,t).
	\end{equation}
	Since $\theta_1\geq\theta_2\geq1$, so
	$$	|\tau_{t}(x,t)|\leq \frac{C}{R}\textbf{1}_{\tilde{E}_R}(x,t).$$
	This completes the proof.
\end{proof}

Denote
$$\tilde{B}_R:=\{x\in\mathcal{G}:\tilde{d}(x,x_0)<R\}.$$
Since the support of the test function $\tau$ is contained in
$$\overline{\tilde{B}_{2^{\frac{1}{\theta_1}}R}}\times[0,2^{\frac{1}{\theta_2}}R^{\frac{\theta_1}{\theta_2}}],$$
all sums over vertices $x\in\mathcal{V}$ and edges $e\in\mathcal{E}$ are finite, and all integrals in the time variable are effectively taken over bounded intervals. For any  $t\in(0,2^{\frac{1}{\theta_2}}R^{\frac{\theta_1}{\theta_2}})$, let $\text{supp}(\tau(\cdot,t)) = \{ x \in \mathcal{G} : \tilde{d}(x, x_0)^{\theta_1} \leq 2R^{\theta_1}-t^{\theta_2} \}$ denote the support of $\tau(\cdot,t)$. Since $\tilde d$ is affine on each edge, the intersection $\operatorname{supp}(\tau(\cdot,t))\cap [0,l_e]$ is always an interval. Moreover, since $\operatorname{supp}(\tau(\cdot,t))$ intersects only finitely many edges, we denote these edges by $\mathcal{E}_{\tau(\cdot,t)} = \{ \mathcal{G}_{e_1}, \mathcal{G}_{e_2}, \dots, \mathcal{G}_{e_N} \}$. The intersection of $\text{supp}(\tau(\cdot,t))$ with each such edge $e$ is a line segment $[a_e^{(t)}, b_e^{(t)}] \subset [0, l_e]$, where $0 \leq a_e^{(t)}< b_e^{(t)} \leq l_e$. More precisely, the following there situations may occur:
\begin{itemize}
	\item the entire edge is contained in $\text{supp}(\tau(\cdot,t))$, i.e., $[a_e^{(t)}, b_e^{(t)}] = [0, l_e]$;
	\item only a terminal segment of the edge is contained in $\text{supp}(\tau(\cdot,t))$, i.e., $[a_e^{(t)}, b_e^{(t)}] = [0, b_e^{(t)}]$;
	\item only a terminal segment of the edge is contained in $\text{supp}(\tau(\cdot,t))$, i.e., $[a_e^{(t)}, b_e^{(t)}] = [a_e^{(t)}, l_e]$.
\end{itemize}
For simplicity, we use $[a_e^{(t)}, b_e^{(t)}]$ to denote all such intervals without specifying their exact form. We also use $\mathcal{E}_{\tau(\cdot,t)}^1$, $\mathcal{E}_{\tau(\cdot,t)}^2$ and $\mathcal{E}_{\tau(\cdot,t)}^3$ to denote these edges, respectively. 

On metric graphs, the edge-wise integration by parts formula for $\tau$ (with the standard distance $d$) generally produces boundary terms involving outward normal derivatives at vertices, unlike combinatorial graphs, which only consider vertex-based formulas.  The use of the modified distance $\tilde d$ ensures that the resulting test function has vanishing vertex contributions in the integration by parts formula below.

\begin{lemma}\label{s5:L5}
	Let $s>\max\{2,\sigma/(\sigma-1)\}$ with $\sigma>1$, and let $R\geq R_0$. Suppose that $u$ is a very weak nonnegative solution to \eqref{s1:1}, and $\tau:\mathcal{G}\times[0,\infty)\to\mathbb{R}$ is defined in \eqref{s4:7}. Then 
	\begin{equation}\label{s4:9}
			\int_0^\infty\mathcal{L}_{\Delta_{\mathcal{G}}u}(\tau^s)dt=\int_0^\infty\mathcal{L}_{\Delta_{\mathcal{G}}\tau^s}(u)dt
	\end{equation}
	where $\mathcal{L}_{\Delta_{\mathcal{G}}\cdot}$ is defined in \eqref{s2:1}.
\end{lemma}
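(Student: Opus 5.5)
We prove \eqref{s4:9} by reducing it to a Green-type identity at each fixed time and then integrating in $t$. By the definition \eqref{s2:1} of $\mathcal{L}_{\Delta_{\mathcal{G}}\cdot}$ (the pairing $\int_{\mathcal G}(\cdot)\,d\mu_{\mathcal G}$, split via \eqref{s2:22} into a vertex sum weighted by $\mu_{\mathcal V}$ and edge integrals), it suffices to show that for every fixed $t\in[0,\infty)$
\[
\int_{\mathcal G}\tau^s(x,t)\Delta_{\mathcal G}u(x,t)\,d\mu_{\mathcal G}=\int_{\mathcal G}u(x,t)\Delta_{\mathcal G}\tau^s(x,t)\,d\mu_{\mathcal G}.
\]
Both sides vanish for $t>2^{1/\theta_2}R^{\theta_1/\theta_2}$. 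All sums and integrals are over the finitely many vertices and edges meeting $\overline{\tilde B_{2^{1/\theta_1}R}}$, by \eqref{s3:1}(iv) and \eqref{s4:16}. So the $t$-integration is over a bounded interval, and integrability follows from $u(x,\cdot)\in L^1_{\rm loc}$.

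First we check that $\tau^s(\cdot,t)$ lies in $\mathcal D(\mathcal G)$ and has vanishing Kirchhoff term, so $\Delta_{\mathcal G}\tau^s$ makes sense. Since $s>2$ and $\phi\in C^2$, the map $\phi^s$ is $C^2$. On each edge, $\tilde d_e$ is $C^2$, hence $\tau_e^s$ is $C^2$ on $[0,l_e]$. By the chain rule,
\[
(\tau^s)_e'=s\tau^{s-1}\phi'\,\theta_1\tilde d^{\theta_1-1}R^{-\theta_1}\tilde d_e'.
\]
By Proposition \ref{s4:P1}(i) this vanishes at $i(e)$ and $j(e)$. Therefore every outward derivative of $\tau^s$ at every vertex is zero, and $[\mathcal K(\tau^s)](x,t)=0$ for all $x\in\mathcal V$. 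Continuity across vertices holds because $\tilde d$ agrees with $d$ at vertices. For $u$, we use $u(\cdot,t)\in\mathcal D(\mathcal G)$ with $[\mathcal K(u)](x,t)=0$, which is part of Definition \ref{s2:D1}.

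Next we split $\Delta_{\mathcal G}$ into its vertex part $\Delta_{\mathcal V}$ (weighted by $\omega,\mu_{\mathcal V}$) and its edge part (second derivative along edges), following Appendix \ref{D}.

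For the vertex part, we use the symmetry $\omega(x,y)=\omega(y,x)$ and the discrete Green formula:
\[
\sum_x\mu_{\mathcal V}\tau^s\Delta_{\mathcal V}u=-\tfrac12\sum_{x,y}\omega(x,y)(u(y)-u(x))(\tau^s(y)-\tau^s(x))=\sum_x\mu_{\mathcal V}u\Delta_{\mathcal V}\tau^s.
\]
This is justified because $\tau^s(\cdot,t)$ has finite support and the graph is locally finite, so all sums are finite and the rearrangement is legitimate. Condition \eqref{s3:1}(v) is not even needed here beyond finiteness.

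For the edge part, we integrate by parts twice on each edge:
\[
\int_0^{l_e}\tau^s_e u_e''\,dx=\big[\tau^s_eu_e'-(\tau^s_e)'u_e\big]_0^{l_e}+\int_0^{l_e}u_e(\tau^s_e)''\,dx.
\]
Summing over edges, the terms $(\tau^s_e)'u_e$ at the endpoints vanish identically by Proposition \ref{s4:P1}(i).

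The main obstacle is the remaining boundary terms $\tau^s(v)\,u_e'(v)$, since $u_e'$ at the endpoints need not vanish. We regroup them by vertex. Because $\tau^s$ is continuous at each vertex $v$, they collect into $\tau^s(v)\sum_{e\ni v}\partial_{n}u_e(v)$, which is $\tau^s(v)$ times the Kirchhoff quantity. This is zero by $[\mathcal K(u)](v,t)=0$. If in \eqref{s2:4} the Kirchhoff term is defined with $\mu_{\mathcal V}$-coupling to the vertex Laplacian, we instead combine it with the vertex part before cancelling.

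We must also handle edges only partially inside the support, which are the sets $\mathcal E^2_{\tau(\cdot,t)}$ and $\mathcal E^3_{\tau(\cdot,t)}$. There we integrate over $[a_e^{(t)},b_e^{(t)}]$. At an interior cut point, $\tau^s$ and $(\tau^s)'$ both vanish, because $\phi^s$ vanishes to order $s>2$ at $2$. Hence no extra boundary term arises.

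Adding the vertex and edge identities gives the fixed-$t$ equality, and integrating in $t$ yields \eqref{s4:9}.
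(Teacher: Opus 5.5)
Your proposal is correct and follows essentially the same route as the paper. The vertex part is handled by the discrete Green formula, using the finite support of $\tau^s(\cdot,t)$. The edge part is handled by integrating by parts twice: the $u\,(\tau^s)'$ endpoint terms vanish by Proposition~\ref{s4:P1}(i), which the paper packages as $[\mathcal K(\tau)](x,t)=0$; the $\tau^s u'$ endpoint terms regroup by vertex into $\tau^s[\mathcal K(u)]=0$; and cut-point terms vanish because $\phi(2)=\phi'(2)=0$. The only cosmetic difference is that you can integrate over whole edges, since $\tau_e^s\in C^2([0,l_e])$, whereas the paper integrates over $[a_e^{(t)},b_e^{(t)}]$ and then extends using $(\tau_e^s)''=0$ off the support.
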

\begin{proof}
	Recalling the definitions of $\Delta_\mathcal{G}u$ and $\mathcal L_{\Delta_{\mathcal G}u}$
	in \eqref{s2:14} and \eqref{s2:1}, we have
	$$\Delta_{\mathcal G}u=\Delta_{\mathcal V}ud\mu_{\mathcal{V}}+\Delta_{\mathcal E}ud\mu_{\mathcal{E}},$$
	and
	$$	\int_0^\infty\mathcal{L}_{\Delta_{\mathcal{G}}u}(\tau^s(x,t))dt=\int_{0}^\infty \le\{\sum_{x\in\mathcal{V}}\mu_{\mathcal{V}}(x)\tau^s(x,t)\Delta_{\mathcal{V}} u(x,t)\ri\}dt+	\int_0^\infty\le\{\sum_{e\in\mathcal{E}}\int_{0}^{l_e}u_e^{\prime\prime}(x,t)\tau_e^s(x,t)dx \ri\}dt,$$
	it suffices to prove the identity separately for the vertex part and the edge part.
	
	First, since $\tau(\cdot,t)$ has finite support on $\mathcal V$ for every fixed $t$, all vertex sums below are finite. By the symmetry of the vertex Laplacian $\Delta_{\mathcal V}$ with respect to the measure $\mu_{\mathcal V}$, we have
	\begin{equation*}
		\int_{0}^\infty \le\{\sum_{x\in\mathcal{V}}\mu_{\mathcal{V}}(x)\tau^s(x,t)\Delta_{\mathcal{V}} u(x,t)\ri\}dt=\int_{0}^\infty \le\{\sum_{x\in\mathcal{V}}\mu_{\mathcal{V}}(x)u(x,t)\Delta_{\mathcal{V}} \tau^s(x,t)\ri\}dt.
	\end{equation*}
	The proof is standard and therefore omitted.
	
	Next we consider the edge part.  Fix $t\in(0,2^{\frac{1}{\theta_2}}R^{\frac{\theta_1}{\theta_2}})$.  Let
	\begin{equation*}
		\mathcal{V}_1^{(t)}=\{x\in\mathcal{V}:d(x, x_0)^{\theta_1} \leq 2R^{\theta_1}-t^{\theta_2}\},\quad\Gamma^{(t)}=\{x\in\mathcal{G}\setminus\mathcal V:\tilde{d}(x,x_0)^{\theta_1}=2R^{\theta_1}-t^{\theta_2}\}.
	\end{equation*}
	Observe that $\Gamma^{(t)}$ consists of the cut points generated by the boundary of $\operatorname{supp}\tau(\cdot,t)$, which may lie in the interior of edges. 
	
	We first show that
	\begin{equation}\label{s4:new1}
		[\mathcal{K}(\tau)](x,t)\equiv0,\quad\forall\ x\in  \mathcal{V}_1^{(t)}.
	\end{equation}
	In fact, let $x\in\mathcal{V}_1^{(t)}$, consider any edge $e\ni x$ with $e\in \mathcal{E}_{\tau(\cdot,t)}$. By part $(i)$ of Proposition \ref{s4:P1}, $\tilde{d}^\prime(x,x_0)=0$. It thus follows that
	$$\tau^\prime_e(x,t)=\phi_e^\prime\le(\frac{\tilde{d}(x,x_0)^{\theta_1}+t^{\theta_2}}{R^{\theta_1}}\ri)\frac{\theta_1\tilde{d}(x,x_0)^{\theta_1-1}}{R^{\theta_1}}\tilde{d}^{\prime}(x,x_0)=0,\quad\forall x\in \mathcal{V}_1^{(t)}.$$
	Therefore
	\[
	\frac{d\tau_e(x,t)}{dn}=0
	\]
	for every edge incident to \(x\). Since \(x\) has finite degree, it follows from \eqref{s2:3} and \eqref{s2:4} that
	\[
	[\mathcal K(\tau)](x,t)
	=
	\sum_{e\ni x}
	\frac{d\tau_e(x,t)}{dn}
	=
	0.
	\]
	This proves \eqref{s4:new1}.
	
	Applying integration by parts twice, we obtain
	\begin{align}\label{s4:10}
		\nonumber	&\sum_{e\in\mathcal{E}}\int_{0}^{l_e}u_e^{\prime\prime}(x,t)\tau_e^s(x,t)dx\\
		\nonumber&=\sum_{e\in\mathcal{E}_{\tau(\cdot,t)}}\int_{a_e^{(t)}}^{b_e^{(t)}}u_e^{\prime\prime}(x,t)\tau_e^s(x,t)dx\\
		\nonumber&= -\sum_{e\in{\mathcal{E}_{\tau(\cdot,t)}}}\int_{a_e^{(t)}}^{b_e^{(t)}} u_e^{\prime}(x,t)(\tau_e^s)^{\prime}(x,t) dx+\sum_{e\in{\mathcal{E}_{\tau(\cdot,t)}}}\le(u_e^\prime(b_e^{(t)},t)\tau_e^s(b_e^{(t)},t)-u_e^\prime(a_e^{(t)},t)\tau^s_e(a_e^{(t)},t)\ri)\\
		\nonumber&=\sum_{e\in{\mathcal{E}_{\tau(\cdot,t)}}}\int_{a_e^{(t)}}^{b_e^{(t)}} u_e(x,t)(\tau_e^s)^{\prime\prime}(x,t) dx\\
		\nonumber&\quad-\sum_{e\in{\mathcal{E}_{\tau(\cdot,t)}}}\le(u_e(b_e^{(t)},t)(\tau_e^s)^\prime(b_e^{(t)},t)-u_e(a_e^{(t)},t)(\tau_e^s)^\prime(a_e^{(t)},t)\ri)\\
		&\quad+\sum_{e\in{\mathcal{E}_{\tau(\cdot,t)}}}\le(u_e^\prime(b_e^{(t)},t)\tau_e^s(b_e^{(t)},t)-u_e^\prime(a_e^{(t)},t)\tau^s_e(a_e^{(t)},t)\ri).
	\end{align}
	Denote the last two terms by $B_1$ and $B_2$, respectively. We first deal with $B_1$. The boundary contributions arise from vertices in $\mathcal V_1^{(t)}$ and cut points in $\Gamma^{(t)}$. If $x\in\Gamma^{(t)}$, then
	$$
    \frac{\widetilde d(x,x_0)^{\theta_1}+t^{\theta_2}}{R^{\theta_1}}=2.
	$$
	Since \(\phi\equiv0\) on \([2,\infty)\), we have $\phi'(2)=0$, and consequently, $\tau_e'(x,t)=0$.
	Hence all cut-point contributions vanish. Noting that each vertex is of finite degree, we may transform the sum over edge endpoints to one over the adjacent edges of each vertex. It follows from \eqref{s4:new1} that
	\begin{align*}
		B_1
		&=\sum_{e\in{\mathcal{E}_{\tau(\cdot,t)}^1}}\le(su_e(l_e,t)\tau_e^{s-1}(l_e,t)\tau_e^\prime(l_e,t)-su_e(0,t)\tau_e^{s-1}(0,t)\tau_e^\prime(0,t)\ri)\\
		&\quad+\sum_{e\in{\mathcal{E}_{\tau(\cdot,t)}^2}}\le(su_e(b_e^{(t)},t)\tau_e^{s-1}(b_e^{(t)},t)\tau_e^\prime(b_e^{(t)},t)-su_e(0,t)\tau_e^{s-1}(0,t)\tau_e^\prime(0,t)\ri)\\
		&\quad+\sum_{e\in{\mathcal{E}_{\tau(\cdot,t)}^3}}\le(su_e(l_e,t)\tau_e^{s-1}(l_e,t)\tau_e^\prime(l_e,t)-su_e(a_e^{(t)},t)\tau_e^{s-1}(a_e^{(t)},t)\tau_e^\prime(a_e^{(t)},t)\ri)\\
		&=\sum_{e\in{\mathcal{E}_{\tau(\cdot,t)}^1}}\le(su_e(l_e,t)\tau_e^{s-1}(l_e,t)\frac{d\tau_e(l_e,t)}{dn}+su_e(0,t)\tau_e^{s-1}(0,t)\frac{d\tau_e(0,t)}{dn}\ri)\\
		&\quad+\sum_{e\in{\mathcal{E}_{\tau(\cdot,t)}^2}}su_e(0,t)\tau_e^{s-1}(0,t)\frac{d\tau_e(0,t)}{dn}+\sum_{e\in{\mathcal{E}_{\tau(\cdot,t)}^3}}su_e(l_e,t)\tau_e^{s-1}(l_e,t)\frac{d\tau_e(l_e,t)}{dn}\\
		&=\sum_{x\in\mathcal{V}_1^{(t)}}su(x,t)\tau^{s-1}(x,t)\sum_{e\ni x}\frac{d\tau_e(x,t)}{dn}\\
		&=\sum_{x\in\mathcal{V}^{(t)}_1}su(x,t)\tau^{s-1}(x,t)[\mathcal{K}(\tau)](x,t)\\
		&=0.
	\end{align*}
	Next we consider $B_2$. If $x\in\Gamma^{(t)}$, then
	$$
	\tau(x,t)
	=
	\phi(2)
	=
	0.
	$$
	Note that the Kirchhoff condition,
	$$
	[\mathcal K(u)](x,t)=0,
	\quad\forall\ 
	x\in\mathcal V.
	$$
	Hence
	\begin{align*}
		B_2&=\sum_{e\in{\mathcal{E}_{\tau(\cdot,t)}^1}}\le(u_e^\prime(l_e,t)\tau_e^s(l_e,t)-u_e^\prime(0,t)\tau^s_e(0,t)\ri)\\
		&\quad+\sum_{e\in{\mathcal{E}_{\tau(\cdot,t)}^2}}\le(u_e^\prime(b_e^{(t)},t)\tau_e^s(b_e^{(t)},t)-u_e^\prime(0,t)\tau^s_e(0,t)\ri)\\
		&\quad+\sum_{e\in{\mathcal{E}_{\tau(\cdot,t)}^3}}\le(u_e^\prime(l_e,t)\tau_e^s(l_e,t)-u_e^\prime(a_e^{(t)},t)\tau^s_e(a_e^{(t)},t)\ri)\\
		&=\sum_{e\in{\mathcal{E}_{\tau(\cdot,t)}^1}}\le(\tau_e^s(l_e,t)\frac{du_e(l_e,t)}{dn}+\tau^s_e(0,t)\frac{du_e(0,t)}{dn}\ri)\\
		&\quad+\sum_{e\in{\mathcal{E}_{\tau(\cdot,t)}^2}}\tau^s_e(0,t)\frac{du_e(0,t)}{dn}+\sum_{e\in{\mathcal{E}_{\tau(\cdot,t)}^3}}\tau_e^s(l_e,t)\frac{du_e(l_e,t)}{dn}\\
		&=\sum_{x\in \mathcal{V}_1^{(t)}}\tau^{s}(x,t)[\mathcal{K}(u)](x,t)\\
		&=0.
	\end{align*}
	Substituting these identities into \eqref{s4:10}, we obtain
	$$
	\sum_{e\in\mathcal E}
	\int_0^{l_e}
	u_e''(x,t)\tau_e^s(x,t)dx
	=
	\sum_{e\in\mathcal E_{\tau(\cdot,t)}}
	\int_{a_e^{(t)}}^{b_e^{(t)}}
	u_e(x,t)
	(\tau_e^s(x,t))''dx.
	$$
	
	Since $\phi\equiv0$ on $[2,\infty)$, it follows that
	$$
	(\tau_e^s(x,t))''
	=
	0,\quad\forall\ x\in	(0,l_e)
	\setminus
	[a_e^{(t)},b_e^{(t)}].
	$$
	Therefore,
	$$
	\int_{a_e^{(t)}}^{b_e^{(t)}}
	u_e(x,t)
	(\tau_e^s(x,t))''dx
	=
	\int_0^{l_e}
	u_e(x,t)
	(\tau_e^s(x,t))''dx,
	$$
and
	$$
\sum_{e\in\mathcal E}
	\int_0^{l_e}
	u_e''(x,t)\tau_e^s(x,t)dx
	=
	\sum_{e\in\mathcal E}
	\int_0^{l_e}
	u_e(x,t)
	(\tau_e^s(x,t))''dx,\quad\forall\ t\in (0,2^{\frac{1}{\theta_2}}R^{\frac{\theta_1}{\theta_2}}).$$
Since
$$\tau(\cdot,t)\equiv0,\quad\forall\ t\ge2^{\frac1{\theta_2}}R^{\frac{\theta_1}{\theta_2}},$$
both sides of the above identity vanish for $t\ge2^{1/\theta_2}R^{\theta_1/\theta_2}$, and therefore the identity holds for all $t\ge0$. Integrating with respect to $t$ over $(0,\infty)$, we derive
$$\int_0^\infty\left\{\sum_{e\in\mathcal E}\int_0^{l_e}u_e''(x,t)\tau_e^s(x,t)dx\right\}dt=
\int_0^\infty\left\{\sum_{e\in\mathcal E}\int_0^{l_e}u_e(x,t)(\tau_e^s(x,t))''dx\right\}dt.$$
The proof is complete.
\end{proof}

\begin{lemma}\label{s5:L2}
	Let $u$ be a very weak nonnegative solution to \eqref{s1:1}.  If condition \eqref{s3:10} holds, then there exists a constant $C>0$ such that 
	$$\int_0^\infty \int_{\mathcal{G}}h(x,t)u^\sigma(x,t) d\mu_\mathcal{G}dt\leq C.$$
\end{lemma}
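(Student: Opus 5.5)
The plan is the standard test-function method with $\varphi=\tau^s$. Here $\tau$ is as in \eqref{s4:7}, and $s>\max\{2,\sigma/(\sigma-1)\}$ is fixed as in Lemma \ref{s5:L5}. This $\varphi$ is admissible in Definition \ref{s2:D1}: it is nonnegative, $C^1$ in $t$, and supported in $\overline{\tilde B_{2^{1/\theta_1}R}}\times[0,2^{1/\theta_2}R^{\theta_1/\theta_2}]$. Inserting it in \eqref{s2:9} and using Lemma \ref{s5:L5} to move the Laplacian onto $\tau^s$ gives
\begin{equation*}
\int_0^\infty\!\!\int_{\mathcal G}hu^\sigma\tau^s\,d\mu_{\mathcal G}dt+\int_{\mathcal G}u_0\tau^s(\cdot,0)\,d\mu_{\mathcal G}\le -\int_0^\infty\!\!\int_{\mathcal G}u\,\Delta_{\mathcal G}\tau^s\,d\mu_{\mathcal G}dt-\int_0^\infty\!\!\int_{\mathcal G}u\,(\tau^s)_t\,d\mu_{\mathcal G}dt .
\end{equation*}
I would drop the initial-data term. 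Although the definition does not state it, I take $u_0\ge0$, which is natural for nonnegative solutions since $u(\cdot,0)=u_0$; otherwise the initial term would have to be controlled separately as $R\to\infty$.

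Next I bound the right-hand side pointwise, using $u\ge0$.

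On edges, $(\tau^s)''=s\tau^{s-1}\tau''+s(s-1)\tau^{s-2}(\tau')^2$. By Lemma \ref{s5:L1}(i), $|\tau''|\le \frac{C}{R}\textbf 1_{\tilde E_R}$. The same chain-rule computation with \eqref{s4:5} gives $|\tau'|\le CR^{-1}\textbf 1_{\tilde E_R}$, so $(\tau')^2\le CR^{-1}$ because $R\ge1$. Since $s\ge2$, this yields $-(\tau^s)''\le \frac{C}{R}\tau^{s-2}\textbf 1_{\tilde E_R}$.

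On vertices, I use convexity of $y\mapsto y^s$:
\[
\Delta_{\mathcal V}\tau^s(x)\ge s\tau^{s-1}(x)\Delta_{\mathcal V}\tau(x).
\]
Combined with Lemma \ref{s5:L1}(ii), this gives $-\Delta_{\mathcal V}\tau^s\le \frac{C}{R}\tau^{s-1}\textbf 1_{M_R}$. One point needs care: where $\tau(x)=0$ but a neighbour of $x$ lies in the support, $\tau^{s-1}(x)=0$ and the bound is trivially $\le 0$, so no indicator issue arises.

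For the time derivative, Lemma \ref{s5:L1}(iii) gives $|(\tau^s)_t|\le \frac{C}{R}\tau^{s-1}\textbf 1_{\tilde E_R}$. Finally, Lemma \ref{s4:L1} gives $\tilde E_R\subset M_R$, so every error term is bounded by $\frac CR\int\!\!\int u\,\tau^{s-2}\textbf 1_{M_R}$.

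To absorb the error, I apply Young's inequality in the form $ab\le \varepsilon a^\sigma+C_\varepsilon b^{\sigma'}$ with $\sigma'=\sigma/(\sigma-1)$, choosing
\[
a=u\,h^{1/\sigma}\tau^{s/\sigma},\qquad b=\frac{1}{R}\,h^{-1/\sigma}\tau^{s-2-s/\sigma}.
\]
The exponent $(s-2-s/\sigma)\sigma'$ must be nonnegative, which may require enlarging $s$, say $s\ge 2\sigma'$. That is harmless, since Lemma \ref{s5:L5} holds for every admissible $s$. Absorbing the $\varepsilon$ term into the left-hand side leaves
\[
\int\!\!\int hu^\sigma\tau^s\le C R^{-\sigma'}\int\!\!\int h^{-1/(\sigma-1)}\textbf 1_{M_R}.
\]
The set $M_R$ is contained in a union of the sets $E_{\rho}$ of \eqref{s3:10} with $\rho=2^{k/\theta_1}R/2$, and at most about $3\theta_1+1$ values of $k$ are needed. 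Hence \eqref{s3:10}, applied on vertices and on edges separately, bounds the right-hand side by $CR^{-\sigma'}R^{\sigma'}=C$.

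Since $\tau\equiv1$ on $\{\tilde d^{\theta_1}+t^{\theta_2}\le R^{\theta_1}\}$, letting $R\to\infty$ and applying monotone convergence gives the claimed uniform bound. I expect the vertex estimate to be the main obstacle, because it relies on the nonlocal convexity inequality interacting with the support of $\tau$ and with assumption (v) of \eqref{s3:1}, which is exactly what Lemma \ref{s5:L1}(ii) packages. The covering of $M_R$ by finitely many $E_\rho$ is routine.
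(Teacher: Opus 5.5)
Your proposal is correct and follows essentially the paper's own argument: test with $\tau^s$, move the Laplacian onto $\tau^s$ via Lemma~\ref{s5:L5}, and bound the error terms using convexity at vertices, Lemma~\ref{s5:L1} and $\tilde E_R\subset M_R$; then absorb by Young, cover $M_R$ by finitely many $E_\rho$, and let $R\to\infty$ by monotone convergence. The paper likewise drops the $u_0$ term silently, relying on $u_0=u(\cdot,0)\ge0$. The only difference is minor: the paper observes that $-s(s-1)\tau^{s-2}(\tau')^2\le0$ and simply discards it, keeping the factor $\tau^{s-1}$ so that $s>\max\{2,\sigma/(\sigma-1)\}$ suffices, whereas your absolute-value bound leaves $\tau^{s-2}$ and forces the harmless enlargement $s\ge 2\sigma/(\sigma-1)$.
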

\begin{proof}
	Let $s>\max\{2,\sigma/(\sigma-1)\}$ be a fixed constant. Let $R\geq\max\{4j,1\}$. Since $u$ is a very weak nonnegative solution to \eqref{s1:1}, testing \eqref{s2:9} with $\tau^s(x,t)$ gives
	\begin{align}\label{s5:3}
		\nonumber&\int_0^\infty \int_{\mathcal{G}}h(x,t)u^\sigma(x,t)\tau^s(x,t)d\mu_{\mathcal{G}} dt\\
		\nonumber&=\int_0^\infty\le\{\sum_{x\in\mathcal{V}}\mu_{\mathcal{V}}(x)h(x,t)u^\sigma(x,t)\tau^s(x,t)\ri\}dt+\int_0^\infty\le\{\sum_{e\in\mathcal{E}}\int_{0}^{l_e}h_e(x,t)u_e^{\sigma}(x,t)\tau_e^s(x,t)dx \ri\}dt\\
		\nonumber&\leq -\int_0^\infty \int_{\mathcal{G}}\tau^s(x,t)\Delta_{\mathcal{G}} u(x,t)d\mu_{\mathcal{G}}dt-\int_0^\infty \int_{\mathcal{G}}u(x,t)\le(\tau^s\ri)_t(x,t)d\mu_{\mathcal{G}}dt\\
		\nonumber&\quad-\sum_{x\in\mathcal{V}}\mu_{\mathcal{V}}(x)u_0(x)\tau^s(x,0)-\sum_{e\in\mathcal{E}}\int_{0}^{l_e}u_{0,e}(x)\tau_e^s(x,0)dx\\
		\nonumber&\leq -\int_{0}^\infty \le\{\sum_{x\in\mathcal{V}}\mu_{\mathcal{V}}(x)u(x,t)(\tau^s)_t(x,t)\ri\}dt-\int_0^\infty\le\{\sum_{e\in\mathcal{E}}\int_{0}^{l_e}u_e(x,t)\le(\tau_{e}^s\ri)_t(x,t)dx \ri\}dt\\
		\nonumber&\quad-\int_{0}^\infty \le\{\sum_{x\in\mathcal{V}}\mu_{\mathcal{V}}(x)\tau^s(x,t)\Delta_{\mathcal{V}} u(x,t)\ri\}dt-\int_0^\infty\le\{\sum_{e\in\mathcal{E}}\int_{0}^{l_e}u_e^{\prime\prime}(x,t)\tau_e^s(x,t)dx \ri\}dt\\
		&:=J_1+J_2+J_3+J_4.
	\end{align}
	By Lemma \ref{s4:L1}, for any $R\geq 4j$, we obtain from \eqref{s5:2} that 
	\begin{align}\label{s5:4}
		\nonumber |J_1+J_2|&\leq s\int_{0}^\infty \le\{\sum_{x\in\mathcal{V}}\mu_{\mathcal{V}}(x)u(x,t)\tau^{s-1}(x,t)\le|\tau_t(x,t)\ri|\ri\}dt\\
		\nonumber&\quad+s\int_{0}^\infty \le\{ \sum_{e\in \mathcal{E}}\int_{0}^{l_e}u_e(x,t)\tau_{e}^{s-1}(x,t)\le|\tau_{e,t}(x,t)\ri|dx\ri\}dt\\
		\nonumber&\leq \frac{C}{R}\int_{0}^\infty \le\{\sum_{x\in\mathcal{V}}\mu_{\mathcal{V}}(x)u(x,t)\tau^{s-1}(x,t)\textbf{1}_{\tilde{E}_R}(x,t)\ri\}dt\\
		\nonumber&\quad+\frac{C}{R}\int_0^\infty\le\{\sum_{e\in \mathcal{E}}\int_{0}^{l_e} u_e(x,t)\tau_e^{s-1}(x,t)\textbf{1}_{\tilde{E}_R}(x,t)dx\ri\}dt\\
		\nonumber&\leq \frac{C}{R}\int_{0}^\infty \le\{\sum_{x\in\mathcal{V}}\mu_{\mathcal{V}}(x)u(x,t)\tau^{s-1}(x,t)\textbf{1}_{M_R}(x,t)\ri\}dt\\
		&\quad+\frac{C}{R}\int_0^\infty\le\{\sum_{e\in \mathcal{E}}\int_{0}^{l_e} u_e(x,t)\tau_e^{s-1}(x,t)\textbf{1}_{M_R}(x,t)dx\ri\}dt.
	\end{align}
	Since 
	\begin{equation*}
		(\tau_e^s(x,t))^{\prime\prime}=s(s-1)\tau_e^{s-2}(x,t)\le(\tau_e^\prime(x,t)\ri)^2+s\tau_e^{s-1}(x,t)\tau_e^{\prime\prime}(x,t),
	\end{equation*}
	and $s(s-1)\tau_e^{s-2}(x,t)\le(\tau_e^\prime(x,t)\ri)^2\leq0$ with $s>2$, we deduce that
	\begin{equation}\label{s5:9}
		-(\tau_e^s(x,t))^{\prime\prime}\leq-s\tau_e^{s-1}(x,t)\tau_e^{\prime\prime}(x,t),\quad\forall\ e\in \mathcal{E}, x\in (0,l_e), t\in[0,\infty).
	\end{equation}
	Noting that $r\mapsto r^s$ is convex on [0,1], one has
	$$a^s-b^s\geq sb^{s-1}(a-b)$$
	for all $a, b\in[0,1]$. It then follows that
	\begin{align}\label{s4:15}
		\nonumber-\Delta_{\mathcal{V}} \tau^s(x,t)&=-\frac{1}{\mu_{\mathcal{V}}(x)}\sum_{y\sim x}\o(x,y)\le(\tau^s(y,t)-\tau^s(x,t)\ri)\\
		\nonumber&\leq-\frac{1}{\mu_{\mathcal{V}}(x)}\sum_{y\sim x}\o(x,y)s\tau^{s-1}(x,t)\le(\tau(y,t)-\tau(x,t)\ri)\\
		&=-s\tau^{s-1}(x,t)\Delta_{\mathcal{V}} \tau(x,t).
	\end{align} 
	Using Lemmas \ref{s4:L1} and \ref{s5:L1},
	\[
	-\Delta_{\mathcal V}\tau(x,t)
	\leq
	\frac{C}{R}\mathbf 1_{M_R}(x,t),
	\qquad
	-\tau_e''(x,t)
	\leq
	\frac{C}{R}\mathbf 1_{M_R}(x,t),
	\]
	for all $R\ge \max\{4j,1\}$. By Lemma \ref{s5:L5}, in view of \eqref{s5:9} and \eqref{s4:15}, we have 
	\begin{align}\label{s5:11}
		\nonumber &J_3+J_4\\
		\nonumber&=-\int_{0}^\infty \le\{\sum_{x\in\mathcal{V}}\mu_{\mathcal{V}}(x)u(x,t)\Delta_{\mathcal{V}} \tau^s(x,t)\ri\}dt-\int_0^\infty\left\{\sum_{e\in\mathcal E}\int_0^{l_e}u_e(x,t)(\tau_e^s(x,t))''dx\right\}dt\\
		\nonumber&\leq -\int_{0}^\infty \le\{\sum_{x\in\mathcal{V}}\mu_{\mathcal{V}}(x)u(x,t)s\tau^{s-1}(x,t)\Delta_{\mathcal{V}} \tau(x,t)\ri\}dt-\int_0^\infty\le\{\sum_{e\in\mathcal E}\int_0^{l_e}u_e(x,t)s\tau_e^{s-1}(x,t)\tau_e^{\prime\prime}(x,t)dx \ri\}dt\\
		&\leq \frac{C}{R}\int_{0}^\infty \le\{\sum_{x\in\mathcal{V}}\mu_{\mathcal{V}}(x)u(x,t)\tau^{s-1}(x,t)\textbf{1}_{M_R}(x,t)\ri\}dt+\frac{C}{R}\int_0^\infty\le\{\sum_{e\in\mathcal{E}}\int_{0}^{l_e}u_e(x,t)\tau_e^{s-1}(x,t)\textbf{1}_{M_R}(x,t)dx \ri\}dt.
	\end{align}
	Combining \eqref{s5:4} and \eqref{s5:11}, and using Young's inequality, for any $\epsilon>0$, we have
	\begin{align}\label{s5:6}
		\nonumber&J_1+J_2+J_3+J_4\\
		\nonumber&\leq  \frac{C}{R}\int_{0}^\infty \le\{\sum_{x\in\mathcal{V}}\mu_{\mathcal{V}}(x)u(x,t)\tau^{s-1}(x,t)\textbf{1}_{M_R}(x,t)dt\ri\}dt+\frac{C}{R}\int_0^\infty\le\{\sum_{e\in \mathcal{E}}\int_{0}^{l_e} u_e(x,t)\tau_e^{s-1}(x,t)\textbf{1}_{M_R}(x,t)dx\ri\}dt\\
		\nonumber&\leq \epsilon \int_0^\infty \le\{\sum_{x\in\mathcal{V}}\mu_{\mathcal{V}}(x) h(x,t)\tau^{s}(x,t)u^\sigma(x,t)\textbf{1}_{M_R}(x,t)\ri\}dt+\frac{C}{R^{\frac{\sigma}{\sigma-1}}}\int_0^\infty \le\{\sum_{x\in\mathcal{V}}\mu_{\mathcal{V}}(x) h^{-\frac{1}{\sigma-1}}(x,t)\textbf{1}_{M_R}(x,t)\ri\}dt\\
		\nonumber&\quad+\epsilon\int_0^\infty \le\{\sum_{e\in \mathcal{E}}\int_{0}^{l_e} h_e(x,t)\tau_e^{s}(x,t)u^\sigma_e(x,t)\textbf{1}_{M_R}(x,t)dx\ri\}dt+\frac{C}{R^{\frac{\sigma}{\sigma-1}}}\int_0^\infty \le\{\sum_{e\in \mathcal{E}}\int_{0}^{l_e} h_e^{-\frac{1}{\sigma-1}}(x,t)\textbf{1}_{M_R}(x,t)dx\ri\}dt\\
		\nonumber&\leq \epsilon \int_0^\infty \int_{\mathcal{G}}h(x,t)u^\sigma(x,t)\tau^s(x,t)d\mu_{\mathcal{G}} dt+\frac{C}{R^{\frac{\sigma}{\sigma-1}}}\int_0^\infty \le\{\sum_{x\in\mathcal{V}}\mu_{\mathcal{V}}(x) h^{-\frac{1}{\sigma-1}}(x,t)\textbf{1}_{M_R}(x,t)\ri\}dt\\
		&\quad+\frac{C}{R^{\frac{\sigma}{\sigma-1}}}\int_0^\infty \le\{\sum_{e\in \mathcal{E}}\int_{0}^{l_e} h_e^{-\frac{1}{\sigma-1}}(x,t)\textbf{1}_{M_R}(x,t)dx\ri\}dt.
	\end{align}
	In view of \eqref{s5:3} and \eqref{s5:6},  for every $R\geq \max\{4j,1\}$, we get
	\begin{align*}
		&\int_0^\infty \int_{\mathcal{G}}h(x,t)u^\sigma(x,t)\tau^s(x,t)d\mu_{\mathcal{G}} dt\\
		\nonumber&\leq \epsilon\int_0^\infty \int_{\mathcal{G}}h(x,t)u^\sigma(x,t)\tau^s(x,t)d\mu_{\mathcal{G}} dt+\frac{C}{R^{\frac{\sigma}{\sigma-1}}}\int_0^\infty \le\{\sum_{x\in\mathcal{V}}\mu_{\mathcal{V}}(x) h^{-\frac{1}{\sigma-1}}(x,t)\textbf{1}_{M_R}(x,t)\ri\}dt\\
		\nonumber&\quad +\frac{C}{R^{\frac{\sigma}{\sigma-1}}}\int_0^\infty \le\{\sum_{e\in \mathcal{E}}\int_{0}^{l_e} h_e^{-\frac{1}{\sigma-1}}(x,t)\textbf{1}_{M_R}(x,t)dx\ri\}dt.
	\end{align*}    
	Let $k\in\mathbb{N}$ be sufficiently large such that $k\geq3\theta_1-1$, then for each $R\geq \max\{4j,1\}$, 
	\begin{equation}\label{s5:7}
		M_R\subset\bigcup_{j=0}^k E_{2^{\frac{j}{\theta_1}-1}R}.
	\end{equation}
	where $E_R=\le\{(x,t)\in\mathcal{G}\times[0,\infty):R^{\theta_1}\leq d(x,x_0)^{\theta_1}+t^{\theta_2}\leq 2R^{\theta_1}\ri\}$.  By virtue of this decomposition, we deduce from \eqref{s3:10} that
	\begin{align}\label{s5:24}
		\nonumber&\int_0^\infty \int_{\mathcal{G}}h(x,t)u^\sigma(x,t)\tau^s(x,t)d\mu_{\mathcal{G}} dt\\
		\nonumber&\leq \epsilon \int_0^\infty \int_{\mathcal{G}}h(x,t)u^\sigma(x,t)\tau^s(x,t)d\mu_{\mathcal{G}} dt+\frac{C}{R^{\frac{\sigma}{\sigma-1}}}\sum_{j=0}^k\int_0^\infty \le\{\sum_{x\in\mathcal{V}}\mu_{\mathcal{V}}(x) h^{-\frac{1}{\sigma-1}}(x,t)\textbf{1}_{E_{2^{\frac{j}{\theta_1}-1}R}}(x,t)\ri\}dt\\
		\nonumber&\quad+\frac{C}{R^{\frac{\sigma}{\sigma-1}}}\sum_{j=0}^k\int_0^\infty \le\{\sum_{e\in \mathcal{E}}\int_{0}^{l_e} h_e^{-\frac{1}{\sigma-1}}(x,t)\textbf{1}_{E_{2^{\frac{j}{\theta_1}-1}R}}(x,t)dx\ri\}dt\\
		\nonumber&\leq \epsilon \int_0^\infty \int_{\mathcal{G}}h(x,t)u^\sigma(x,t)\tau^s(x,t)d\mu_{\mathcal{G}} dt+\frac{C}{R^{\frac{\sigma}{\sigma-1}}}\sum_{j=0}^k\le(2^{\frac{j}{\theta_1}-1}R\ri)^{\frac{\sigma}{\sigma-1}}\\
		&\leq \epsilon \int_0^\infty \int_{\mathcal{G}}h(x,t)u^\sigma(x,t)\tau^s(x,t)d\mu_{\mathcal{G}} dt+C.
	\end{align}
	Thus, for sufficiently small $\epsilon>0$, we have
	\begin{align*}
		\int_0^\infty \int_{\mathcal{G}}h(x,t)u^\sigma(x,t)\tau^s(x,t)d\mu_{\mathcal{G}} dt&\leq C.
	\end{align*}
	Considering a bounded domain 
	\begin{equation}\label{s5:25}
	F_R=\le\{(x,t)\in\mathcal{G}\times[0,\infty):\tilde{d}(x,x_0)^{\theta_1}+t^{\theta_2}\leq R^{\theta_1}\ri\},
	\end{equation}
	noting that $\tau\equiv1$ on $F_R$, we thus get
	\begin{align*}
		\int_0^\infty \int_{\mathcal{G}}h(x,t)u^\sigma(x,t)\textbf{1}_{F_R}(x,t)d\mu_\mathcal{G}dt\leq 	\int_0^\infty \int_{\mathcal{G}}h(x,t)u^\sigma(x,t)\tau^s(x,t)d\mu_{\mathcal{G}} dt
		\leq C,
	\end{align*}
	where $C$ is a constant independent of $R$. Finally, letting  $R\ra\infty$, and applying the monotone convergence theorem, we conclude
	$$\int_0^\infty\int_\mathcal{G}h(x,t)u^\sigma(x,t) d\mu_\mathcal{G}dt\leq C,$$
	which completes the proof of this lemma.
\end{proof}

\begin{remark}
	The estimate in Lemma \ref{s5:L2} shows that
	\[
	h(x,t)u^\sigma(x,t)
	\in
	L^1(\mathcal G\times[0,\infty)).
	\]
	This integrability property will be crucial in the proof of
	Theorem \ref{s2:T3}, where it allows us to pass to the limit
	on the annular regions $M_R$ and conclude that
	\[
	\int_{M_R}
	h(x,t)u^\sigma(x,t)\,d\mu_{\mathcal G}dt
	\to0
	\quad\text{as }R\to\infty.
	\]
\end{remark}

Finally, we present the proofs of Theorem \ref{s2:T3} and Corollaries \ref{s2:C1} and \ref{s2:C4}. For the sake of brevity, overlapping parts are omitted here.\\

\noindent$\textbf{\emph{Proof of Theorem \ref{s2:T3}.}}$  We only indicate the main steps, since several arguments are analogous to those in the proof of Lemma \ref{s5:L2}. By \eqref{s5:6}, we obtain 
\begin{align}\label{s4:8}
	\nonumber\int_0^\infty \int_{\mathcal{G}}h(x,t)u^\sigma(x,t)\tau^s(x,t)d\mu_{\mathcal{G}} dt& \leq \frac{C}{R}\int_{0}^\infty \le\{\sum_{x\in\mathcal{V}}\mu_{\mathcal{V}}(x)u(x,t)\tau^{s-1}(x,t)\textbf{1}_{M_R}(x,t)dt\ri\}dt\\
	&\quad+\frac{C}{R}\int_0^\infty\le\{\sum_{e\in \mathcal{E}}\int_{0}^{l_e} u_e(x,t)\tau_e^{s-1}(x,t)\textbf{1}_{M_R}(x,t)dx\ri\}dt.
\end{align}
By applying H\"{o}lder's inequality, from \eqref{s3:10} and \eqref{s5:7}, we have
\begin{align*}
	&\frac{C}{R}\int_{0}^\infty \le\{\sum_{x\in\mathcal{V}}\mu_{\mathcal{V}}(x)u(x,t)\tau^{s-1}(x,t)\textbf{1}_{M_R}(x,t)dt\ri\}dt\\
	&\leq\frac{C}{R}\le(\int_0^\infty \le\{\sum_{x\in\mathcal{V}}\mu_{\mathcal{V}}(x) h(x,t)\tau^{s}(x,t)u^\sigma(x,t)\textbf{1}_{M_R}(x,t)\ri\}dt\ri)^{\frac{1}{\sigma}}\le(\int_0^\infty \le\{\sum_{x\in\mathcal{V}}\mu_{\mathcal{V}}(x) h^{-\frac{1}{\sigma-1}}(x,t)\textbf{1}_{M_R}(x,t)\ri\}dt\ri)^{\frac{\sigma-1}{\sigma}}\\
	&\leq\frac{C}{R}\le(\int_0^\infty \le\{\sum_{x\in\mathcal{V}}\mu_{\mathcal{V}}(x) h(x,t)u^\sigma(x,t)\textbf{1}_{M_R}(x,t)\ri\}dt\ri)^{\frac{1}{\sigma}}\le(\sum_{j=0}^k\le(2^{\frac{j}{\theta_1}-1}R\ri)^{\frac{\sigma}{\sigma-1}}\ri)^{\frac{\sigma-1}{\sigma}}\\
	&\leq C\le(\int_0^\infty \le\{\sum_{x\in\mathcal{V}}\mu_{\mathcal{V}}(x) h(x,t)u^\sigma(x,t)\textbf{1}_{M_R}(x,t)\ri\}dt\ri)^{\frac{1}{\sigma}}.
\end{align*}
Similarly, 
\begin{align*}
	&\frac{C}{R}\int_0^\infty\le\{\sum_{e\in \mathcal{E}}\int_{0}^{l_e}u_e(x,t)\tau_e^{s-1}(x,t)\textbf{1}_{M_R}(x,t) dx\ri\}dt\\
	&\leq \frac{C}{R} \sum_{e\in \mathcal{E}}\int_{0}^{l_e}\le\{\le(\int_0^\infty h_e(x,t)\tau_e^{s}(x,t)u_e^\sigma(x,t)\textbf{1}_{M_R}(x,t)dt\ri)^{\frac{1}{\sigma}}\le(\int_0^\infty 	\tau_e^{s-\frac{\sigma-1}{\sigma}}(x,t)h_e^{-\frac{1}{\sigma-1}}(x,t)\textbf{1}_{M_R}(x,t)dt\ri)^{\frac{\sigma-1}{\sigma}}\ri\}dx\\
	&\leq \frac{C}{R} \sum_{e\in\mathcal{E}}\le(\int_{0}^{l_e}\int_0^\infty h_e(x,t)\tau_e^{s}(x,t)u_e^\sigma(x,t)\textbf{1}_{M_R}(x,t)dt dx\ri)^\frac{1}{\sigma}\le(\int_{0}^{l_e}\int_0^\infty\tau_e^{s-\frac{\sigma-1}{\sigma}}(x,t)h_e^{-\frac{1}{\sigma-1}}(x,t)\textbf{1}_{M_R}(x,t)dtdx\ri)^\frac{\sigma-1}{\sigma}\\
	&\leq \frac{C}{R} \le\{\sum_{e\in\mathcal{E}}\int_{0}^{l_e}\le(\int_0^\infty h_e(x,t)\tau_e^{s}(x,t)u_e^\sigma(x,t)\textbf{1}_{M_R}(x,t)dt\ri) dx\ri\}^\frac{1}{\sigma}\\
	&\quad\cdot\le\{\sum_{e\in\mathcal{E}}\int_{0}^{l_e}\le(\int_0^\infty\tau_e^{s-\frac{\sigma-1}{\sigma}}(x,t)h_e^{-\frac{1}{\sigma-1}}(x,t)\textbf{1}_{M_R}(x,t)dt\ri)dx\ri\}^\frac{\sigma-1}{\sigma}\\
	&\leq \frac{C}{R} \le\{\int_0^\infty\le( \sum_{e\in\mathcal{E}}\int_{0}^{l_e}h_e(x,t)u_e^\sigma(x,t)\textbf{1}_{M_R}(x,t)dx\ri) dt\ri\}^\frac{1}{\sigma}\le\{\int_0^\infty\le(\sum_{e\in\mathcal{E}}\int_{0}^{l_e} h_e^{-\frac{1}{\sigma-1}}(x,t)\textbf{1}_{M_R}(x,t)dx\ri)dt\ri\}^\frac{\sigma-1}{\sigma}\\
	&\leq \frac{C}{R} \le\{\int_0^\infty\le( \sum_{e\in\mathcal{E}}\int_{0}^{l_e}h_e(x,t)u_e^\sigma(x,t)\textbf{1}_{M_R}(x,t)dx\ri) dt\ri\}^\frac{1}{\sigma}\le(\sum_{j=0}^k\le(2^{\frac{j}{\theta_1}-1}R\ri)^{\frac{\sigma}{\sigma-1}}\ri)^{\frac{\sigma-1}{\sigma}}\\
	& \leq C\le(\int_0^\infty\le( \sum_{e\in\mathcal{E}}\int_{0}^{l_e}h_e(x,t)u_e^\sigma(x,t)\textbf{1}_{M_R}(x,t)dx\ri) dt\ri)^\frac{1}{\sigma}.
\end{align*}
It then follows from these estimates and \eqref{s4:8} that
\begin{align}\label{s4:11}
	\nonumber\int_0^\infty \int_{\mathcal{G}}h(x,t)u^\sigma(x,t)\textbf{1}_{F_R}(x,t)d\mu_\mathcal{G}dt&
	\nonumber\leq C\le(\int_0^\infty \le\{\sum_{x\in\mathcal{V}}\mu_{\mathcal{V}}(x) h(x,t)u^\sigma(x,t)\textbf{1}_{M_R}(x,t)\ri\}dt\ri)^{\frac{1}{\sigma}}\\
	&\quad+C\le(\int_0^\infty\le( \sum_{e\in\mathcal{E}}\int_{0}^{l_e}h_e(x,t)u_e^\sigma(x,t)\textbf{1}_{M_R}(x,t)dx\ri) dt\ri)^{\frac{1}{\sigma}}.
\end{align}
By Lemma \ref{s5:L2},
\[
\int_0^\infty
\left\{
\sum_{x\in\mathcal V}
\mu_{\mathcal V}(x)
h(x,t)u^\sigma(x,t)
\right\}dt<\infty,\quad
\int_0^\infty
\left\{
\sum_{e\in\mathcal E}
\int_0^{l_e}
h_e(x,t)u_e^\sigma(x,t)dx
\right\}dt<\infty.
\]
Since
$M_R=\left\{R^{\theta_1}\leq\tilde d(x,x_0)^{\theta_1}+t^{\theta_2}\leq2R^{\theta_1}\right\}$,
for every fixed $(x,t)\in\mathcal G\times(0,\infty)$, one has $\mathbf 1_{M_R}(x,t)\to0$ as $R\to\infty$.
Moreover,
\[
0\leq
h(x,t)u^\sigma(x,t)\mathbf 1_{M_R}(x,t)
\leq
h(x,t)u^\sigma(x,t),
\]
Hence, by the dominated convergence theorem,
\[
\int_0^\infty \le\{\sum_{x\in\mathcal{V}}\mu_{\mathcal{V}}(x) h(x,t)u^\sigma(x,t)\textbf{1}_{M_R}(x,t)\ri\}dt
\to0, \quad \int_0^\infty\le( \sum_{e\in\mathcal{E}}\int_{0}^{l_e}h_e(x,t)u_e^\sigma(x,t)\textbf{1}_{M_R}(x,t)dx\ri) dt
\to0,
\quad R\to\infty.
\]
Finally, as $F_R\uparrow\mathcal{G}\times(0,\infty)$, it follows from \eqref{s4:11} and the monotone convergence theorem that
$$\int_0^\infty \int_{\mathcal{G}}h(x,t)u^\sigma(x,t)d\mu_\mathcal{G}dt\leq0.$$
Since $h>0$ and $u\geq0$, it follows that $u(x,t)\equiv0$ for all $(x,t)\in\mathcal{G}\times[0,\infty)$. This completes the proof. $\hfill\Box$\\

\noindent$\textbf{\emph{Proof of Corollary \ref{s2:C1}.}}$  (i) Setting $\theta_1=\theta_2=2$,  we see that the set $E_R$ satisfies
$$E_R\subset B_{\sqrt{2}R}\times[0,\sqrt{2}R],$$
since $d(x,x_0)^2+t^2\leq 2R^2$ for $(x,t)\in E_R$. Hence, for all sufficiently large $R\geq \max\{4j,1\}$, by \eqref{s3:9}, we have
\begin{align*}
	\int_0^\infty \le\{\sum_{x\in\mathcal{V}}\mu_{\mathcal{V}}(x) h^{-\frac{1}{\sigma-1}}(x,t)\textbf{1}_{E_R}(x,t)\ri\}dt&\leq\le(\int_0^{\sqrt{2}R}f^{-\frac{1}{\sigma-1}}(t)dt\ri)\le(\sum_{x\in\mathcal{V} \cap B_{\sqrt{2}R}}\mu_{\mathcal{V}}(x)g^{-\frac{1}{\sigma-1}}(x)\ri)\\
	&\leq C(\sqrt{2}R)^{\xi_1}(\sqrt{2}R)^{\xi_2}\\
	&\leq CR^{\frac{\sigma}{\sigma-1}},
\end{align*}
and
\begin{align*}
	\int_0^\infty\le\{\sum_{e\in \mathcal{E}}\int_{0}^{l_e}h_e^{-\frac{1}{\sigma-1}}(x,t)\textbf{1}_{E_R}(x,t)dx\ri\}dt&\leq 	\int_0^\infty\le\{\sum_{e\in \mathcal{E}}\int_{0}^{l_e}f_e^{-\frac{1}{\sigma-1}}(t)g_e^{-\frac{1}{\sigma-1}}(x)\textbf{1}_{E_R}(x,t)dx\ri\}dt\\
	&\leq\le(\int_0^{\sqrt{2}R}f_e^{-\frac{1}{\sigma-1}}(t)dt\ri)\le(\sum_{e\in \mathcal{E}\cap B_{\sqrt{2}R}}\int_{0}^{l_e}g_e^{-\frac{1}{\sigma-1}}(x)dx\ri)\\
	&\leq C(\sqrt{2}R)^{\xi_1}(\sqrt{2}R)^{\xi_2}\\
	&\leq CR^{\frac{\sigma}{\sigma-1}},
\end{align*}
where $C$ is a constant independent of $R$, and we have used $\xi_1+\xi_2\leq \sigma/(\sigma-1)$. Hence, by Theorem \ref{s2:T3}, we deduce that $u\equiv0$.

(ii) By choosing $f\equiv1$ and noting condition \eqref{s3:14}, we set $\xi_1=1$ and $\xi_2=1/(\sigma-1)$, so that $\xi_1+\xi_2=\sigma/(\sigma-1)$. The conclusion then follows immediately from part (i). $\hfill\Box$\\

\noindent$\textbf{\emph{Proof of Corollary \ref{s2:C4}.}}$ This is a consequence of part (ii) in Corollary \ref{s2:C1}, obtained by setting $g\equiv1$. $\hfill\Box$

\subsection{Nonexistence for sign-changing global solutions}\label{3.3}

Fix $x_0\in\mathcal{V}$ and let $R\geq R_0$, where $R_0:=\max\{2j,1\}$. Let 
$$\psi:\le[-\frac{j}{R}, \infty\ri)\to(0,\infty)$$
be a function satisfying 
\begin{itemize}
	\item $\psi\in C^2([-j/R, \infty))$;
	\item $\psi\equiv1$ on $[-j/R,1]$ and $\psi(r)=e^{-\delta r}$ on $[2,\infty)$ for some  $\delta>0$;
	\item $\psi^\prime(x)\leq0$ for all $x\in [-j/R, \infty)$.
\end{itemize}
Then there exist constants $C_1>0$, $C_2 > 0$, independent of $R$, such that
\begin{equation}\label{s4:17}
	0<C_2e^{-\delta r}\leq\psi(r)\leq C_1e^{-\delta r},\quad\forall\ r\in \le[-\frac{j}{R}, \infty\ri),
\end{equation}
and
\begin{equation}\label{s4:18}
	|\psi^\prime(r)|\leq C_1 e^{-\delta r},\quad|\psi^{\prime\prime}(r)|\leq C_1e^{-\delta r},\quad\forall\ r\in \le[-\frac{j}{R}, \infty\ri).
\end{equation}

Next, we introduce the test function
\begin{equation}\label{s5:17}
	\Phi(x,t):=\phi^s\le(\frac{t}{R}\ri)\psi\le(\frac{\tilde{d}(x,x_0)-j}{R}\ri),
\end{equation}
where $\phi\in C^2([0,\infty))$ is the cut-off function defined in \eqref{s4:6} and $s>\sigma/(\sigma-1)$. 
Let 
$$\tilde{B}_{R+j}^c=\{x\in\mathcal{G}:\tilde{d}(x,x_0)\geq R+j\},\quad B_R^c=\{x\in\mathcal{G}:d(x,x_0)\geq R\}.$$
The following lemma provides the key estimates for $\phi$.

\begin{lemma}\label{s5:L3}
	For any $R\geq R_0$, there exists a constant $C>0$ such that
	
	(i)	For each edge $e\in\mathcal{E}$, all $x\in (0,l_e)$, and all $t\in[0,\infty)$, it holds
	\begin{equation}\label{s4:24}
		|\Phi_e^{\prime\prime}(x,t)|\leq \frac{C}{R}\phi^s\le(\frac{t}{R}\ri)e^{\frac{-\delta \tilde{d}(x,x_0)}{R}}\textbf{1}_{\tilde{B}_{R+j}^c}(x).
	\end{equation}
	
	(ii) For any $x\in\mathcal{V}$ and $t\in[0,\infty)$, it holds
	\begin{equation}\label{s4:25}
		\le|\Delta_{\mathcal{V}}\Phi(x,t)\ri|\leq \frac{C}{R}\phi^s\le(\frac{t}{R}\ri)e^{\frac{-\delta d(x,x_0)}{R}}\textbf{1}_{B_{R}^c}(x).
	\end{equation}
	
	(iii) For any $(x,t)\in\mathcal{G}\times[0,\infty)$, it holds
	\begin{equation}\label{s4:26}
		|\Phi_{t}(x,t)|\leq \frac{C}{R}\phi^{s-1}\le(\frac{t}{R}\ri)e^{\frac{-\delta d(x,x_0)}{R}}\textbf{1}_{Q_R}(x,t),
	\end{equation}
	where  $Q_R:=\mathcal{G}\times[R,2R]$.
\end{lemma}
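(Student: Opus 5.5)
The plan is to differentiate $\Phi(x,t)=\phi^s(t/R)\,\psi\bigl((\tilde d(x,x_0)-j)/R\bigr)$ directly with the chain rule, treating the spatial and temporal factors separately. The support information comes from two facts: $\psi\equiv1$ on $[-j/R,1]$, and $\phi$ is constant on $[0,1]$ and on $[2,\infty)$. The exponential bounds come from \eqref{s4:17}, \eqref{s4:18} together with the comparison \eqref{s4:16} between $d$ and $\tilde d$.

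\emph{Part (i).} On an edge $e$ we write
\[
\Phi_e''=\phi^s(t/R)\Bigl[R^{-2}\psi''(\cdot)\,(\tilde d_e')^2+R^{-1}\psi'(\cdot)\,\tilde d_e''\Bigr].
\]
By Proposition \ref{s4:P1}(ii), $|\tilde d_e'|$ and $|\tilde d_e''|$ are bounded. By \eqref{s4:18}, $|\psi'|$ and $|\psi''|$ are bounded by $C_1e^{-\delta(\tilde d-j)/R}$, and since $j/R\le 1/2$ this is at most $Ce^{-\delta\tilde d/R}$. Using $R\ge1$ gives $R^{-2}\le R^{-1}$. Finally, $\psi'$ and $\psi''$ vanish whenever $(\tilde d-j)/R\le1$, that is, whenever $\tilde d<R+j$. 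This yields the indicator $\mathbf 1_{\tilde B^c_{R+j}}$. If $\psi\in C^2$ with $\psi\equiv1$ on $[-j/R,1]$, the derivatives also vanish at the endpoint $\tilde d=R+j$, so the closed/open distinction does not matter.

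\emph{Part (iii).} Here $\Phi_t=sR^{-1}\phi^{s-1}(t/R)\,\phi'(t/R)\,\psi(\cdot)$. The factor $\phi'(t/R)$ is bounded and supported in $t\in[R,2R]$, which gives $\mathbf 1_{Q_R}$. For the spatial factor, \eqref{s4:17} gives $\psi\le C_1e^{-\delta(\tilde d-j)/R}$. Since $\tilde d\ge d-j$, this is at most $C_1e^{2\delta j/R}e^{-\delta d/R}$, and with $j/R\le1/2$ we get $\le Ce^{-\delta d/R}$.

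\emph{Part (ii).} This is the main obstacle, since the vertex Laplacian is a finite difference rather than a derivative. We write
\[
\Delta_{\mathcal V}\Phi(x,t)=\phi^s(t/R)\,\frac{1}{\mu_{\mathcal V}(x)}\sum_{y\sim x}\omega(x,y)\bigl[\psi(a_y)-\psi(a_x)\bigr],\qquad a_z=\frac{d(z,x_0)-j}{R},
\]
using that $\tilde d=d$ at vertices.

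\emph{Bounding each difference.} For neighbours $y\sim x$ we have $|d(y,x_0)-d(x,x_0)|\le l_{xy}\le j$, so $|a_y-a_x|\le j/R$. The mean value theorem and \eqref{s4:18} then give
\[
|\psi(a_y)-\psi(a_x)|\le \frac{j}{R}\sup_{\xi}|\psi'(\xi)|\le \frac{C}{R}\,e^{-\delta d(x,x_0)/R}\,e^{2\delta j/R},
\]
where $\xi$ lies between $a_x$ and $a_y$, so $\xi\ge (d(x,x_0)-2j)/R$. Since $j/R\le1/2$, the last factor is bounded. Summing over neighbours and using \eqref{s3:1}(v), $\sum_{y\sim x}\omega(x,y)\le C\mu_{\mathcal V}(x)$, gives the bound $\frac{C}{R}\phi^s e^{-\delta d/R}$.

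\emph{Support.} If $d(x,x_0)<R$, then every neighbour $y$ satisfies $d(y,x_0)<R+j$. Hence $a_x$ and $a_y$ both lie in $[-j/R,1)$, where $\psi\equiv1$, and all differences vanish. This gives $\mathbf 1_{B_R^c}$. It also uses $a_z\ge -j/R$, which holds because $d\ge0$, so the domain of $\psi$ is respected.

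The points needing care are therefore the uniform constants when passing from $\tilde d$ to $d$ and back, which is handled by $R\ge R_0\ge 2j$, and the mean value step in part (ii). That step relies on the uniform edge-length bound $j$ together with the weight condition \eqref{s3:1}(v).
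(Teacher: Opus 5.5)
Your proposal is correct and follows the paper's proof. Parts (i) and (iii) are the same chain-rule computations, using Proposition \ref{s4:P1}, \eqref{s4:17}--\eqref{s4:18}, \eqref{s4:16} and $j/R\le 1/2$ to absorb the shifts by $j$. Your support arguments also match the paper's: $\psi',\psi''$ vanish on $\tilde B_{R+j}$, and $\psi\equiv 1$ at $x$ and at all its neighbours when $d(x,x_0)<R$.

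The only difference is in (ii). You bound each difference $\psi(a_y)-\psi(a_x)$ by a first-order mean value estimate. The paper instead expands to second order around $a_x$, producing the term $\frac{1}{R}\psi'(a_x)\Delta_{\mathcal V}d(x,x_0)$ plus a $\psi''$ remainder. Since only an absolute $O(1/R)$ bound is needed, your version is slightly simpler. It also avoids a looseness in the paper, which invokes the one-sided bound $\Delta_{\mathcal V}d\le Cj$ where the two-sided bound $|\Delta_{\mathcal V}d|\le Cj$ is what is required; that two-sided bound does hold, since $|d(y,x_0)-d(x,x_0)|\le j$.
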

\begin{proof}
	(i) For each $e\in\mathcal{E}$, noting that 
	$$\psi_e^{\prime}\le(\frac{\tilde{d}(x,x_0)-j}{R}\ri)=\psi_e^{\prime\prime}\le(\frac{\tilde{d}(x,x_0)-j}{R}\ri)=0,$$
	 for every $x\in\tilde{B}_{R+j}$, by Proposition \ref{s4:P1},  we obtain that for $R\geq R_0$
	\begin{align*}
		\le|\Phi_e^{\prime\prime}(x,t)\ri|&=\le|\phi^s\le(\frac{t}{R}\ri)\psi_e^{\prime\prime}\le(\frac{\tilde{d}(x,x_0)-j}{R}\ri)\frac{1}{R^2}\le(\tilde{d}_e^{\prime}(x,x_0)\ri)^2+\phi^s\le(\frac{t}{R}\ri)\psi_e^\prime\le(\frac{\tilde{d}(x,x_0)-j}{R}\ri)\frac{1}{R}\tilde{d}_e^{\prime\prime}(x,x_0)\ri|\\
		&\leq\frac{C}{R^2}\phi^s\le(\frac{t}{R}\ri)e^{-\delta\frac{\tilde{d}(x,x_0)-j}{R}}\textbf{1}_{\tilde{B}_{R+j}^c}(x)+\frac{C}{R}\phi^s\le(\frac{t}{R}\ri)e^{-\delta\frac{\tilde{d}(x,x_0)-j}{R}}\textbf{1}_{\tilde{B}_{R+j}^c}(x)\\
		&\leq \frac{C}{R}\phi^s\le(\frac{t}{R}\ri)e^{\frac{-\delta \tilde{d}(x,x_0)}{R}}\textbf{1}_{\tilde{B}_{R+j}^c}(x), \quad\forall\ x\in(0,l_e),\ t\in[0,\infty).
	\end{align*}
	
	(ii) For each $x\in\mathcal{V}$, there exists some $\xi$ such that
	\begin{align*}
		\Delta_{\mathcal{V}}\Phi(x,t)&=\frac{1}{\mu_{\mathcal{V}}(x)}\sum_{y\sim x}\o(x,y)\le(\Phi(y,t)-\Phi(x,t)\ri)\\
		&=\frac{1}{\mu_{\mathcal{V}}(x)}\sum_{y\sim x}\o(x,y)\phi^s\le(\frac{t}{R}\ri)\le(\psi\le(\frac{\tilde{d}(y,x_0)-j}{R}\ri)-\psi\le(\frac{\tilde{d}(x,x_0)-j}{R}\ri)\ri)\\
		&=\frac{1}{\mu_{\mathcal{V}}(x)}\sum_{y\sim x}\phi^s\le(\frac{t}{R}\ri)\o(x,y)\le(\psi\le(\frac{d(y,x_0)-j}{R}\ri)-\psi\le(\frac{d(x,x_0)-j}{R}\ri)\ri)\\
		&=\frac{1}{\mu_{\mathcal{V}}(x)}\sum_{y\sim x}\o(x,y)\phi^s\le(\frac{t}{R}\ri)\psi^\prime\le(\frac{d(x,x_0)-j}{R}\ri)\le(\frac{d(y,x_0)-d(x,x_0)}{R}\ri)\\
		&\quad+\frac{1}{2\mu_{\mathcal{V}}(x)}\sum_{y\sim x}\o(x,y)\phi^s\le(\frac{t}{R}\ri)\psi^{\prime\prime}\le(\xi\ri)\le(\frac{d(y,x_0)-d(x,x_0)}{R}\ri)^2,\quad t\in[0,\infty).
	\end{align*}
	Here, $\xi$ between $(d(y,x_0)-j)/R$ and $(d(x,x_0)-j)/R$.  
	
	If $x\in B_R\cap\mathcal V$,
	then for every $y\sim x$,
	$$\frac{d(y,x_0)-j}{R}\leq1,\quad
	\frac{d(x,x_0)-j}{R}
	\leq1,$$
	since \(d(y,x_0)\leq d(x,x_0)+j\leq R+j\). Hence, $\xi\leq1$ for all $x\in B_R\cap\mathcal{V}$. In addition, for any $x\in B_R$, by \eqref{s4:16},
	$$\tilde{d}(x,x_0)\leq d(x,x_0)+j\leq R+j,$$
	thus $x\in \tilde{B}_{R+j}$. Noting that $\psi^\prime\equiv0$ and $\psi^{\prime\prime}\equiv0$ on $\tilde{B}_{R+j}$, we have $$\Delta_{\mathcal{V}}\Phi(x,t)\equiv0,$$ 
	for all $(x,t)\in B_R\times [0,\infty)$. 
	
	For $x\in B_R^c\cap \mathcal{V}$, due to $R\geq R_0$, we get
	\begin{equation*}
		\xi\geq \min\le\{\frac{d(y,x_0)-j}{R},\frac{d(x,x_0)-j}{R}\ri\}\geq \frac{d(x,x_0)-2j}{R}\geq\frac{d(x,x_0)}{R}-1\geq0,
	\end{equation*}
	hence
	\begin{equation}\label{s4:12}
		e^{-\delta\xi}\leq e^\delta e^{-\frac{\delta d(x,x_0)}{R}}.
	\end{equation}
	Since
	\begin{align*}
		\Delta_{\mathcal{V}}d(x,x_0)&=\frac{1}{\mu_{\mathcal{V}}(x)}\sum_{y\sim x}\o(x,y)\le(d(y,x_0)-d(x,x_0)\ri)\\
		\nonumber&\leq \frac{1}{\mu_{\mathcal{V}}(x)}\sum_{y\sim x}\o(x,y)d(x,y)\\
		&\leq \frac{1}{\mu_{\mathcal{V}}(x)}\sum_{y\sim x}\o(x,y)j\leq Cj,
	\end{align*}
	by \eqref{s4:18} and \eqref{s4:12}, we obtain for any $(x,t)\in\mathcal{V}\times [0,\infty)$,
	\begin{align*}
		\le|\Delta_{\mathcal{V}}\Phi(x,t)\ri|&\leq \frac{1}{R}\le|\psi^\prime\le(\frac{d(x,x_0)-j}{R}\ri)\ri|\phi^s\le(\frac{t}{R}\ri)\frac{1}{\mu_{\mathcal{V}}(x)}\sum_{y\sim x}\o(x,y)\le(d(y,x_0)-d(x,x_0)\ri)\textbf{1}_{B_{R}^c}(x)\\
		&\quad+\frac{C}{2R^2}\phi^s\le(\frac{t}{R}\ri)\frac{1}{\mu_{\mathcal{V}}(x)}\sum_{y\sim x}\o(x,y)e^{-\frac{\delta d(x,x_0)}{R}}\le(d(y,x_0)-d(x,x_0)\ri)^2\textbf{1}_{B_{R}^c}(x)\\
		&\leq \frac{C}{R} \phi^s\le(\frac{t}{R}\ri)e^{-\delta\frac{d(x,x_0)-j}{R}} \Delta_\mathcal{V}d(x,x_0)\textbf{1}_{B_{R}^c}(x)\\
		\nonumber&\quad+\frac{C}{2R^2}\phi^s\le(\frac{t}{R}\ri)e^{-\frac{\delta d(x,x_0)}{R}}\textbf{1}_{B_{R}^c}(x)\frac{1}{\mu_{\mathcal{V}}(x)}\sum_{y\sim x}\o(x,y)j^2\\
		&\leq\le[\frac{C}{R}e^{-\delta\frac{ d(x,x_0)}{R}}+\frac{C }{2R^2}e^{-\delta\frac{d(x,x_0)}{R}}\ri]\phi^s\le(\frac{t}{R}\ri)\textbf{1}_{B_{R}^c}(x)\\
		&\leq\frac{C}{R}\phi^s\le(\frac{t}{R}\ri)e^{-\frac{\delta d(x,x_0)}{R}}\textbf{1}_{B_{R}^c}(x),
	\end{align*}
	where we have used \eqref{s3:1}-(v) and $|d(y,x_0)-d(x,x_0)|\leq d(x,y)\leq j$.  
	
	(iii) By \eqref{s4:16}, \eqref{s4:6} and \eqref{s4:17}, we compute
	\begin{align*}
		|\Phi_{t}(x,t)|&=\le|s\phi^{s-1}\le(\frac{t}{R}\ri)\phi^\prime\le(\frac{t}{R}\ri)\frac{1}{R}\psi\le(\frac{\tilde{d}(x,x_0)-j}{R}\ri)\ri|\\
		&\leq \frac{C}{R}\phi^{s-1}\le(\frac{t}{R}\ri)e^{\frac{-\delta \tilde{d}(x,x_0)+\delta j}{R}}\textbf{1}_{Q_R}(x,t)\\
		&\leq\frac{C}{R}\phi^{s-1}\le(\frac{t}{R}\ri)e^{\frac{-\delta d(x,x_0)+2\delta j}{R}}\textbf{1}_{Q_R}(x,t)\\
		&\leq \frac{C}{R}\phi^{s-1}\le(\frac{t}{R}\ri)e^{\frac{-\delta d(x,x_0)}{R}}\textbf{1}_{Q_R}(x,t),\quad\forall \ x\in\mathcal{G},\ t\in[0,\infty),
	\end{align*}
	where the factor $e^{2\delta j/R}$ has been absorbed into the constant $C$, since $R\geq R_0\geq 2j$.
	
	The proof of the lemma is complete.
\end{proof}

Although the test function \eqref{s5:17} is not compactly supported in the spatial variable, the integration by parts formula remains valid under the additional assumption $u\in L^1_{\rm loc}([0,\infty);X_\alpha)$. Since $\Phi(\cdot,t)=0$ for all $t\ge 2R$, all time integrals appearing below are effectively taken over the finite interval $[0,2R]$.

\begin{lemma}\label{s4:L2}
	Suppose that $u$ is a very weak solution to \eqref{s1:1}, and let $\Phi$ be as in \eqref{s5:17}. Assume $u\in L^1_{\text{loc}}([0,\infty); X_\alpha)$ with $\alpha=\delta/R>0$. Then we have
	\begin{align}\label{s4:20}
		\int_0^\infty\mathcal{L}_{\Delta_{\mathcal{G}}u}(\Phi)dt=\int_0^\infty\mathcal{L}_{\Delta_{\mathcal{G}}\Phi}(u)dt
	\end{align}
\end{lemma}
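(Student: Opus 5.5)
The plan is to mirror the proof of Lemma \ref{s5:L5}, splitting $\mathcal{L}_{\Delta_{\mathcal G}u}(\Phi)$ into its vertex part and its edge part. The new difficulty is that $\Phi(\cdot,t)$ is no longer compactly supported in space. I would therefore first run the argument on the finite balls $\tilde B_{N}$ and then let $N\to\infty$, using the exponential decay of $\psi$ together with $u\in L^1_{\rm loc}([0,\infty);X_\alpha)$ to kill the remainder terms. Since $\Phi(\cdot,t)\equiv0$ for $t\ge2R$, every time integral lives on $[0,2R]$, where $u(\cdot,t)\in X_\alpha$ is integrable in $t$.

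\textbf{Edge part.} Fix $t\in[0,2R]$ and $N\ge R+j$. On each edge $e$ inside $\tilde B_N$, I integrate by parts twice as in \eqref{s4:10}. At every vertex $x$, Proposition \ref{s4:P1}(i) gives $\tilde d_e'(x,x_0)=0$, hence $\Phi_e'(x,t)=0$, so $[\mathcal K(\Phi)](x,t)=0$. Together with $[\mathcal K(u)](x,t)=0$, all vertex boundary terms cancel exactly as for $B_1$ and $B_2$ in the proof of Lemma \ref{s5:L5}.

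Because $\tilde d$ restricted to an edge is either monotone or constant, it is better to truncate along whole edges rather than at cut points. So I sum over the finite set $\mathcal E_N$ of edges with both endpoints in $\tilde B_N$. Every boundary term then sits at a vertex and vanishes, and no cut-point terms appear. This gives
\[
\sum_{e\in\mathcal E_N}\int_0^{l_e}u_e''\Phi_e\,dx=\sum_{e\in\mathcal E_N}\int_0^{l_e}u_e\Phi_e''\,dx .
\]

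\textbf{Vertex part.} This is a symmetric sum over vertices. I would write $\sum_x\mu_{\mathcal V}(x)\Phi\Delta_{\mathcal V}u$ as $\frac12\sum_{x,y}\omega(x,y)(\ldots)$ and use symmetry of $\omega$. The rearrangement is legitimate once absolute convergence is verified.

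\textbf{Main obstacle: passing $N\to\infty$ (absolute convergence).} For the right-hand sides this is easy. Lemma \ref{s5:L3}(i)(ii) give $|\Phi_e''|,|\Delta_{\mathcal V}\Phi|\le \frac CR e^{-\delta d(x,x_0)/R}$, using \eqref{s4:16} to replace $\tilde d$ by $d$ at the cost of a constant. Hence $\int_0^{2R}\!\int_{\mathcal G}|u||\Delta_{\mathcal G}\Phi|\,d\mu_{\mathcal G}\,dt\le \frac CR\|u\|_{L^1(0,2R;X_\alpha)}<\infty$, and dominated convergence applies.

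For the left-hand sides the integrals involve $u_e''$ and $\Delta_{\mathcal V}u$, about which we know nothing quantitatively. My approach is to avoid needing their absolute convergence directly. The partial sums on the left equal the partial sums on the right up to the truncation, and the right side converges. So the left side converges as $N\to\infty$, and its limit is what defines $\int_0^\infty\mathcal L_{\Delta_{\mathcal G}u}(\Phi)\,dt$ in the weak formulation, interpreted via exhaustion by balls.

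For the vertex part I would also need
\[
\sum_x\sum_{y\sim x}\omega(x,y)|u(y)|\Phi(x)\le C\sum_y\mu_{\mathcal V}(y)|u(y)|e^{-\alpha d(y,x_0)}.
\]
This uses \eqref{s3:1}(v), symmetry of $\omega$, and $\Phi(x)\le Ce^{-\delta d(y,x_0)/R}$ for $y\sim x$, since $d(x,y)\le j$ and $R\ge2j$. It makes $\sum\mu_{\mathcal V}\Phi\Delta_{\mathcal V}u$ absolutely convergent, so the vertex rearrangement is justified.

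Combining the vertex and edge identities and integrating in $t$ over $[0,2R]$ then yields \eqref{s4:20}.
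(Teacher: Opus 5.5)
You handle the vertex part correctly, and your argument there is essentially the paper's: absolute convergence via \eqref{s3:1}-(v), $d(x,y)\le j$ and $R\ge 2j$, followed by the symmetric exchange of the double sum. The gap is in the edge part, specifically the claim that, after summing only over the finite set $\mathcal E_N$ of edges with both endpoints in $\tilde B_N$, ``every boundary term sits at a vertex and vanishes''.

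The endpoint terms $u\,\Phi_e'$ do vanish, since $\Phi_e'=0$ at every vertex by Proposition \ref{s4:P1}(i). The terms $u_e'\Phi_e$, however, regroup at a vertex $x$ into $\Phi(x,t)\sum_{e\ni x,\,e\in\mathcal E_N}\frac{du_e(x,t)}{dn}$. The Kirchhoff condition $[\mathcal K(u)](x,t)=0$ only says that the sum over the \emph{complete} star of $x$ vanishes. What you actually get is
\[
\sum_{e\in\mathcal E_N}\int_0^{l_e}u_e''\Phi_e\,dx=\sum_{e\in\mathcal E_N}\int_0^{l_e}u_e\Phi_e''\,dx-\sum_{x\in\mathcal V\cap\tilde B_N}\Phi(x,t)\sum_{e\ni x,\ e\notin\mathcal E_N}\frac{du_e(x,t)}{dn}.
\]
The last sum is generically nonzero at every vertex of $\tilde B_N$ that has an incident edge leaving $\tilde B_N$.

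In Lemma \ref{s5:L5} such terms were harmless, because the truncation boundary sits where $\tau^s$ and $(\tau^s)'$ vanish. Here $\Phi>0$ everywhere, so nothing annihilates them. They involve the values $u_e'$ at vertices, and $u\in L^1_{\rm loc}([0,\infty);X_\alpha)$ gives no control on these. The exponential decay of $\psi$ therefore cannot make them tend to zero as $N\to\infty$. Consequently ``the left partial sums equal the right partial sums'' is false, and your derivation of the convergence of the left side, and hence of \eqref{s4:20}, does not go through.

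The paper never creates a truncation boundary. It integrates by parts on all edges at once and collects the endpoint contributions vertex by vertex, so every vertex carries its full star. The boundary terms then become $-\sum_{x\in\mathcal V}u(x,t)[\mathcal K(\Phi)](x,t)+\sum_{x\in\mathcal V}\Phi(x,t)[\mathcal K(u)](x,t)=0$. The bound on $\sum_e\int_0^{l_e}|u_e||\Phi_e''|\,dx$ from \eqref{s4:24} and \eqref{s4:22} then gives integrability in $t$.

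If you want to keep an exhaustion argument, truncate smoothly rather than sharply; this is also a clean way to justify the vertex-by-vertex regrouping of an infinite series. Take $\eta_N=\phi(\tilde d(\cdot,x_0)/N)$ as in Lemma \ref{s4:L3}.

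First, $\eta_N\Phi$ is compactly supported and has vanishing edge derivatives at all vertices. The argument of Lemma \ref{s5:L5} therefore gives an exact identity with no remainder.

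Second, by Proposition \ref{s4:P1} and \eqref{s3:1}-(v), the commutators $2\eta_N'\Phi_e'+\eta_N''\Phi_e$ and $\frac{1}{\mu_{\mathcal V}(x)}\sum_{y\sim x}\omega(x,y)(\eta_N(y)-\eta_N(x))\Phi(y,t)$ are bounded by $\frac{C}{N}e^{-\alpha d(x,x_0)}$.

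Third, the right side therefore converges to $\int_0^\infty\mathcal L_{\Delta_{\mathcal G}\Phi}(u)\,dt$ by dominated convergence. The left side then converges to the same limit, which is exactly the exhaustion interpretation you were after.
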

\begin{proof}
	We first prove that
	$$\int_0^\infty\int_{\mathcal{G}}\Phi(x,t) \Delta_{\mathcal{V}}u(x,t) d\mu_{\mathcal{V}} dt=\int_0^\infty\int_{\mathcal{G}}u(x,t) \Delta_{\mathcal{V}}\Phi(x,t) d\mu_{\mathcal{V}} dt.$$
	Indeed, by \eqref{s4:16}, for all $(x,t)\in\mathcal{G}\times[0,\infty)$
	$$|\Phi(x,t)|\leq \le|\psi\le(\frac{\tilde{d}(x,x_0)-j}{R}\ri)\ri|\leq Ce^{-\alpha\tilde{d}(x,x_0)}\leq Ce^{-\alpha d(x,x_0)}.$$
	Thus, for any $t\in(0,2R)$, by \eqref{s3:1}-(v), 
	\begin{align*}
		\le|	\int_{\mathcal{G}}\Phi(x,t) \Delta_{\mathcal{V}}u(x,t) d\mu_{\mathcal{V}}\ri|&\leq\sum_{x\in\mathcal{V}}\sum_{y\sim x}\o(x,y)\le(|u(x,t)|+|u(y,t)|\ri)|\Phi(x,t)|\\
		&\leq \sum_{x\in\mathcal{V}}\sum_{y\sim x}\o(x,y)|u(x,t)||\Phi(x,t)|+\sum_{x\in\mathcal{V}}\sum_{y\sim x}\o(x,y)|u(y,t)||\Phi(x,t)|\\
		&\leq C \sum_{x\in\mathcal{V}}\mu_{\mathcal{V}}(x)|u(x,t)||\Phi(x,t)|+C\sum_{x\in\mathcal{V}}\sum_{y\sim x}\o(x,y)|u(y,t)|e^{-\alpha d(x,x_0)}\\
		&\leq C \sum_{x\in\mathcal{V}}\mu_{\mathcal{V}}(x)|u(x,t)|e^{-\alpha d(x,x_0)}+C\sum_{x\in\mathcal{V}}\sum_{y\sim x}\o(x,y)|u(y,t)|e^{-\alpha d(y,x_0)+\frac{\delta}{2}}\\
		&\leq C \sum_{x\in\mathcal{V}}\mu_{\mathcal{V}}(x)|u(x,t)|e^{-\alpha d(x,x_0)}+C e^{\frac{\delta}{2}} \sum_{y\in\mathcal{V}}\mu_{\mathcal{V}}(y)|u(y,t)|e^{-\alpha d(y,x_0)}\\
		&\leq C \sum_{x\in\mathcal{V}}\mu_{\mathcal{V}}(x)|u(x,t)|e^{-\alpha d(x,x_0)},
	\end{align*}
	where the factor $e^{\frac{\delta}{2}}$ follows from
	$d(x,y)\leq j$ and $R\geq 2j$. Moreover, the symmetry $\omega(x,y)=\omega(y,x)$ allows us to exchange the roles of $x$ and $y$ in the double summation. Since $u\in L^1_{\text{loc}}([0,\infty); X_\alpha)$, we have that
	\begin{align*}
	  \le|\int_0^\infty\int_{\mathcal{G}}\Phi(x,t) \Delta_{\mathcal{V}}u(x,t) d\mu_{\mathcal{V}} dt\ri|&\leq \int_0^{2R}\le|\int_{\mathcal{G}}\Phi(x,t) \Delta_{\mathcal{V}}u(x,t) d\mu_{\mathcal{V}}\ri| dt\\
	  &\leq C\int_0^{2R}\le\{\sum_{x\in\mathcal{V}}\mu_{\mathcal{V}}(x)|u(x,t)|e^{-\alpha d(x,x_0)}\ri\} dt<C.
	\end{align*}
	This implies that 
	$$\int_0^\infty\int_{\mathcal{G}}\Phi(x,t) \Delta_{\mathcal{V}}u(x,t) d\mu_{\mathcal{V}} dt$$
	converges absolutely, and moreover
	$$\int_0^\infty\le\{ \sum_{x\in\mathcal{V}}\sum_{y\sim x}\o(x,y)\le(|u(x,t)|+|u(y,t)|\ri)|\Phi(x,t)|\ri\}dt<\infty.$$
	Therefore, by Fubini's theorem, the order of integration and summation may be interchanged in the computations below.
	\begin{align*}
		&\int_0^\infty\int_{\mathcal{G}}\Phi(x,t) \Delta_{\mathcal{V}}u(x,t) d\mu_{\mathcal{V}} dt\\
		&=	\int_0^\infty \le\{\sum_{x\in\mathcal{V}}\sum_{y\sim x}\o(x,y)\le(u(y,t)-u(x,t)\ri)\Phi(x,t)\ri\}dt\\
		&=\int_0^\infty \le\{\sum_{x\in\mathcal{V}}\sum_{y\sim x}\o(x,y)u(y,t)\Phi(x,t)\ri\}dt-\int_0^\infty\le\{ \sum_{x\in\mathcal{V}}\sum_{y\sim x}\o(x,y)u(x,t)\Phi(x,t)\ri\}dt\\
		&=\int_0^\infty \le\{\sum_{y\in\mathcal{V}}\sum_{x\sim y}\o(x,y)u(x,t)\Phi(y,t)\ri\}dt-\int_0^\infty\le\{ \sum_{x\in\mathcal{V}}\sum_{y\sim x}\o(x,y)u(x,t)\Phi(x,t)\ri\}dt\\
		&= \int_0^\infty\le\{ \sum_{x\in\mathcal{V}}\sum_{y\sim x}\o(x,y)u(x,t)\le(\Phi(y,t)-\Phi(x,t)\ri)\ri\}dt\\
		&=\int_0^\infty\int_{\mathcal{G}}u(x,t) \Delta_{\mathcal{V}}\Phi(x,t) d\mu_{\mathcal{V}} dt.
	\end{align*}
	
	Next, in order to establish \eqref{s4:20}, it is enough to verify
	\begin{equation}\label{s4:21}
		\int_0^\infty \le\{\sum_{e\in\mathcal{E}}\int_{0}^{l_e}\Phi_e(x,t)u_{e}^{\prime\prime}(x,t)dx \ri\}dt=\int_0^\infty \le\{\sum_{e\in\mathcal{E}}\int_{0}^{l_e}u_e(x,t)\Phi_{e}^{\prime\prime}(x,t)dx \ri\}dt.
	\end{equation}
	By \eqref{s4:16}, the assumption $u\in L^1_{\rm loc}([0,\infty);X_\alpha)$, and the fact that $R\geq 2j$, we obtain 
	\begin{equation*}
		C\geq \int_0^{2R} \le\{\sum_{e\in\mathcal{E}}\int_{0}^{l_e}|u_e(x,t)|e^{-\alpha d(x,x_0)}dx\ri\}dt\geq \int_0^{2R} \le\{\sum_{e\in\mathcal{E}}\int_{0}^{l_e}|u_e(x,t)|e^{-\alpha \tilde{d}(x,x_0)}e^{-\frac{\delta}{2}}dx\ri\}dt,
	\end{equation*}
	where we used $\tilde d(x,x_0)\leq d(x,x_0)+j$ and $\alpha j=\delta j/R\leq \delta/2$. This implies that
	\begin{equation}\label{s4:22}
		\int_0^{2R} \le\{\sum_{e\in\mathcal{E}}\int_{0}^{l_e}|u_e(x,t)|e^{-\alpha \tilde{d}(x,x_0)}dx\ri\}dt\leq C.
	\end{equation}
	By the formula for integration by parts twice, for any $t\in(0,2R)$,
	\begin{align*}
		\nonumber	&\sum_{e\in\mathcal{E}}\int_{0}^{l_e}\Phi_e(x,t)u_{e}^{\prime\prime}(x,t)dx\\
		&= -\sum_{e\in{\mathcal{E}}}\int_{0}^{l_e} u_e^{\prime}(x,t)\Phi_e^{\prime}(x,t) dx+\sum_{e\in{\mathcal{E}}}\le(u_e^\prime(l_e,t)\Phi_e(l_e,t)-u_e^\prime(0,t)\Phi_e(0,t)\ri)\\
		\nonumber&=\sum_{e\in{\mathcal{E}}}\int_{0}^{l_e} u_e(x,t)\Phi_e^{\prime\prime}(x,t) dx-\sum_{e\in{\mathcal{E}}}\le(u_e(l_e,t)\Phi_e^\prime(l_e,t)-u_e(0,t)\Phi_e^\prime(0,t)\ri)\\
		\nonumber&\quad+\sum_{e\in{\mathcal{E}}}\le(u_e^\prime(l_e,t)\Phi_e(l_e,t)-u_e^\prime(0,t)\Phi_e(0,t)\ri)\\
		&=\sum_{e\in{\mathcal{E}}}\int_{0}^{l_e} u_e(x,t)\Phi_e^{\prime\prime}(x,t) dx-\sum_{x\in\mathcal{V}}u(x,t)[\mathcal{K}(\Phi)](x,t)+\sum_{x\in\mathcal{V}}\Phi(x,t)[\mathcal{K}(u)](x,t),
	\end{align*}
	where the last equality follows from the continuity condition at vertices and the definition of the Kirchhoff operator $\mathcal K$.
	By Definition \ref{s2:D1}, we know that for each $t\in(0,2R)$ and all $x\in\mathcal{V}$, $[\mathcal{K}(u)](x,t)=0$. We claim that $[\mathcal{K}(\Phi)](x,t)=0$. In fact, for all $x\in\mathcal{V}$, for every $e\ni x$ with $e\in\mathcal{E}$, it follows from $(i)$ of Proposition \ref{s4:P1} that $\tilde{d}_e^{\prime}(x,x_0)=0$. By chain rule, we have
	\begin{equation*}
		\Phi_e^\prime(x,t)=\phi^s\le(\frac{t}{R}\ri)\psi_e^\prime\le(\frac{\tilde{d}(x,x_0)-j}{R}\ri)\frac{1}{R}\tilde{d}_e^\prime(x,x_0)=0,\quad\forall\ x\in \mathcal{V},\ e\in\mathcal{E},\ t\in(0,2R).
	\end{equation*}
	Therefore,
	$$	[\mathcal{K}(\Phi)](x,t)=\sum_{e\ni x}\frac{d\Phi_e(x,t)}{dn}=0,$$
	which yields that
	\begin{equation}\label{s4:23}
		\sum_{e\in\mathcal{E}}\int_{0}^{l_e}\Phi_e(x,t)u_{e}^{\prime\prime}(x,t)dx=\sum_{e\in{\mathcal{E}}}\int_{0}^{l_e} u_e(x,t)\Phi_e^{\prime\prime}(x,t) dx.
	\end{equation}
	It then follows from \eqref{s4:24}, \eqref{s4:22} and \eqref{s4:23} that
	\begin{align*}
		\le|	\int_0^\infty \le\{\sum_{e\in\mathcal{E}}\int_{0}^{l_e}\Phi_e(x,t)u_{e}^{\prime\prime}(x,t)dx \ri\}dt\ri|&\leq \int_0^{2R} \le\{\sum_{e\in{\mathcal{E}}}\int_{0}^{l_e} |u_e(x,t)|\le|\Phi_e^{\prime\prime}(x,t) \ri|dx\ri\}dt\\
		&\leq \frac{C}{R} \int_0^{2R} \le\{\sum_{e\in\mathcal{E}}\int_{0}^{l_e}|u_e(x,t)|\phi^s\le(\frac{t}{R}\ri)e^{\frac{-\delta \tilde{d}(x,x_0)}{R}}\textbf{1}_{\tilde{B}_{R+j}^c}(x)dx\ri\}dt\\
		&\leq \frac{C}{R_0} \int_0^{2R} \le\{\sum_{e\in\mathcal{E}}\int_{0}^{l_e}|u_e(x,t)|e^{-\alpha\tilde{d}(x,x_0)}dx\ri\}dt\\
		&\leq C.
	\end{align*}
	Hence, 
	$$	\int_0^\infty \le\{\sum_{e\in\mathcal{E}}\int_{0}^{l_e}\Phi_e(x,t)u_{e}^{\prime\prime}(x,t)dx \ri\}dt$$
	converges absolutely. The same estimate also shows that
	\[
	\int_0^\infty
	\left|
	\sum_{e\in\mathcal E}
	\int_0^{l_e}
	u_e(x,t)\Phi_e''(x,t)dx
	\right|dt<\infty,
	\]
	so both sides of \eqref{s4:23} are integrable with respect to $t$.
	Therefore, integrating \eqref{s4:23} over $(0,\infty)$ yields \eqref{s4:21}.
\end{proof}

Assume that $u_0\in X_\alpha$ and that $u\in L^1_{\text{loc}}([0,\infty); X_\alpha)$ is a very weak solution of \eqref{s1:1}. For the function $\Phi$ given in \eqref{s5:17}, $\Phi(x,\cdot)\in C^1([0,\infty))$ for each $x\in\mathcal{G}$. Since 
$$\phi\le(\frac{t}{R}\ri)=0$$
for all $t\geq 2R$, it follows that
$$
\Phi(x,t)\equiv0,
\quad\forall\ (x,t)\in\mathcal G\times[2R,\infty).
$$
Moreover, by Lemma \ref{s5:L3} and \eqref{s4:16}, there exists a constant $C>0$ such that
\begin{equation}\label{s5:12}
	\begin{cases}
			|\Phi(x,t)|\leq Ce^{-\alpha d(x,x_0)},\quad |\Phi_t(x,t)|\leq Ce^{-\alpha d(x,x_0)}, \\
			|\Delta_{\mathcal{E}}\Phi(x,t)|\leq Ce^{-\alpha d(x,x_0)},\quad |\Delta_{\mathcal{V}}\Phi(x,t)|\leq Ce^{-\alpha d(x,x_0)},
	\end{cases}
\end{equation}
for all $(x,t)\in\mathcal{G}\times[0,\infty)$. 

Although $\Phi$ is not compactly supported with respect to the spatial variable $x$, it can still be employed as a test function in \eqref{s2:9}. 
\begin{lemma}\label{s4:L3}
	For any $R\geq R_0$, let $u$ be a very weak solution of \eqref{s1:1}, and let $\Phi$ be given in \eqref{s5:17}. Then $\Phi$ satisfies
		\begin{align}\label{s4:30}
		\nonumber&\int_0^\infty \int_{\mathcal{G}}u(x,t)\Phi_t(x,t)d\mu_{\mathcal{G}}dt+\int_0^\infty \int_{\mathcal{G}}\Phi(x,t)\Delta_{\mathcal{G}} u(x,t)d\mu_{\mathcal{G}}dt\\
		&\quad+\int_\mathcal{G}u_0(x)\Phi(x,0)d\mu_{\mathcal{G}}+\int_0^\infty \int_{\mathcal{G}}h(x,t)|u(x,t)|^\sigma\Phi(x,t)d\mu_{\mathcal{G}} dt\leq0.
	\end{align} 
\end{lemma}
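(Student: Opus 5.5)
The plan is to approximate $\Phi$ by compactly supported admissible test functions, apply \eqref{s2:9} to each of them, and pass to the limit using the exponential decay of $\Phi$ and the hypotheses $u\in L^1_{\rm loc}([0,\infty);X_\alpha)$, $u_0\in X_\alpha$.

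\emph{Construction of the cut-offs.} For $n\ge R$, set
\[
\Phi_n(x,t):=\Phi(x,t)\,\phi\left(\frac{\tilde d(x,x_0)}{n}\right),
\]
with $\phi$ the cut-off in \eqref{s4:6}. Each $\Phi_n$ has the following properties.
\begin{itemize}
\item It is nonnegative.
\item It is $C^1$ in $t$ and vanishes for $t\ge 2R$.
\item It is supported in $\overline{\tilde B_{2n}}\times[0,2R]$, which is compact by \eqref{s3:1}-(iv) and \eqref{s4:16}.
\item On every edge it is $C^2$, and $\Phi_n'$ vanishes at every vertex by Proposition \ref{s4:P1}(i).
\end{itemize}
So $\Phi_n$ is an admissible test function in \eqref{s2:9}. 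Plugging it in gives \eqref{s4:30} with $\Phi$ replaced by $\Phi_n$.

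\emph{Linear terms.} We need to let $n\to\infty$ in each term. The terms $\int u\,(\Phi_n)_t$ and $\int u_0\Phi_n(\cdot,0)$ are straightforward. We have $(\Phi_n)_t=\Phi_t\,\phi(\tilde d/n)$, and by \eqref{s5:12},
\[
|\Phi_t|,\ |\Phi|\le Ce^{-\alpha d(x,x_0)}.
\]
Since $u\in L^1(0,2R;X_\alpha)$ and $u_0\in X_\alpha$, dominated convergence applies.

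\emph{Laplacian term.} Here we first rewrite $\int\Phi_n\Delta_{\mathcal G}u$ as $\int u\,\Delta_{\mathcal G}\Phi_n$. The computation is exactly that of Lemma \ref{s4:L2}, or of Lemma \ref{s5:L5} for compact support. It works because $[\mathcal K(u)]=0$ and $[\mathcal K(\Phi_n)]=0$, the latter holding since $\tilde d'$ vanishes at the vertices. Next we bound $\Delta_{\mathcal G}\Phi_n$ uniformly in $n$:
\begin{itemize}
\item On edges, $\Phi_n''$ contains the terms $\Phi''\phi$, $2\Phi'\phi' n^{-1}\tilde d'$ and $\Phi\,(\phi(\tilde d/n))''$. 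Each is bounded by $Ce^{-\alpha d}$, using Proposition \ref{s4:P1}(ii), \eqref{s4:18}, \eqref{s4:16} and $n\ge1$.
\item On vertices, we write
\[
\Delta_{\mathcal V}(\Phi\,\phi_n)=\phi_n\Delta_{\mathcal V}\Phi+\Phi\,\Delta_{\mathcal V}\phi_n+\frac{1}{\mu_{\mathcal V}(x)}\sum_{y\sim x}\omega(x,y)(\Phi(y)-\Phi(x))(\phi_n(y)-\phi_n(x)).
\]
Together with \eqref{s3:1}-(v), $|\tilde d(y)-\tilde d(x)|\le 3j$ for $y\sim x$, and $e^{-\alpha d(y)}\le e^{\delta/2}e^{-\alpha d(x)}$, this again gives a bound $Ce^{-\alpha d(x,x_0)}$.
\end{itemize}
Since $\Delta_{\mathcal G}\Phi_n\to\Delta_{\mathcal G}\Phi$ pointwise, dominated convergence (with $u\in L^1(0,2R;X_\alpha)$) yields
\[
\int u\,\Delta_{\mathcal G}\Phi_n\to\int u\,\Delta_{\mathcal G}\Phi.
\]
Applying Lemma \ref{s4:L2} converts this limit back to $\int\Phi\,\Delta_{\mathcal G}u$.

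\emph{Nonlinear term.} The term $\int h|u|^\sigma\Phi_n$ has a nonnegative integrand that increases monotonically to $h|u|^\sigma\Phi$, because $\phi(\tilde d/n)\uparrow1$ (using $\phi'\le0$). By monotone convergence its limit is $\int h|u|^\sigma\Phi$, possibly $+\infty$ a priori. However, all other terms in the inequality have finite limits, so the inequality forces this limit to be finite, and \eqref{s4:30} follows.

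\emph{Main obstacle.} The delicate step is the uniform exponential bound for $\Delta_{\mathcal V}\Phi_n$ and for $\Phi_n''$ on edges. This bound is what makes dominated convergence legitimate, so it has to be checked carefully, in particular the cross terms and the vertex-neighbour comparison of the exponential weights.
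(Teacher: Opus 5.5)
Your proof is correct, and its skeleton matches the paper's. Both multiply $\Phi$ by the same spatial cut-off $\phi(\tilde d(x,x_0)/k)$. Both use dominated convergence via \eqref{s5:12} for the $u\Phi_t$ and $u_0\Phi(\cdot,0)$ terms. Both use monotone convergence for the nonlinear term and read off its finiteness from the inequality.

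Where you genuinely differ is the Laplacian term. The paper passes to the limit directly in $\int_0^\infty\int_{\mathcal G}\Phi_k\Delta_{\mathcal G}u$. It dominates by $|\Phi\Delta_{\mathcal G}u|$ and asserts, ``using Lemma \ref{s4:L2}'', that $\int_0^{2R}\int_{\mathcal G}|\Phi\Delta_{\mathcal G}u|\le\int_0^{2R}\int_{\mathcal G}|u||\Delta_{\mathcal G}\Phi|$. That lemma is an identity between signed integrals, so it does not yield an inequality between absolute values.
\begin{itemize}
\item For the vertex part, the needed absolute summability is actually established inside the proof of Lemma \ref{s4:L2} (the double-sum bound), so that piece is fine.
\item For the edge part, integrability of $\sum_e\int_0^{l_e}|\Phi_e u_e''|$ is not implied by $u\in L^1_{\rm loc}([0,\infty);X_\alpha)$, which gives no control on $u''$.
\end{itemize}

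Your route avoids this. You first move the Laplacian onto $\Phi_n$, which is rigorous because $\Phi_n$ has compact support and $[\mathcal K(\Phi_n)]=[\mathcal K(u)]=0$. You then prove the uniform bound $|\Delta_{\mathcal G}\Phi_n|\le Ce^{-\alpha d(x,x_0)}$. Your three edge terms and the discrete Leibniz formula with its cross term are all estimated correctly. This uses only the weighted $L^1$ control of $u$ that is actually assumed. The cost is the extra computation of $\Delta_{\mathcal G}\Phi_n$; the gain is a sound limit passage.

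Your argument also directly produces \eqref{s4:30} with $\int_0^\infty\int_{\mathcal G}u\,\Delta_{\mathcal G}\Phi$ in place of $\int_0^\infty\int_{\mathcal G}\Phi\,\Delta_{\mathcal G}u$. That is exactly the form used in \eqref{s5:22}. So your final appeal to Lemma \ref{s4:L2}, whose edge part has the same kind of convergence issue, is needed only to match the literal statement.
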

\begin{proof}
	To justify this, let
	\[
	\eta_k(x) := \phi\Big(\frac{\tilde d(x,x_0)}{k}\Big),\quad x\in \mathcal G,
	\]
	where $\phi$ is the $C^2$ cut-off function defined in \eqref{s4:6}. Then 
	$$0\leq \eta_k\leq 1,\quad \eta_k\equiv 1,\ \text{on}\ \tilde B_k,\quad \eta_k\equiv 0, \text{ on } \mathcal G\setminus \tilde B_{2k}.$$ 
	Define
	\[
	\Phi_k(x,t) := \eta_k(x)\Phi(x,t).
	\]
	By construction, each $\Phi_k$ is an admissible test function in the sense of Definition \ref{s2:D1}. For this sequence, we have
	\begin{align}\label{s4:19}
		\nonumber&\int_0^\infty \int_{\tilde B_{2k}}u(x,t)\Phi_{k,t}(x,t)d\mu_{\mathcal{G}}dt+ \int_0^\infty \int_{\tilde B_{2k}}\Phi_k(x,t)\Delta_{\mathcal{G}} u(x,t)d\mu_{\mathcal{G}}dt\\
		&\quad+\int_{\tilde B_{2k}}u_0(x)\Phi_k(x,0)d\mu_{\mathcal{G}}	+\int_0^\infty \int_{\tilde B_{2k}}h(x,t)|u(x,t)|^\sigma\Phi_k(x,t)d\mu_{\mathcal{G}} dt\leq0,
	\end{align}
	for every $k\in\mathbb{N}$. 
	
	Since $0\leq \eta_k\leq 1$ and $\eta_k(x)\to1$ for every $x\in\mathcal G$, we have
	$$
	\Phi_k(x,t)\to\Phi(x,t),
	\quad
	\Phi_{k,t}(x,t)\to\Phi_t(x,t),\quad k\to\infty,
	$$
	pointwise on $\mathcal G\times[0,\infty)$. Moreover, by the decay estimates \eqref{s5:12},
	$$
	|u(x,t)\Phi_{k,t}(x,t)|
	\leq
	C|u(x,t)|e^{-\alpha d(x,x_0)},$$
	$$|u_0(x)\Phi_k(x,0)|
	\leq
	C|u_0(x)|e^{-\alpha d(x,x_0)}.
	$$
	Since $u\in L^1_{\mathrm{loc}}([0,\infty);X_\alpha)$, $u_0\in X_\alpha$, and $\Phi(\cdot,t)\equiv0$ for $t\ge 2R$, the first and third terms are dominated by integrable functions independent of $k$. 
	
	For the second term, using Lemma \ref{s4:L2} and \eqref{s5:12}, we obtain
	\begin{align*}
		\int_0^\infty\int_{\mathcal G}|\Phi(x,t)\Delta_{\mathcal G}u(x,t)|d\mu_{\mathcal G}dt&\leq\int_0^{2R}\int_{\mathcal G}|u(x,t)||\Delta_{\mathcal G}\Phi(x,t)|d\mu_{\mathcal G}dt\\
		&\leq C \int_0^{2R}\int_{\mathcal G} |u(x,t)|e^{-\alpha d(x,x_0)}d\mu_{\mathcal G}dt\\
		&\leq C,
	\end{align*}
	that is, 
	\[
	|\Phi \Delta_{\mathcal G} u| \in L^1(\mathcal G \times [0,2R]),
	\]
	and as $|\Phi_k \Delta_{\mathcal G} u| \leq |\Phi \Delta_{\mathcal G} u|$, the integrand of the second term is also dominated by an integrable function independent of $k$. 
	
	Finally, observing that the fourth term is non-decreasing, the Monotone Convergence Theorem yields
	$$
	\lim_{k\to\infty} \int_0^\infty \int_{\tilde B_{2k}}h(x,t)|u(x,t)|^\sigma\Phi_k(x,t)d\mu_{\mathcal{G}} dt
	= \int_0^\infty \int_{\mathcal G} h(x,t)|u(x,t)|^\sigma \Phi(x,t)\, d\mu_{\mathcal G} dt.
	$$
	Consequently, passing to the limit as $k\to\infty$ in \eqref{s4:19}, we obtain \eqref{s4:30}. In particular, since the first three terms are finite, the fourth term is finite as well. Therefore, we can choose $\Phi$ as the test function in \eqref{s2:9}, which satisfies \eqref{s4:30}.
\end{proof}

In view of the preceding key estimates, we are now ready to present the proof of Theorem \ref{s2:T4}.\\

\noindent$\textbf{\emph{Proof of Theorem \ref{s2:T4}.}}$  Let $u\in L^1_{\text{loc}}([0,\infty); X_\alpha)$ be a very weak solution of \eqref{s1:1}. By Lemma \ref{s4:L3}, one has
\begin{align*}
  \int_0^\infty \int_{\mathcal{G}}h(x,t)|u(x,t)|^\sigma\Phi(x,t)d\mu_{\mathcal{G}} &\leq-\int_0^\infty\int_{\mathcal{G}}\Phi(x,t)\Delta_{\mathcal{G}} u(x,t)d\mu_{\mathcal{G}} dt
  -\int_0^\infty \int_{\mathcal{G}}u(x,t)\Phi_t(x,t)d\mu_{\mathcal{G}}dt\\
  &\quad-\int_\mathcal{G}u_0(x)\Phi(x,0)d\mu_{\mathcal{G}}.
\end{align*}
It then follows from Lemma \ref{s4:L2} that
\begin{align}\label{s5:22}
	\nonumber&\int_0^\infty \int_{\mathcal{G}}h(x,t)|u(x,t)|^\sigma\Phi(x,t)d\mu_{\mathcal{G}}\\ \nonumber&\leq-\int_0^\infty\int_{\mathcal{G}}u(x,t)\Delta_{\mathcal{G}} \Phi(x,t)d\mu_{\mathcal{G}} dt
	-\int_0^\infty \int_{\mathcal{G}}u(x,t)\Phi_t(x,t)d\mu_{\mathcal{G}}dt-\int_\mathcal{G}u_0(x)\Phi(x,0)d\mu_{\mathcal{G}}\\
	\nonumber&= -\int_0^\infty\le\{\sum_{x\in\mathcal{V}}\mu_{\mathcal{V}}(x)u(x,t) \Delta_{\mathcal{V}}\Phi(x,t) \ri\}dt-\int_0^\infty \le\{\sum_{e\in\mathcal{E}}\int_{0}^{l_e}u_e(x,t)\Phi_{e}^{\prime\prime}(x,t)dx \ri\}dt\\
	&\quad-\int_0^\infty \int_{\mathcal{G}}u(x,t)\Phi_t(x,t)d\mu_{\mathcal{G}}dt-\int_\mathcal{G}u_0(x)\Phi(x,0)d\mu_{\mathcal{G}}.
\end{align}

 We next estimate each term on the right-hand side of \eqref{s5:22}. Using \eqref{s4:17}, \eqref{s4:24} and \eqref{s4:25}, applying Young’s inequality with a sufficiently small $\epsilon>0$, we obtain from \eqref{s3:11} that
\begin{align}\label{s5:23}
	\nonumber\le|-\int_0^\infty\le\{\sum_{x\in\mathcal{V}}\mu_{\mathcal{V}}(x)u(x,t) \Delta_{\mathcal{V}}\Phi(x,t) \ri\}dt\ri|&\leq \int_0^\infty\le\{\sum_{x\in\mathcal{V}}\mu_{\mathcal{V}}(x)|u(x,t)| \le|\Delta_{\mathcal{V}}\Phi(x,t) \ri|\ri\}dt\\
	\nonumber&\leq \frac{C}{R}\int_0^\infty\le\{ \sum_{x\in\mathcal{V}}\mu_{\mathcal{V}}(x)|u(x,t)| \phi^s\le(\frac{t}{R}\ri)e^{\frac{-\delta d(x,x_0)}{R}}\textbf{1}_{ B_{R}^c}(x)\ri\}dt \\
	\nonumber&\leq \epsilon\int_0^\infty\le\{\sum_{x\in\mathcal{V}\cap B_{R}^c}\mu_{\mathcal{V}}(x) |u(x,t)|^\sigma h(x,t) \phi^s\le(\frac{t}{R}\ri)e^{\frac{-\delta d(x,x_0)}{R}}\ri\}dt \\
	\nonumber&\quad+\frac{C_\epsilon}{R^{\frac{\sigma}{\sigma-1}}}\int_0^\infty \le\{\sum_{x\in\mathcal{V}\cap B_{R}^c}\mu_{\mathcal{V}}(x) h^{-\frac{1}{\sigma-1}}(x,t) \phi^s\le(\frac{t}{R}\ri)e^{\frac{-\delta d(x,x_0)}{R}}\ri\}dt \\
	\nonumber&\leq  \epsilon C\int_0^\infty \le\{\sum_{x\in\mathcal{V}\cap B_{R}^c}\mu_{\mathcal{V}}(x) |u(x,t)|^\sigma h(x,t) \phi^s\le(\frac{t}{R}\ri)\psi\le(\frac{d(x,x_0)-j}{R}\ri)\ri\}dt \\
	\nonumber&\quad+\frac{C_\epsilon}{R^{\frac{\sigma}{\sigma-1}}}\int_0^{2R} \le\{\sum_{x\in\mathcal{V}\cap B_{R}^c}\mu_{\mathcal{V}}(x) h^{-\frac{1}{\sigma-1}}(x,t) e^{\frac{-\delta d(x,x_0)}{R}}\ri\}dt \\
	&\leq \frac{1}{4}\int_0^\infty\le\{\sum_{x\in\mathcal{V}}\mu_{\mathcal{V}}(x) |u(x,t)|^\sigma h(x,t) \Phi(x,t)\ri\}dt+C, 
\end{align}
and that
\begin{align}\label{s5:14}
	\nonumber&\le|-\int_0^\infty \le\{\sum_{e\in\mathcal{E}}\int_{0}^{l_e}u_e(x,t)\Phi_{e}^{\prime\prime}(x,t)dx \ri\}dt\ri|\\
	\nonumber&\leq 	 \frac{C}{R} \int_0^\infty\le\{\sum_{e\in\mathcal{E}}\int_{0}^{l_e} |u_e(x,t)| \phi^s\le(\frac{t}{R}\ri)e^{\frac{-\delta \tilde{d}(x,x_0)}{R}}\textbf{1}_{ \tilde{B}_{R+j}^c}(x)dx\ri\}dt\\
	\nonumber&\leq \epsilon \int_0^\infty\le\{\sum_{e\in \mathcal{E}}\int_{0}^{l_e} |u_e(x,t)|^\sigma h_e(x,t) \phi^s\le(\frac{t}{R}\ri)e^{\frac{-\delta \tilde{d}(x,x_0)}{R}}\textbf{1}_{ \tilde{B}_{R+j}^c}(x)dx\ri\}dt\\
	\nonumber&\quad +\frac{C_\epsilon}{R^{\frac{\sigma}{\sigma-1}}}\int_0^{\infty}\le\{\sum_{e\in\mathcal{E}}\int_{0}^{l_e} h_e^{-\frac{1}{\sigma-1}}(x,t)\phi^{s}\le(\frac{t}{R}\ri)e^{\frac{-\delta \tilde{d}(x,x_0)}{R}}\textbf{1}_{ \tilde{B}_{R+j}^c}(x)dx\ri\}dt\\
	\nonumber&\leq  \epsilon  \int_0^\infty\le\{\sum_{e\in \mathcal{E}}\int_{0}^{l_e} |u_e(x,t)|^\sigma h_e(x,t) \phi^s\le(\frac{t}{R}\ri)e^{ \frac{-\delta \tilde{d}(x,x_0)}{R}}\textbf{1}_{ \tilde{B}_{R+j}^c}(x)dx\ri\}dt\\
	\nonumber&\quad +\frac{C_\epsilon}{R^{\frac{\sigma}{\sigma-1}}}\int_0^{\infty}\le\{\sum_{e\in\mathcal{E}}\int_{0}^{l_e} h_e^{-\frac{1}{\sigma-1}}(x,t)\phi^{s}\le(\frac{t}{R}\ri)e^{-\delta\frac{ d(x,x_0)-j}{R}}\textbf{1}_{ B_{R}^c}(x)dx\ri\}dt\\
	\nonumber&\leq \epsilon C \int_0^\infty\le\{\sum_{e\in\mathcal{E}}\int_{0}^{l_e} |u_e(x,t)|^\sigma h_e(x,t) \phi^s\le(\frac{t}{R}\ri) \psi_e\le(\frac{\tilde{d}(x,x_0)-j}{R}\ri)\textbf{1}_{ \tilde{B}_{R+j}^c}(x)dx\ri\}dt\\
	\nonumber&\quad+\frac{C_\epsilon}{R^{\frac{\sigma}{\sigma-1}}}\int_0^{2R}\le\{\sum_{e\in\mathcal{E}\cap B_{R}^c}\int_{0}^{l_e} h_e^{-\frac{1}{\sigma-1}}(x,t)e^{\frac{-\delta d(x,x_0)}{R}}dx\ri\}dt\\
	&\leq \frac{1}{4}\int_0^\infty\le\{\sum_{e\in\mathcal{E}}\int_{0}^{l_e} |u_e(x,t)|^\sigma h_e(x,t) \Phi_e(x,t)dx\ri\}dt+C,
\end{align}
where we have used \eqref{s4:16} and the fact that $\textbf{1}_{ \tilde{B}_{R+j}^c}\leq\textbf{1}_{ B_{R}^c}$. Indeed, if $x\in \tilde{B}_{R+j}^c$, then $\tilde{d}(x,x_0)\geq R+j$, and hence
$$d(x,x_0)\geq \tilde{d}(x,x_0)-j\geq R.$$
Thus, $x\in B_{R}^c$.

By \eqref{s4:26}, using Young’s inequality, we have 
\begin{align*}
	\nonumber&\le|-\int_0^\infty \int_{\mathcal{G}}u(x,t)\Phi_t(x,t)d\mu_{\mathcal{G}}dt\ri|\\
	\nonumber&\leq \frac{C}{R}\int_0^\infty \int_{\mathcal{G}}|u(x,t)| \phi^{s-1}\le(\frac{t}{R}\ri)e^{\frac{-\delta d(x,x_0)}{R}}\textbf{1}_{Q_R}(x,t) d\mu_{\mathcal{G}}dt\\
	\nonumber&\leq \epsilon \int_R^{2R} \le\{\sum_{x\in\mathcal{V}}\mu_{\mathcal{V}}(x)|u(x,t)|^\sigma h(x,t) \phi^{s}\le(\frac{t}{R}\ri)e^{\frac{-\delta d(x,x_0)}{R}}\ri\}dt\\
	\nonumber&\quad+ \epsilon\int_R^{2R}\le\{\sum_{e\in\mathcal{E}}\int_{0}^{l_e}|u_e(x,t)|^\sigma h_e(x,t)\phi^{s}\le(\frac{t}{R}\ri)e^{\frac{-\delta d(x,x_0)}{R}}dx \ri\}dt\\
	\nonumber&\quad+\frac{C_\epsilon}{R^{\frac{\sigma}{\sigma-1}}}\int_R^{2R} \le\{\sum_{x\in\mathcal{V}}\mu_{\mathcal{V}}(x) h^{-\frac{1}{\sigma-1}}(x,t) \phi^{s-\frac{\sigma}{\sigma-1}}\le(\frac{t}{R}\ri)e^{\frac{-\delta d(x,x_0)}{R}} \ri\} dt\\
	\nonumber&\quad+\frac{C_\epsilon}{R^{\frac{\sigma}{\sigma-1}}}\int_R^{2R}\le\{\sum_{e\in\mathcal{E}}\int_{0}^{l_e}h_e^{-\frac{1}{\sigma-1}}(x,t)\phi^{s-\frac{\sigma}{\sigma-1}}\le(\frac{t}{R}\ri)e^{\frac{-\delta d(x,x_0)}{R}}dx \ri\}dt.
	\end{align*}
	Noting that $s>\sigma/(\sigma-1)$, we further obtain from \eqref{s3:15}, \eqref{s3:11} and \eqref{s4:17} that
	\begin{align}\label{s5:13}
			\nonumber&\le|-\int_0^\infty \int_{\mathcal{G}}u(x,t)\Phi_t(x,t)d\mu_{\mathcal{G}}dt\ri|\\
	\nonumber&\leq \epsilon \int_R^{2R} \le\{\sum_{x\in\mathcal{V}}\mu_{\mathcal{V}}(x)|u(x,t)|^\sigma h(x,t) \phi^{s}\le(\frac{t}{R}\ri)e^{-\delta \frac{d(x,x_0)-j}{R}}\ri\}dt\\
	\nonumber&\quad+ \epsilon\int_R^{2R}\le\{\sum_{e\in\mathcal{E}}\int_{0}^{l_e}|u_e(x,t)|^\sigma h_e(x,t)\phi^{s}\le(\frac{t}{R}\ri)e^{-\delta \frac{\tilde{d}(x,x_0)-j}{R}}dx \ri\}dt\\
	\nonumber&\quad+\frac{C_\epsilon}{R^{\frac{\sigma}{\sigma-1}}}\int_R^{2R} \le\{\sum_{x\in\mathcal{V}}\mu_{\mathcal{V}}(x) h^{-\frac{1}{\sigma-1}}(x,t) e^{\frac{-\delta d(x,x_0)}{R}} \ri\}dt+\frac{C_\epsilon}{R^{\frac{\sigma}{\sigma-1}}}\int_R^{2R}\le\{\sum_{e\in\mathcal{E}}\int_{0}^{l_e}h_e^{-\frac{1}{\sigma-1}}(x,t)e^{\frac{-\delta d(x,x_0)}{R}}dx \ri\}dt\\
	\nonumber&\leq \epsilon C \int_R^{2R} \le\{\sum_{x\in\mathcal{V}}\mu_{\mathcal{V}}(x)|u(x,t)|^\sigma h(x,t) \phi^{s}\le(\frac{t}{R}\ri)\psi\le(\frac{d(x,x_0)-j}{R}\ri)\ri\}dt\\
	&\nonumber\quad+ \epsilon C\int_R^{2R}\le\{\sum_{e\in\mathcal{E}}\int_{0}^{l_e}|u_e(x,t)|^\sigma h_e(x,t)\phi^{s}\le(\frac{t}{R}\ri)\psi_e\le(\frac{\tilde{d}(x,x_0)-j}{R}\ri)dx \ri\}dt\\
	\nonumber&\quad+\frac{C_\epsilon}{R^{\frac{\sigma}{\sigma-1}}}\int_R^{2R} \le\{\sum_{x\in\mathcal{V}\cap B_R}\mu_{\mathcal{V}}(x) h^{-\frac{1}{\sigma-1}}(x,t) e^{\frac{-\delta d(x,x_0)}{R}}\ri\} dt+\frac{C_\epsilon}{R^{\frac{\sigma}{\sigma-1}}}\int_0^{2R} \le\{\sum_{x\in\mathcal{V}\cap B_R^c}\mu_{\mathcal{V}}(x) h^{-\frac{1}{\sigma-1}}(x,t) e^{\frac{-\delta d(x,x_0)}{R}}\ri\}dt\\
	\nonumber&\quad+\frac{C_\epsilon}{R^{\frac{\sigma}{\sigma-1}}}\int_R^{2R}\le\{\sum_{e\in\mathcal{E}\cap B_{R}}\int_{0}^{l_e}h_e^{-\frac{1}{\sigma-1}}(x,t)e^{\frac{-\delta d(x,x_0)}{R}}dx \ri\}dt+\frac{C_\epsilon}{R^{\frac{\sigma}{\sigma-1}}}\int_0^{2R}\le\{\sum_{e\in \mathcal{E}\cap B_{R}^c}\int_{0}^{l_e}h_e^{-\frac{1}{\sigma-1}}(x,t)e^{\frac{-\delta d(x,x_0)}{R}}dx \ri\}dt\\
	&\leq \frac{1}{4} \int_R^{2R}\int_\mathcal{G} |u(x,t)|^\sigma h(x,t)\Phi(x,t)d\mu_{\mathcal{G}}dt+C,
\end{align}
for $\epsilon>0$ sufficiently small.  

Finally, by the definition of $\psi$, we get
\begin{align}\label{s5:15}
	\nonumber&-\int_\mathcal{G}u_0(x)\Phi(x,0)d\mu_{\mathcal{G}}\\
	\nonumber&=-\sum_{x\in\mathcal{V}}\mu_{\mathcal{V}}(x)u_0(x)\Phi(x,0)-\sum_{e\in\mathcal{E}}\int_{0}^{l_e}u_{0,e}(x)\Phi_e(x,0)dx\\
	\nonumber&=-\sum_{x\in\mathcal{V}}\mu_{\mathcal{V}}(x)u_0(x)\psi\le(\frac{d(x,x_0)-j}{R}\ri)-\sum_{e\in\mathcal{E}}\int_{0}^{l_e}u_{0,e}(x)\psi_e\le(\frac{\tilde{d}(x,x_0)-j}{R}\ri)dx\\
	\nonumber&= -\sum_{x\in\mathcal{V}}\mu_{\mathcal{V}}(x)u_0^+(x)\psi\le(\frac{d(x,x_0)-j}{R}\ri)-\sum_{x\in\mathcal{V}}\mu_{\mathcal{V}}(x)u_0^-(x)\psi\le(\frac{d(x,x_0)-j}{R}\ri)\\
	\nonumber&\quad-\sum_{e\in\mathcal{E}}\int_{0}^{l_e}u_{0,e}^{+}(x)\psi_e\le(\frac{\tilde{d}(x,x_0)-j}{R}\ri)dx-\sum_{e\in\mathcal{E}}\int_{0}^{l_e}u_{0,e}^{-}(x)\psi_e\le(\frac{\tilde{d}(x,x_0)-j}{R}\ri)dx\\
	\nonumber&\leq -\sum_{x\in\mathcal{V}\cap  B_{R+j}}\mu_{\mathcal{V}}(x)u_0^+(x)-\sum_{x\in\mathcal{V}}\mu_{\mathcal{V}}(x)u_0^-(x)-\sum_{e\in\mathcal{E}\cap \tilde{B}_{R+j}}\int_{0}^{l_e}u_{0,e}^{+}(x)dx-\sum_{e\in\mathcal{E}}\int_{0}^{l_e}u_{0,e}^{-}(x)dx\\
	\nonumber&\leq-\sum_{x\in\mathcal{V}\cap  B_{R}}\mu_{\mathcal{V}}(x)u_0^+(x)-\sum_{e\in\mathcal{E}\cap B_{R}}\int_{0}^{l_e}u_{0,e}^{+}(x)dx-\int_\mathcal{G}u_0^-(x)d\mu_{\mathcal{G}}\\
	&=-\int_{B_{R}}u_0^+(x)d\mu_{\mathcal{G}}-\int_\mathcal{G}u_0^-(x)d\mu_{\mathcal{G}},
\end{align}
where we have used $\psi\leq1$ and $\psi\equiv1$ on $\tilde{B}_{R+j}$. Combining the above estimates with  \eqref{s5:22}, we deduce  that 
\begin{align*}
	&\int_0^\infty \int_{\mathcal{G}}h(x,t)|u(x,t)|^\sigma\Phi(x,t)d\mu_{\mathcal{G}} dt\\
	&\leq \frac{1}{4}\int_0^\infty \int_{\mathcal{G}}h(x,t)|u(x,t)|^\sigma\Phi(x,t)d\mu_{\mathcal{G}} dt+\frac{1}{4} \int_R^{2R}\int_\mathcal{G} |u(x,t)|^\sigma h(x,t)\Phi(x,t)d\mu_{\mathcal{G}}dt\\
	&\quad+C-\int_{B_{R}}u_0^+(x)d\mu_{\mathcal{G}}-\int_\mathcal{G}u_0^-(x)d\mu_{\mathcal{G}}.
\end{align*}
Consequently, 
\begin{align*}
	\int_0^R \int_{\tilde{B}_{R+j}}h(x,t)|u(x,t)|^\sigma d\mu_{\mathcal{G}} dt&\leq \int_0^\infty \int_{\mathcal{G}}h(x,t)|u(x,t)|^\sigma\Phi(x,t)d\mu_{\mathcal{G}} dt\\
	&\leq C-C\le(\int_{B_{R}}u_0^+(x)d\mu_{\mathcal{G}}+\int_\mathcal{G}u_0^-(x)d\mu_{\mathcal{G}}\ri),
\end{align*}
where $C>0$ is a constant independent of $R$. Since $u_0$ satisfies \eqref{s3:12}, we take the limit as $R\ra\infty$. By Monotone Convergence Theorem, we conclude that
\begin{equation}\label{s5:16}
	\int_0^\infty \int_\mathcal{G}h(x,t)|u(x,t)|^{\sigma}d\mu_\mathcal{G}dt\leq C-C\int_\mathcal{G}u_0(x)d\mu_\mathcal{G}\leq C.
\end{equation}

It remains to show that $u\equiv 0$. To this end, we revisit the estimates above and pass to the limit as $R\to\infty$ in \eqref{s5:22}. By H\"{o}lder's inequality, and using \eqref{s3:15}, \eqref{s3:11} and \eqref{s4:26}, we get from \eqref{s5:13} that
\begin{align*}
	&\le|-\int_0^\infty\le\{\sum_{e\in\mathcal{E}}\int_{0}^{l_e}u_e(x,t)\Phi_{e,t}(x,t)dx \ri\}dt\ri|\\
	&\leq  \frac{C}{R} \sum_{e\in\mathcal{E}}\int_{0}^{l_e}\le\{\int_R^{2R}|u_e(x,t)|\phi^{s-1}\le(\frac{t}{R}\ri)e^{\frac{-\delta d(x,x_0)}{R}}dt \ri\}dx\\
	&\leq \frac{C}{R} \sum_{e\in\mathcal{E}}\int_{0}^{l_e}\le\{\int_R^{2R} 	|u_e(x,t)|^\sigma h_e(x,t)\phi^{s}\le(\frac{t}{R}\ri)e^{\frac{-\delta d(x,x_0)}{R}} dt\ri\}^{\frac{1}{\sigma}}\le\{\int_R^{2R}h_e^{-\frac{1}{\sigma-1}}(x,t)\phi^{s-\frac{\sigma}{\sigma-1}}\le(\frac{t}{R}\ri)e^{\frac{-\delta d(x,x_0)}{R}}dt\ri\}^{\frac{\sigma-1}{\sigma}}dx\\
	&\leq \frac{C}{R} \sum_{e\in\mathcal{E}} \le\{\int_{0}^{l_e} \int_R^{2R} 	|u_e(x,t)|^\sigma h_e(x,t)\phi^{s}\le(\frac{t}{R}\ri)e^{\frac{-\delta d(x,x_0)}{R}} dtdx\ri\}^\frac{1}{\sigma}\le\{\int_{0}^{l_e} \int_R^{2R}h_e^{-\frac{1}{\sigma-1}}(x,t)\phi^{s-\frac{\sigma}{\sigma-1}}\le(\frac{t}{R}\ri)e^{\frac{-\delta d(x,x_0)}{R}}dtdx\ri\}^\frac{\sigma-1}{\sigma}\\
	&\leq \frac{C}{R} \le\{\sum_{e\in\mathcal{E}}\int_{0}^{l_e} \le(\int_R^{2R} 	|u_e(x,t)|^\sigma h_e(x,t)\phi^{s}\le(\frac{t}{R}\ri)e^{\frac{-\delta d(x,x_0)}{R}} dt\ri)dx\ri\}^{\frac{1}{\sigma}}\\
	\nonumber&\quad\cdot\le\{\sum_{e\in\mathcal{E}}\int_{0}^{l_e} \le(\int_R^{2R} h_e^{-\frac{1}{\sigma-1}}(x,t)\phi^{s-\frac{\sigma}{\sigma-1}}\le(\frac{t}{R}\ri)e^{\frac{-\delta d(x,x_0)}{R}}dt\ri)dx \ri\}^{\frac{\sigma-1}{\sigma}}\\
	&\leq \frac{C}{R}\le\{\int_R^{2R}\le(\sum_{e\in\mathcal{E}}\int_{0}^{l_e}	|u_e(x,t)|^\sigma h_e(x,t)e^{\frac{-\delta d(x,x_0)}{R}}dx \ri)dt\ri\}^{\frac{1}{\sigma}}\le\{\int_R^{2R}\le(\sum_{e\in\mathcal{E}}\int_{0}^{l_e}	h_e^{-\frac{1}{\sigma-1}}(x,t)e^{\frac{-\delta d(x,x_0)}{R}}dx \ri)dt\ri\}^{\frac{\sigma-1}{\sigma}}\\
	&\leq C \le(\int_R^{2R}\le\{\sum_{e\in\mathcal{E}}\int_{0}^{l_e}	|u_e(x,t)|^\sigma h_e(x,t)dx \ri\}dt\ri)^{\frac{1}{\sigma}}.
\end{align*}
Similarly,
\begin{align*}
	\le|-\int_0^\infty\le\{\sum_{x\in\mathcal{V}}\mu_{\mathcal{V}}(x)u(x,t)\Phi_{t}(x,t)\ri\}dt\ri|\leq C \le(\int_R^{2R}\le\{\sum_{x\in\mathcal{V}}\mu_{\mathcal{V}}(x)h(x,t)|u(x,t)|^{\sigma}\ri\}dt\ri)^{\frac{1}{\sigma}}.
\end{align*}
In view of \eqref{s3:11},  we have from \eqref{s5:14} that
\begin{align*}
	&\le|-\int_0^\infty\le\{\sum_{e\in\mathcal{E}}\int_{0}^{l_e}u_e(x,t)\Phi_e^{\prime\prime}(x,t)dx \ri\}dt\ri|\\
	&\leq 	\frac{C}{R} \int_0^\infty\le\{ \sum_{e\in\mathcal{E}}\int_{0}^{l_e} |u_e(x,t)| \phi^s\le(\frac{t}{R}\ri)e^{\frac{-\delta \tilde{d}(x,x_0)}{R}}\textbf{1}_{\tilde{B}_{R+j}^c}(x)dx\ri\}dt\\
	&\leq \frac{C}{R}\le\{\int_0^{\infty}\le(\sum_{e\in \mathcal{E}\cap \tilde{B}_{R+j}^c}\int_{0}^{l_e}	|u_e(x,t)|^\sigma h_e(x,t)\phi^s\le(\frac{t}{R}\ri)e^{\frac{-\delta \tilde{d}(x,x_0)}{R}}dx \ri)dt\ri\}^{\frac{1}{\sigma}}\\
	\nonumber&\quad\cdot\le\{\int_0^{\infty}\le(\sum_{e\in \mathcal{E}\cap\tilde{B}_{R+j}^c}\int_{0}^{l_e}	h_e^{-\frac{1}{\sigma-1}}(x,t)\phi^s\le(\frac{t}{R}\ri)e^{\frac{-\delta \tilde{d}(x,x_0)}{R}}dx \ri)dt\ri\}^{\frac{\sigma-1}{\sigma}}\\
	&\leq \frac{C}{R}\le\{\int_0^{\infty}\le(\sum_{e\in \mathcal{E}\cap \tilde{B}_{R+j}^c}\int_{0}^{l_e}	|u_e(x,t)|^\sigma h_e(x,t)dx \ri)dt\ri\}^{\frac{1}{\sigma}}\le\{\int_0^{2R}\le(\sum_{e\in \mathcal{E}\cap B_{R}^c}\int_{0}^{l_e}	h_e^{-\frac{1}{\sigma-1}}(x,t)e^{\frac{-\delta d(x,x_0)}{R}}dx \ri)dt\ri\}^{\frac{\sigma-1}{\sigma}}\\
	&\leq C \le(\int_0^{\infty}\le\{\sum_{e\in \mathcal{E}\cap B_{R}^c}\int_{0}^{l_e}	|u_e(x,t)|^\sigma h_e(x,t)dx \ri\}dt\ri)^{\frac{1}{\sigma}},
\end{align*}
and from \eqref{s5:23} that
$$\le|-\int_0^\infty\int_{\mathcal{G}}u(x,t) \Delta_{\mathcal{V}}\Phi(x,t) d\mu_{\mathcal{V}} dt\ri|\leq C  \le(\int_0^{\infty}\le\{\sum_{x\in\mathcal{V}\cap B_{R}^c} \mu_{\mathcal{V}}(x)	h(x,t)|u(x,t)|^\sigma \ri\}dt\ri)^{\frac{1}{\sigma}},$$
respectively. By these estimates, together with \eqref{s5:15}, we deduce from \eqref{s5:22} that
\begin{align*}
	&\int_0^R \int_{\tilde{B}_{R+j}}h(x,t)|u(x,t)|^\sigma d\mu_{\mathcal{G}} dt\\
	&\leq C \le(\int_R^{2R}\le\{\sum_{x\in\mathcal{V}}\mu_{\mathcal{V}}(x)h(x,t)|u(x,t)|^{\sigma}\ri\}dt\ri)^{\frac{1}{\sigma}}+C \le(\int_R^{2R}\le\{\sum_{e\in\mathcal{E}}\int_{0}^{l_e}	|u_e(x,t)|^\sigma h_e(x,t)dx \ri\}dt\ri)^{\frac{1}{\sigma}}\\
	&\quad+C  \le(\int_0^{\infty}\le\{\sum_{x\in\mathcal{V}\cap B_{R}^c} \mu_{\mathcal{V}}(x)	h(x,t)|u(x,t)|^\sigma \ri\}dt\ri)^{\frac{1}{\sigma}}+C \le(\int_0^{\infty}\le\{\sum_{e\in \mathcal{E}\cap B_{R}^c}\int_{0}^{l_e}	|u_e(x,t)|^\sigma h_e(x,t)dx \ri\}dt\ri)^{\frac{1}{\sigma}}\\
	&\quad-\int_{B_{R}}u_0^+(x)d\mu_{\mathcal{G}}-\int_\mathcal{G}u_0^-(x)d\mu_{\mathcal{G}},
\end{align*}
which implies that
\begin{equation*}
	\int_0^\infty \int_\mathcal{G}h(x,t)|u(x,t)|^{\sigma}d\mu_\mathcal{G}dt\leq -\int_\mathcal{G}u_0(x)d\mu_\mathcal{G}\leq 0,
\end{equation*}
by \eqref{s3:12} and \eqref{s5:16}. Since the left-hand side is nonnegative, we must have
$$\int_0^\infty \int_\mathcal{G}h(x,t)|u(x,t)|^{\sigma}d\mu_\mathcal{G}dt=0.$$
Hence, $u\equiv0$ on $\mathcal{G}\times[0,\infty)$.  $\hfill\Box$\\

\noindent$\textbf{\emph{Proof of Corollary \ref{s2:C2}.}}$ Assume that condition \eqref{s3:7} holds. Since $h(x,t)\geq g(x)$ for all $(x,t)\in \mathcal{G}\times[0,\infty)$, we have $$h^{-\frac{1}{\sigma-1}}(x,t)  \leq g^{-\frac{1}{\sigma-1}}(x).$$
Then for any $R\geq R_0$, we estimate
\begin{equation*}
	\int_R^{2R} \le\{\sum_{x\in \mathcal{V}\cap B_{R}}\mu_{\mathcal{V}}(x) h^{-\frac{1}{\sigma-1}}(x,t) e^{-\alpha d(x,x_0)} \ri\}dt\leq \int_R^{2R} \le\{\sum_{x\in\mathcal{V}}\mu_{\mathcal{V}}(x) g^{-\frac{1}{\sigma-1}}(x) e^{-\alpha d(x,x_0)} \ri\}dt\leq CR^{\frac{1}{\sigma-1}}R\leq CR^{\frac{\sigma}{\sigma-1}},
\end{equation*}
and
\begin{equation*}
	\int_R^{2R}\le\{\sum_{e\in \mathcal{E}\cap B_{R}}\int_{0}^{l_e}	h_e^{-\frac{1}{\sigma-1}}(x,t)e^{-\alpha d(x,x_0)}dx \ri\}dt\leq  \int_R^{2R}\le\{\sum_{e\in \mathcal{E}}\int_{0}^{l_e}	g_e^{-\frac{1}{\sigma-1}}(x)e^{-\alpha d(x,x_0)}dx \ri\}dt\leq CR^{\frac{1}{\sigma-1}}R\leq CR^{\frac{\sigma}{\sigma-1}}.
\end{equation*}
Similarly, for the integrations on $B_R^c\times[0,2R]$, we obtain the same bound.
The result then follows directly from Theorem \ref{s2:T4}. $\hfill\Box$\\

\noindent$\textbf{\emph{Proof of Corollary \ref{s2:C3}.}}$  Apply Corollary \ref{s2:C2} with $g \equiv 1$. By \eqref{s3:8}, for sufficiently large $k_0>R_0$ and any $R\geq k_0$, we have
\begin{align*}
	\sum_{x\in\mathcal{V}}\mu_{\mathcal{V}}(x) g^{-\frac{1}{\sigma-1}}(x) e^{-\alpha d(x,x_0)}&=\sum_{x\in\mathcal{V}}\mu_{\mathcal{V}}(x) e^{-\alpha d(x,x_0)}\\
	&\leq \sum_{k=k_0}^{\infty}\sum_{x\in\mathcal{V}\cap \{B_{k+1}\setminus B_k\}}\mu_{\mathcal{V}}(x)e^{-\alpha d(x,x_0)}+\sum_{x\in\mathcal{V}\cap B_{k_0}}\mu_{\mathcal{V}}(x)e^{-\alpha d(x,x_0)}\\
	&\leq C\sum_{k=k_0}^{\infty}e^{-\delta\frac{k}{R}}\mu_\mathcal{V}(B_{k+1}\setminus B_k)+\mu_\mathcal{V}(B_{k_0})\\
	&\leq C\sum_{k=k_0}^{\infty}e^{-\delta\frac{k}{R}}k^{\frac{2-\sigma}{\sigma-1}}+C\\
	&\leq C\int_{R_0}^\infty e^{-\delta\frac{r}{R}}r^{\frac{2-\sigma}{\sigma-1}}dr+C\\
	&\leq CR^{\frac{1}{\sigma-1}},
\end{align*} 
and
\begin{align*}
	\sum_{e\in\mathcal{E}}\int_0^{l_e}g^{-\frac{1}{\sigma-1}}(x)e^{-\alpha d(x,x_0)}dx&=\sum_{e\in\mathcal{E}}\int_0^{l_e}e^{-\alpha d(x,x_0)}dx\\
	&\leq \sum_{k=k_0}^{\infty}\sum_{e\in\mathcal{E}\cap \{B_{k+1}\setminus B_k\}}\int_{0}^{l_e}e^{-\alpha d(x,x_0)}dx+\sum_{e\in\mathcal{E}\cap B_{k_0}}\int_{0}^{l_e}e^{-\alpha d(x,x_0)}dx\\
	&\leq C\sum_{k=k_0}^{\infty}e^{-\delta\frac{k}{R}}\mu_\mathcal{E}(B_{k+1}\setminus B_k)+\mu_\mathcal{E}(B_{k_0})\\
	&\leq CR^{\frac{1}{\sigma-1}},
\end{align*} 
where we have used $d(x,x_0)\geq k$ for $x\in B_{k+1}\setminus B_k$ and $\alpha=\delta/R$. The conclusion follows immediately from Corollary \ref{s2:C2}. $\hfill\Box$

\section{Proofs of the main results for the hyperbolic problem}\label{4}

In this section, we study the hyperbolic inequality \eqref{s1:2} and prove Theorems \ref{s2:T1} and \ref{s2:T2}. Since the arguments are similar to those used in Section \ref{3}, we only sketch the proofs.

\noindent$\textbf{\emph{Proof of Theorem \ref{s2:T1}.}}$ Let $\tau:\mathcal{G}\times[0,\infty)\to\mathbb{R}$ be defined by \eqref{s4:7} satisfying \eqref{s4:6}, and let $s>2\sigma/(\sigma-1)$. Suppose that $u:\mathcal{G}\times[0,\infty)\to\mathbb{R}$ is a nonnegative very weak solution to \eqref{s1:2}. Observe that 
$$\le(\tau^s\ri)_t(x,0)=0,\quad\forall\ x\in\mathcal{G}.$$ 
Testing \eqref{s2:10} by $\tau^s$, it then follows from Lemma \ref{s5:L5} that
\begin{align}\label{s5:19}
	\nonumber&\int_0^\infty \int_{\mathcal{G}}h(x,t)u^\sigma(x,t)\tau^s(x,t)d\mu_{\mathcal{G}} dt\\
	\nonumber&\leq
-\int_0^\infty \int_{\mathcal{G}}\tau^s(x,t)\Delta_{\mathcal{G}} u(x,t)d\mu_{\mathcal{G}}dt+\int_0^\infty \int_{\mathcal{G}}\le(\tau^s\ri)_{tt}(x,t) u(x,t)d\mu_{\mathcal{G}}dt\\
\nonumber&\quad-\int_\mathcal{G}u_1(x)\tau^s(x,0)d\mu_{\mathcal{G}}+\int_\mathcal{G}u_0(x)\le(\tau^s\ri)_t(x,0)d\mu_{\mathcal{G}}\\
\nonumber&=-\int_{0}^\infty \le\{\sum_{x\in\mathcal{V}}\mu_{\mathcal{V}}(x)u(x,t)\Delta_{\mathcal{V}} \tau^s(x,t)\ri\}dt-\int_0^\infty\le\{\sum_{e\in\mathcal{E}}\int_{0}^{l_e}u_e(x,t)\le(\tau_e^s(x,t)\ri)^{\prime\prime}dx \ri\}dt\\
\nonumber&\quad-\int_\mathcal{G}u_1(x)\tau^s(x,0)d\mu_{\mathcal{G}}+\int_0^\infty \int_{\mathcal{G}}\le(\tau^s\ri)_{tt}(x,t) u(x,t)d\mu_{\mathcal{G}}dt\\
&:=I_1+I_2+I_3+I_4.
\end{align}
We estimate the four terms separately. Note that Lemma \ref{s4:L1} remains valid for $\theta_1\geq2$ and $\theta_2\geq2$ as well. Proceeding as in the derivation of \eqref{s5:11}, for any $R\geq \max\{4j,1\}$, by a convexity argument, we obtain from \eqref{s4:14} and \eqref{s4:13} that
\begin{align}\label{s5:1}
	\nonumber I_1+I_2&\leq \frac{C}{R}\int_{0}^\infty \le\{\sum_{x\in\mathcal{V}}\mu_{\mathcal{V}}(x)u(x,t)\tau^{s-1}(x,t)\textbf{1}_{M_R}(x,t)\ri\}dt\\
	&\quad+\frac{C}{R}\int_0^\infty\le\{\sum_{e\in\mathcal{E}}\int_{0}^{l_e}u_e(x,t)\tau_e^{s-1}(x,t)\textbf{1}_{M_R}(x,t)dx \ri\}dt,
\end{align}
where $M_R$ is given in \eqref{s4:28}. For the term $I_3$, observing that $\phi\equiv1$ on $[0,1]$ and $0\leq\phi\leq1$ on $[0,\infty)$, by \eqref{s4:16}, we have
\begin{align}\label{s5:5}
	\nonumber I_3&=-\sum_{x\in\mathcal{V}}\mu_{\mathcal{V}}(x)u_1(x)\tau^s(x,0)-\sum_{e\in\mathcal{E}}\int_{0}^{l_e}u_{1,e}(x)\tau_e^s(x,0)dx\\
	\nonumber&=-\sum_{x\in\mathcal{V}\cap \tilde{B}_{2^{1/\theta_1}R}}\mu_{\mathcal{V}}(x)u_1(x)\phi^s\le(\frac{d(x,x_0)^{\theta_1}}{R^{\theta_1}}\ri)-\sum_{e\in\mathcal{E}\cap \tilde{B}_{2^{1/\theta_1}R}}\int_{0}^{l_e}u_{1,e}(x)\phi_e^s\le(\frac{\tilde{d}(x,x_0)^{\theta_1}}{R^{\theta_1}}\ri)dx\\
	\nonumber&=-\sum_{x\in\mathcal{V}\cap B_{2^{1/\theta_1}R}}\mu_{\mathcal{V}}(x)u_1^+(x)\phi^s\le(\frac{d(x,x_0)^{\theta_1}}{R^{\theta_1}}\ri)-\sum_{x\in\mathcal{V}\cap B_{2^{1/\theta_1}R}}\mu_{\mathcal{V}}(x)u_1^-(x)\phi^s\le(\frac{d(x,x_0)^{\theta_1}}{R^{\theta_1}}\ri)\\
	\nonumber&\quad-\sum_{e\in\mathcal{E}\cap \tilde{B}_{2^{1/\theta_1}R}}\int_{0}^{l_e}u^+_{1,e}(x)\phi_e^s\le(\frac{\tilde{d}(x,x_0)^{\theta_1}}{R^{\theta_1}}\ri)dx-\sum_{e\in\mathcal{E}\cap \tilde{B}_{2^{1/\theta_1}R}}\int_{0}^{l_e}u^-_{1,e}(x)\phi_e^s\le(\frac{\tilde{d}(x,x_0)^{\theta_1}}{R^{\theta_1}}\ri)dx\\
	\nonumber&\leq -\sum_{x\in\mathcal{V}\cap B_{R}}\mu_{\mathcal{V}}(x)u_1^+(x)-\sum_{x\in\mathcal{V}\cap B_{2^{1/\theta_1}R}}\mu_{\mathcal{V}}(x)u_1^-(x)\\
	\nonumber&\quad-\sum_{e\in\mathcal{E}\cap \tilde{B}_{R}}\int_{0}^{l_e}u^+_{1,e}(x)dx-\sum_{e\in\mathcal{E}\cap \tilde{B}_{2^{1/\theta_1}R}}\int_{0}^{l_e}u^-_{1,e}(x)dx\\
	\nonumber&\leq -\sum_{x\in\mathcal{V}\cap B_{R-j}}\mu_{\mathcal{V}}(x)u_1^+(x)-\sum_{x\in\mathcal{V}\cap B_{2^{1/\theta_1}R+j}}\mu_{\mathcal{V}}(x)u_1^-(x)\\
	\nonumber&\quad-\sum_{e\in\mathcal{E}\cap B_{R-j}}\int_{0}^{l_e}u^+_{1,e}(x)dx-\sum_{e\in\mathcal{E}\cap B_{2^{1/\theta_1}R+j}}\int_{0}^{l_e}u^-_{1,e}(x)dx\\
	&=-\int_{B_{R-j}}u_1^+(x)d\mu_{\mathcal{G}}-\int_{B_{2^{1/\theta_1}R+j}}u_1^-(x)d\mu_{\mathcal{G}}.
\end{align}
It remains to estimate the term $I_4$.
\begin{equation}\label{s5:10}
	I_4=\int_0^\infty \int_{\mathcal{G}}s(s-1)\tau^{s-2}(x,t) \tau_t^2(x,t)u(x,t)d\mu_{\mathcal{G}}dt+\int_0^\infty \int_{\mathcal{G}}s\tau^{s-1}(x,t)\tau_{tt}(x,t) u(x,t)d\mu_{\mathcal{G}}dt.
\end{equation}
Recalling the definition of $\tilde{E}_R$ in \eqref{s4:27}, we first establish the following estimate: for any $x\in\mathcal{G}$ and $t\in[0,\infty)$,
\begin{equation}\label{s5:8}
	|\tau_{t}(x,t)|\leq \frac{C}{R^{\frac{1}{2}}}\textbf{1}_{\tilde{E}_R}(x,t),\quad	|\tau_{tt}(x,t)|\leq \frac{C}{R}\textbf{1}_{\tilde{E}_R}(x,t).
\end{equation}
Indeed, by $\theta_1\geq\theta_2/2$, we know from \eqref{s4:29}  that
$$	|\tau_{t}(x,t)|\leq \frac{C}{R^{\frac{\theta_1}{\theta_2}}}\textbf{1}_{\tilde{E}_R}(x,t)\leq\frac{C}{R^{\frac{1}{2}}}\textbf{1}_{\tilde{E}_R}(x,t).$$
Similarly, it follows from \eqref{s4:6} that
\begin{align*}
	|\tau_{tt}(x,t)|&=\le|\phi^{\prime\prime}\le(\frac{\tilde{d}(x,x_0)^{\theta_1}+t^{\theta_2}}{R^{\theta_1}}\ri)\frac{\theta_2^2t^{2\theta_2-2}}{R^{2\theta_1}}+\phi^\prime\le(\frac{\tilde{d}(x,x_0)^{\theta_1}+t^{\theta_2}}{R^{\theta_1}}\ri)\frac{\theta_2(\theta_2-1)t^{\theta_2-2}}{R^{\theta_1}}\ri|\\
	&\leq \frac{C}{R^{\frac{2\theta_1}{\theta_2}}}\textbf{1}_{\tilde{E}_R}(x,t)+\frac{C}{R^{\frac{2\theta_1}{\theta_2}}}\textbf{1}_{\tilde{E}_R}(x,t)\\
	&\leq \frac{C}{R}\textbf{1}_{\tilde{E}_R}(x,t),
\end{align*}
where we have used the facts $\theta_1\geq\theta_2/2$ and $t\leq CR^{\frac{\theta_1}{\theta_2}}$ whence $(x,t)\in \tilde{E}_R$. Combining \eqref{s5:10} and \eqref{s5:8}, and using the fact that  $s>2\sigma/(\sigma-1)>2$, by Lemma \ref{s4:L1}, we have
 \begin{align}\label{s5:18}
 	\nonumber I_4&\leq \frac{C}{R}\int_0^\infty \int_{\mathcal{G}}\tau^{s-2}(x,t) u(x,t)\textbf{1}_{\tilde{E}_R}(x,t)d\mu_{\mathcal{G}}dt+\frac{C}{R}\int_0^\infty \int_{\mathcal{G}}\tau^{s-1}(x,t) u(x,t)\textbf{1}_{\tilde{E}_R}(x,t)d\mu_{\mathcal{G}}dt\\
 	\nonumber&\leq  \frac{C}{R}\int_0^\infty \int_{\mathcal{G}}\tau^{s-2}(x,t) u(x,t)\textbf{1}_{\tilde{E}_R}(x,t)d\mu_{\mathcal{G}}dt\\
 	&\leq \frac{C}{R}\int_{0}^\infty \le\{\sum_{x\in\mathcal{V}}\mu_{\mathcal{V}}(x)u(x,t)\tau^{s-2}(x,t)\textbf{1}_{M_R}(x,t)\ri\}dt+\frac{C}{R}\int_0^\infty\le\{\sum_{e\in\mathcal{E}}\int_{0}^{l_e}u_e(x,t)\tau_e^{s-2}(x,t)\textbf{1}_{M_R}(x,t)dx \ri\}dt.
 \end{align}
 As $0\leq\tau\leq1$ and $s>2$, substituting \eqref{s5:1}, \eqref{s5:5} and \eqref{s5:18} into \eqref{s5:19} yields
 \begin{align*}
 	&\int_0^\infty \int_{\mathcal{G}}h(x,t)u^\sigma(x,t)\tau^s(x,t)d\mu_{\mathcal{G}} dt\\
 	&\leq\frac{C}{R}\int_{0}^\infty \le\{\sum_{x\in\mathcal{V}}\mu_{\mathcal{V}}(x)u(x,t)\tau^{s-2}(x,t)\textbf{1}_{M_R}(x,t)\ri\}dt+\frac{C}{R}\int_0^\infty\le\{\sum_{e\in\mathcal{E}}\int_{0}^{l_e}u_e(x,t)\tau_e^{s-2}(x,t)\textbf{1}_{M_R}(x,t)dx \ri\}dt\\
 	&\quad -\int_{B_{R-j}}u_1^+(x)d\mu_{\mathcal{G}}-\int_{B_{2^{1/\theta_1}R+j}}u_1^-(x)d\mu_{\mathcal{G}}.
 \end{align*}
 By applying the Young’s inequality, we get
 \begin{align*}
 	&\int_0^\infty \int_{\mathcal{G}}h(x,t)u^\sigma(x,t)\tau^s(x,t)d\mu_{\mathcal{G}} dt\\
 	&\leq \epsilon \int_0^\infty \le\{\sum_{x\in\mathcal{V}}\mu_{\mathcal{V}}(x) h(x,t)\tau^{s}(x,t)u^\sigma(x,t)\textbf{1}_{M_R}(x,t)\ri\}dt\\
 	\nonumber&\quad+\frac{C}{R^{\frac{\sigma}{\sigma-1}}}\int_0^\infty \le\{\sum_{x\in\mathcal{V}}\mu_{\mathcal{V}}(x)\tau^{s-\frac{2\sigma}{\sigma-1}}(x,t) h^{-\frac{1}{\sigma-1}}(x,t)\textbf{1}_{M_R}(x,t)\ri\}dt\\
 	\nonumber&\quad+\epsilon\int_0^\infty \le\{\sum_{e\in \mathcal{E}}\int_{0}^{l_e} h_e(x,t)\tau_e^{s}(x,t)u^\sigma_e(x,t)\textbf{1}_{M_R}(x,t)dx\ri\}dt\\
 	\nonumber&\quad+\frac{C}{R^{\frac{\sigma}{\sigma-1}}}\int_0^\infty \le\{\sum_{e\in \mathcal{E}}\int_{0}^{l_e}\tau_e^{s-\frac{2\sigma}{\sigma-1}}(x,t) h_e^{-\frac{1}{\sigma-1}}(x,t)\textbf{1}_{M_R}(x,t)dx\ri\}dt\\
 	&\quad -\int_{B_{R-j}}u_1^+(x)d\mu_{\mathcal{G}}-\int_{B_{2^{1/\theta_1}R+j}}u_1^-(x)d\mu_{\mathcal{G}}\\
 	&\leq \epsilon\int_0^\infty \int_{\mathcal{G}}h(x,t)u^\sigma(x,t)\tau^s(x,t)d\mu_{\mathcal{G}} dt+\frac{C}{R^{\frac{\sigma}{\sigma-1}}}\int_0^\infty \le\{\sum_{x\in\mathcal{V}}\mu_{\mathcal{V}}(x) h^{-\frac{1}{\sigma-1}}(x,t)\textbf{1}_{M_R}(x,t)\ri\}dt\\
 	&\quad+\frac{C}{R^{\frac{\sigma}{\sigma-1}}}\int_0^\infty \le\{\sum_{e\in \mathcal{E}}\int_{0}^{l_e} h_e^{-\frac{1}{\sigma-1}}(x,t)\textbf{1}_{M_R}(x,t)dx\ri\}dt -\int_{B_{R-j}}u_1^+(x)d\mu_{\mathcal{G}}-\int_{B_{2^{1/\theta_1}R+j}}u_1^-(x)d\mu_{\mathcal{G}}.
 \end{align*}
 Using \eqref{s5:7}, following the same procedure of \eqref{s5:24}, we deduce from \eqref{s3:17} that
 \begin{align*}
 	\int_0^\infty \int_{\mathcal{G}}h(x,t)u^\sigma(x,t) \textbf{1}_{F_R}(x,t)d\mu_{\mathcal{G}} dt&\leq\int_0^\infty \int_{\mathcal{G}}h(x,t)u^\sigma(x,t)\tau^s(x,t)d\mu_{\mathcal{G}} dt\\
 	&\leq C-C\le(\int_{B_{R-j}}u_1^+(x)d\mu_{\mathcal{G}}+\int_{B_{2^{1/\theta_1}R+j}}u_1^-(x)d\mu_{\mathcal{G}}\ri),
 \end{align*}
 where $F_R$ is a bounded domain defined by \eqref{s5:25}. Thus, by \eqref{s3:13}, we derive that
 $$\int_0^\infty \int_{\mathcal{G}}h(x,t)u^\sigma(x,t)d\mu_{\mathcal{G}}dt\leq C.$$
  The remainder of the proof follows exactly as in Theorem \ref{s2:T3} by applying H\"{o}lder’s inequality; the details are omitted here. $\hfill\Box$\\

 \noindent$\textbf{\emph{Proof of Theorem \ref{s2:T2}.}}$ Suppose that $u$ is a very weak solution to \eqref{s1:2}. We first define the test function $\Psi:\mathcal{G}\times[0,\infty)\to\mathbb{R}$ by 
 \begin{equation*}
 		\Psi(x,t):=\phi^s\le(\frac{t}{R^{\frac{1}{2}}}\ri)\psi\le(\frac{\tilde{d}(x,x_0)-j}{R}\ri),
 \end{equation*}
 	where $\phi$ and $\psi$ are defined in \eqref{s4:6} and Subsection \ref{3.3}, respectively, $s>2\sigma/(\sigma-1)$, and $R\geq \max\{1,2j\}$. Proceeding as in Lemma \ref{s5:L3}, we have the following estimates:
 	
 	(i)	For each edge $e\in\mathcal{E}$, all $x\in (0,l_e)$, and all $t\in[0,\infty)$, 
 	\begin{equation}\label{s5:26}
 		|\Psi_e^{\prime\prime}(x,t)|\leq \frac{C}{R}\phi^s\le(\frac{t}{R^\frac{1}{2}}\ri)e^{\frac{-\delta \tilde{d}(x,x_0)}{R}}\textbf{1}_{\tilde{B}_{R+j}^c}(x).
 	\end{equation}
 	
 	(ii) For any $x\in\mathcal{V}$ and $t\in[0,\infty)$, 
 	\begin{equation}\label{s5:27}
 		\le|\Delta_{\mathcal{V}}\Psi(x,t)\ri|\leq \frac{C}{R}\phi^s\le(\frac{t}{R^\frac{1}{2}}\ri)e^{\frac{-\delta d(x,x_0)}{R}}\textbf{1}_{B_{R}^c}(x).
 	\end{equation}
 	
 	(iii) For any $(x,t)\in\mathcal{G}\times[0,\infty)$, 
 	\begin{equation}\label{s5:28}
 		|\Psi_{t}(x,t)|\leq \frac{C}{R^\frac{1}{2}}\phi^{s-1}\le(\frac{t}{R^\frac{1}{2}}\ri)e^{\frac{-\delta d(x,x_0)}{R}}\textbf{1}_{P_R}(x,t), \quad 	|\Psi_{tt}(x,t)|\leq \frac{C}{R}\phi^{s-2}\le(\frac{t}{R^\frac{1}{2}}\ri)e^{\frac{-\delta d(x,x_0)}{R}}\textbf{1}_{P_R}(x,t),
 	\end{equation}
 	where  $P_R:=\mathcal{G}\times[R^\frac{1}{2},2R^\frac{1}{2}]$.
 	
 	Next, since $u\in L^1_{\text{loc}}([0,\infty); X_\alpha)$ with $\alpha=\delta/R>0$, by the argument in Lemma \ref{s4:L2}, we also have
 	 \begin{align*}
 		\nonumber\int_0^\infty\int_{\mathcal{G}}\Psi(x,t) \Delta_{\mathcal{G}}u(x,t) d\mu_{\mathcal{G}} dt=\int_0^\infty\int_{\mathcal{G}}u(x,t) \Delta_{\mathcal{G}}\Psi(x,t) d\mu_{\mathcal{G}} dt.
 	\end{align*} 
Moreover, $\Psi$ is admissible as a test function; we omit the verification to avoid redundancy.

We now estimate each term on the right-hand side of
\begin{align*}
		\nonumber&\int_0^\infty \int_{\mathcal{G}}h(x,t)|u(x,t)|^\sigma\Psi(x,t)d\mu_{\mathcal{G}} dt\\
	\nonumber&\leq
	-\int_0^\infty \int_{\mathcal{G}} u(x,t) \Delta_{\mathcal{G}}\Psi(x,t)d\mu_{\mathcal{G}}dt+\int_0^\infty \int_{\mathcal{G}}\Psi_{tt}(x,t) u(x,t)d\mu_{\mathcal{G}}dt\\
	\nonumber&\quad-\int_\mathcal{G}u_1(x)\Psi(x,0)d\mu_{\mathcal{G}}+\int_\mathcal{G}u_0(x)\Psi_t(x,0)d\mu_{\mathcal{G}}.
\end{align*}
We denote these terms by $L_1,L_2,L_3,L_4$, respectively.
By \eqref{s3:4} and \eqref{s3:5}, it follows from \eqref{s5:26} and \eqref{s5:27} that
\begin{align*}
|L_1|&\leq\int_0^\infty\le\{\sum_{x\in\mathcal{V}}\mu_{\mathcal{V}}(x)|u(x,t)| \le|\Delta_{\mathcal{V}}\Psi(x,t) \ri|\ri\}dt+\int_0^\infty \le\{\sum_{e\in\mathcal{E}}\int_{0}^{l_e}|u_e(x,t)||\Psi_{e}^{\prime\prime}(x,t)|dx \ri\}dt\\
	\nonumber&\leq \frac{C}{R}\int_0^\infty\le\{ \sum_{x\in\mathcal{V}}\mu_{\mathcal{V}}(x)|u(x,t)| \phi^s\le(\frac{t}{R^\frac{1}{2}}\ri)e^{\frac{-\delta d(x,x_0)}{R}}\textbf{1}_{ B_{R}^c}(x)\ri\}dt\\
	&\quad +\frac{C}{R} \int_0^\infty\le\{\sum_{e\in\mathcal{E}}\int_{0}^{l_e} |u_e(x,t)| \phi^s\le(\frac{t}{R^\frac{1}{2}}\ri)e^{\frac{-\delta \tilde{d}(x,x_0)}{R}}\textbf{1}_{ B_{R}^c}(x)dx\ri\}dt\\
		\nonumber&\leq  \epsilon C\int_0^\infty \le\{\sum_{x\in\mathcal{V}}\mu_{\mathcal{V}}(x) |u(x,t)|^\sigma h(x,t) \phi^s\le(\frac{t}{R^\frac{1}{2}}\ri)\psi\le(\frac{d(x,x_0)-j}{R}\ri)\textbf{1}_{ B_{R}^c}(x)\ri\}dt \\
		&\quad+\frac{C_\epsilon}{R^{\frac{\sigma}{\sigma-1}}}\int_0^\infty \le\{\sum_{x\in\mathcal{V}}\mu_{\mathcal{V}}(x) h^{-\frac{1}{\sigma-1}}(x,t) \phi^s\le(\frac{t}{R^\frac{1}{2}}\ri)e^{\frac{-\delta d(x,x_0)}{R}}\textbf{1}_{ B_{R}^c}(x)\ri\}dt \\
	\nonumber&\quad+ \epsilon C \int_0^\infty\le\{\sum_{e\in\mathcal{E}}\int_{0}^{l_e} |u_e(x,t)|^\sigma h_e(x,t) \phi^s\le(\frac{t}{R^\frac{1}{2}}\ri) \psi_e\le(\frac{\tilde{d}(x,x_0)-j}{R}\ri)\textbf{1}_{ B_{R}^c}(x)dx\ri\}dt\\
	&\quad +\frac{C_\epsilon}{R^{\frac{\sigma}{\sigma-1}}}\int_0^{\infty}\le\{\sum_{e\in\mathcal{E}}\int_{0}^{l_e} h_e^{-\frac{1}{\sigma-1}}(x,t)\phi^{s}\le(\frac{t}{R^\frac{1}{2}}\ri)e^{-\delta\frac{ d(x,x_0)}{R}}\textbf{1}_{ B_{R}^c}(x)dx\ri\}dt\\
	\nonumber&\leq  \frac{1}{4}\int_0^\infty \int_{\mathcal{G}}h(x,t)|u(x,t)|^\sigma\Psi(x,t)\textbf{1}_{ B_{R}^c}(x)d\mu_{\mathcal{G}} dt\\
	&\quad+\frac{C}{R^{\frac{\sigma}{\sigma-1}}}\int_0^{2R^\frac{1}{2}} \le\{\sum_{x\in\mathcal{V}\cap B_{R}^c}\mu_{\mathcal{V}}(x) h^{-\frac{1}{\sigma-1}}(x,t) e^{\frac{-\delta d(x,x_0)}{R}}\ri\}dt \\
	&\quad+\frac{C}{R^{\frac{\sigma}{\sigma-1}}}\int_0^{2R^\frac{1}{2}}\le\{\sum_{e\in\mathcal{E}\cap B_{R}^c}\int_{0}^{l_e} h_e^{-\frac{1}{\sigma-1}}(x,t)e^{\frac{-\delta d(x,x_0)}{R}}dx\ri\}dt\\
	&\leq  \frac{1}{4}\int_0^\infty \int_{\mathcal{G}}h(x,t)|u(x,t)|^\sigma\Psi(x,t)\textbf{1}_{ B_{R}^c}(x)d\mu_{\mathcal{G}} dt+C.
\end{align*}
A similar argument as in \eqref{s5:23} and \eqref{s5:14} applies here.
By \eqref{s5:28} and $s>2\sigma/(\sigma-1)$, using \eqref{s3:2} and \eqref{s3:3}, we deduce
\begin{align*}
	\nonumber\le|L_2\ri|&\leq \frac{C}{R}\int_0^\infty \int_{\mathcal{G}}|u(x,t)| \phi^{s-2}\le(\frac{t}{R^\frac{1}{2}}\ri)e^{\frac{-\delta d(x,x_0)}{R}}\textbf{1}_{P_R}(x,t) d\mu_{\mathcal{G}}dt\\
	\nonumber&\leq \epsilon \int_{R^\frac{1}{2}}^{2R^\frac{1}{2}} \le\{\sum_{x\in\mathcal{V}}\mu_{\mathcal{V}}(x)|u(x,t)|^\sigma h(x,t) \phi^{s}\le(\frac{t}{R^\frac{1}{2}}\ri)e^{\frac{-\delta d(x,x_0)}{R}}\ri\}dt\\
	&\quad+\frac{C_\epsilon}{R^{\frac{\sigma}{\sigma-1}}}\int_{R^\frac{1}{2}}^{2R^\frac{1}{2}} \le\{\sum_{x\in\mathcal{V}}\mu_{\mathcal{V}}(x) h^{-\frac{1}{\sigma-1}}(x,t) \phi^{s-\frac{2\sigma}{\sigma-1}}\le(\frac{t}{R^\frac{1}{2}}\ri)e^{\frac{-\delta d(x,x_0)}{R}} \ri\} dt\\
	\nonumber&\quad+ \epsilon\int_{R^\frac{1}{2}}^{2R^\frac{1}{2}}\le\{\sum_{e\in\mathcal{E}}\int_{0}^{l_e}|u_e(x,t)|^\sigma h_e(x,t)\phi^{s}\le(\frac{t}{R^\frac{1}{2}}\ri)e^{\frac{-\delta d(x,x_0)}{R}}dx \ri\}dt\\
	\nonumber&\quad+\frac{C_\epsilon}{R^{\frac{\sigma}{\sigma-1}}}\int_{R^\frac{1}{2}}^{2R^\frac{1}{2}}\le\{\sum_{e\in\mathcal{E}}\int_{0}^{l_e}h_e^{-\frac{1}{\sigma-1}}(x,t)\phi^{s-\frac{2\sigma}{\sigma-1}}\le(\frac{t}{R^\frac{1}{2}}\ri)e^{\frac{-\delta d(x,x_0)}{R}}dx \ri\}dt\\
	\nonumber&\leq \epsilon C \int_{R^\frac{1}{2}}^{2R^\frac{1}{2}} \le\{\sum_{x\in\mathcal{V}}\mu_{\mathcal{V}}(x)|u(x,t)|^\sigma h(x,t) \phi^{s}\le(\frac{t}{R^\frac{1}{2}}\ri)\psi\le(\frac{d(x,x_0)-j}{R}\ri)\ri\}dt\\
	&\nonumber\quad+ \epsilon C\int_{R^\frac{1}{2}}^{2R^\frac{1}{2}}\le\{\sum_{e\in\mathcal{E}}\int_{0}^{l_e}|u_e(x,t)|^\sigma h_e(x,t)\phi^{s}\le(\frac{t}{R^\frac{1}{2}}\ri)\psi_e\le(\frac{\tilde{d}(x,x_0)-j}{R}\ri)dx \ri\}dt\\
	\nonumber&\quad+\frac{C_\epsilon}{R^{\frac{\sigma}{\sigma-1}}}\int_{R^\frac{1}{2}}^{2R^\frac{1}{2}} \le\{\sum_{x\in\mathcal{V}}\mu_{\mathcal{V}}(x) h^{-\frac{1}{\sigma-1}}(x,t) e^{\frac{-\delta d(x,x_0)}{R}} \ri\}dt\\
	&\quad+\frac{C_\epsilon}{R^{\frac{\sigma}{\sigma-1}}}\int_{R^\frac{1}{2}}^{2R^\frac{1}{2}}\le\{\sum_{e\in\mathcal{E}}\int_{0}^{l_e}h_e^{-\frac{1}{\sigma-1}}(x,t)e^{\frac{-\delta d(x,x_0)}{R}}dx \ri\}dt\\
	&\leq \frac{1}{4} \int_{R^\frac{1}{2}}^{2R^\frac{1}{2}}\int_\mathcal{G} |u(x,t)|^\sigma h(x,t)\Psi(x,t)d\mu_{\mathcal{G}}dt+C.
\end{align*}
For $L_3$, since $\Phi(x,0)=\Psi(x,0)$ for all $x\in\mathcal{G}$, by \eqref{s5:15} we have
 $$L_3\leq-\int_{B_{R}}u_1^+(x)d\mu_{\mathcal{G}}-\int_\mathcal{G}u_1^-(x)d\mu_{\mathcal{G}}.$$
 Finally, noting that
 $$	\Psi_{t}(x,t)=\frac{s}{R^\frac{1}{2}}\phi^{s-1}\le(\frac{t}{R^\frac{1}{2}}\ri)\phi^\prime\le(\frac{t}{R^\frac{1}{2}}\ri)\psi\le(\frac{\tilde{d}(x,x_0)-j}{R}\ri),$$
we immediately get
 $$\Psi_{t}(x,0)=0,\quad\forall\ x\in\mathcal{G},$$
which implies that
$$L_4=0.$$

Combining the above estimates leads to
\begin{align*}
	\int_0^{R^\frac{1}{2}} \int_{\tilde{B}_{R+j}}h(x,t)|u(x,t)|^\sigma d\mu_{\mathcal{G}} dt&\leq \int_0^\infty \int_{\mathcal{G}}h(x,t)|u(x,t)|^\sigma\Psi(x,t)d\mu_{\mathcal{G}} dt\\
	&\leq C-C\le(\int_{B_{R}}u_1^+(x)d\mu_{\mathcal{G}}+\int_\mathcal{G}u_1^-(x)d\mu_{\mathcal{G}}\ri),
	\end{align*}
 which, together with \eqref{s3:6}, yields that 
 $$\int_0^\infty \int_{\mathcal{G}}h(x,t)|u(x,t)|^\sigma d\mu_{\mathcal{G}} dt\leq C.$$
The final part follows similarly to Theorem \ref{s2:T4} using H\"{o}lder’s inequality; we omit it.$\hfill\Box$\\

 \noindent$\textbf{\emph{Proof of Corollary \ref{s2:C6}.}}$ By the same argument as in Corollaries \ref{s2:C2} and \ref{s2:C3}, Corollary \ref{s2:C6} is an immediate consequence of Theorem \ref{s2:T2}. $\hfill\Box$\\

\begin{appendices}

	\noindent
	\textbf{Appendix}   \\

	For the reader’s convenience, we recall the fundamental notions and preliminaries on metric graphs, as detailed results can be found in \cite {B-K,M,M-P,P-T1,P-T2,L-L-Z}.
	
	\section{The metric graph setting}\label{A}
	
	Like combinatorial graphs, a metric graph comprises a countable set $\mathcal{V}$ of vertices and a countable set $E$ of edges. In contrast to combinatorial graphs, however, the edges are treated as intervals glued together at the vertices. Given a function $l:E\ra(0,+\infty]$, it is usually referred to as a weight. We consider the weighted graph $(\mathcal{V}, E, l)$ and regard $l(e)$ as the length of the edge $e\in E$ (denoted as $l_e$ for short). Let 
	$$\mathcal{E}:=\bigcup_{e\in E}\{e\}\times(0,l_e).$$
	We may give the following definition.
	\begin{definition}\label{s0:D1}
		The metric graph $\mathcal{G}$ over the weighted graph $(\mathcal{V}, E, l)$ is the pair $(\mathcal{V},\mathcal{E})$.
	\end{definition}
	We equip the metric graph $\mathcal{G}$ with maps $i : E \to \mathcal{V}$ assigning the initial vertex of each edge and $j : \{e \in E : l_e < +\infty\} \to \mathcal{V}$ assigning the final vertex, with these vertices collectively referred to as the endpoints of the edge. We always assume for simplicity that  $l_e<+\infty$ for all $e\in E$, and use the following notations,
	$$I_e:=(0,l_e),\quad\mathcal{G}_e:=\{e\}\times I_e,\quad\overline{\mathcal{G}}_e:=e\cup\{i(e),j(e)\}\times\overline{I}_e.$$ 
	For $e \in E$ and $v\in \mathcal{V}$, we write $e\ni v$ (or $v\in e$) if \(i(e)=v\) or \(j(e)=v\) (i.e., $v$ is an endpoint of $e$). In what follows, we adopt the notational convention of denoting points in $\mathcal{G}$ as $x \in \mathcal{G}$, where either $x = v \in \mathcal{V}$ or $x \in \mathcal{G}_e$ for some $e \in E$. For simplicity, we sometimes make no distinction between $e$ and $I_e$, performing this identification by abuse of language; accordingly, we may write $x\in e$ or $x\in I_e$ instead of $x\in\mathcal{G}_e$, and denote $e\in\mathcal{E}$ as $\mathcal{G}_e\in\mathcal{E}$, without causing confusion. Moreover, this practice introduces no ambiguity when the same notation $x, y, \dots$ is used to denote both points of the edge $e \in E$ and points of the interval $I_e \subseteq \mathbb{R}_+$. For each $e \in \mathcal{E}$, the map $\pi_e \colon \mathcal{G}_e \to I_e$ defined by $\pi_e(\{e\}, x) \equiv \pi_e(x) := x$ sets up a bijection between points of $e\in E$ and points of $I_e$. This map can be extended to a mapping from $\overline{\mathcal{G}}_e$ to $\overline{I}_e = [0, l_e]$ such that \(\pi_e(i(e)) = 0\) and \(\pi_e(j(e)) = l_e\).
	
		\begin{figure}[htbp]
		\centering
		
		\begin{tikzpicture}[scale=0.8]
			
			
			\coordinate (v1)  at (0,0);
			\coordinate (v2)  at (1.8,1.2);
			\coordinate (v3)  at (2.0,-1.4);
			\coordinate (v4)  at (4.0,0.3);
			\coordinate (v5)  at (5.6,1.8);
			\coordinate (v6)  at (6.2,-1.0);
			\coordinate (v7)  at (8.0,0.9);
			
			\coordinate (v8)  at (-1.3,1.0);
			\coordinate (v9)  at (-1.0,-0.9);
			
			\coordinate (v10) at (9.2,2.0);
			\coordinate (v11) at (9.4,-0.3);
			
			\coordinate (v12) at (5.0,-2.2);
			
			
			\draw[thick] (v1) to[bend left=10] node[above left] {$e_1$} (v2);
			
			\draw[thick] (v1) to[bend right=12] node[below left] {$e_2$} (v3);
			
			\draw[thick] (v2) to[bend left=8] node[above] {$e_3$} (v4);
			
			\draw[thick] (v3) to[bend right=10] node[below] {$e_4$} (v4);
			
			\draw[thick] (v4) to[bend left=10] node[above] {$e_5$} (v5);
			
			\draw[thick] (v4) to[bend right=10] node[below] {$e_6$} (v6);
			
			\draw[thick] (v5) to[bend left=8] node[above] {$e_7$} (v7);
			
			\draw[thick] (v6) to[bend right=8] node[below] {$e_8$} (v7);
			
			\draw[thick] (v6) to[bend right=15] node[right] {$e_9$} (v12);
			
			
			\draw[dashed] (v1)--(v8);
			\draw[dashed] (v1)--(v9);
			
			\draw[dashed] (v7)--(v10);
			\draw[dashed] (v7)--(v11);
			
			
			\draw[dashed] (v2)--++(-0.6,1.0);
			\draw[dashed] (v3)--++(-0.5,-1.0);
			
			\draw[dashed] (v5)--++(0.5,1.1);
			
			\draw[dashed] (v12)--++(0.5,-1.1);
			
			\node at (0.9,2.5) {$\cdots$};
			\node at (1.1,-2.7) {$\cdots$};
			
			\node at (6.5,3.0) {$\cdots$};
			
			\node at (6.0,-3.4) {$\cdots$};

			
			\fill (v1) circle (2.2pt);
			\fill (v2) circle (2.2pt);
			\fill (v3) circle (2.2pt);
			\fill (v4) circle (2.2pt);
			\fill (v5) circle (2.2pt);
			\fill (v6) circle (2.2pt);
			\fill (v7) circle (2.2pt);
			\fill (v12) circle (2.2pt);
			
			
			\node[left]  at (v1) {$v_1$};
			
			\node[above] at (v2) {$v_2$};
			
			\node[below] at (v3) {$v_3$};
			
			\node[above] at (v4) {$v_4$};
			
			\node[above] at (v5) {$v_5$};
			
			\node[below] at (v6) {$v_6$};
			
			\node[right] at (v7) {$v_7$};
			
			\node[below] at (v12) {$v_{12}$};
			
			\node at (-1.7,1.3) {$\cdots$};
			\node at (-1.5,-1.2) {$\cdots$};
			
			\node at (9.8,2.3) {$\cdots$};
			\node at (9.9,-0.5) {$\cdots$};
			
		\end{tikzpicture}
		
		\caption{
			An infinite locally finite metric graph \(\mathcal{G}=(\mathcal{V},\mathcal{E})\). 
			Each edge is identified with a compact interval of finite length.
		}
		
		\label{fig:metricgraph}
		
	\end{figure}
	
	\begin{definition}
		Let $\mathcal{G}$ be a metric graph.
		
		(i) A metric graph $\mathcal{G}$ is finite if both $E$ and $\mathcal{V}$ are finite sets; it is infinite otherwise.
		
		(ii) For a vertex $v\in\mathcal{V}$,  its degree $\deg_v \in \mathbb{N}$ counts the number of edges $e \ni v$. The inbound degree $\deg_v^+$ (resp. outbound degree $\deg_v^-$) refers to the number of edges with $j(e) = v$ (resp. $i(e) = v$). Obviously, $\deg_v=\deg_v^++\deg_v^-$. $\mathcal{G}$ is locally finite if $\deg_v < \infty$ for all $v \in \mathcal{V}$. 
		
		(iii) For two vertices $u, v \in \mathcal{V}$, a path connecting them is a set $\{x_1, \dots, x_n\} \subset \mathcal{G}$ ($n \in \mathbb{N}$) such that $x_1 = u$, $x_n = v$, and for each $k = 1, \dots, n-1$, there exists an edge $e_k$ where both $x_k$ and $ x_{k+1}$ lie in $ \overline{\mathcal{G}}_{e_k}$. A path is closed if its start and end vertices coincide $(u = v)$. A closed path is termed a cycle if it does not pass through the same vertex more than once.
		
		(iv) A metric graph $\mathcal{G}$ is connected if there exists a path between any two distinct vertices $v, w \in \mathcal{V}$. A connected graph with no cycles is called a tree.
		
		(v) The boundary of the metric graph is given by $\p\mathcal{G} := \{v\in\mathcal{V}\mid \deg_v=1\}$.
	\end{definition}

	\section{Two volume measures}\label{B}
	
	In traditional combinatorial analysis, concepts such as integrals, Laplacians, and Rayleigh quotients are all defined using a single volume measure. In this paper, we depart from this convention by employing two distinct volume measures.
	
	Let $\mu_{\mathcal{V}}:\mathcal{V}\ra\mathbb{R}$ be a vertex measure supported on the vertex set $\mathcal{V}$, with $\mu_{\mathcal{V}}(v) > 0$ for every $v \in \mathcal{V}$. We then define an edge measure. A connected metric graph $\mathcal{G}$ can naturally be endowed with the structure of a metric measure space. To elaborate, for any two points $x, y \in \mathcal{G}$, we may treat them as vertices of a connecting path $P$ (with $x$ and $y$ possibly added to the vertex set $\mathcal{V}$ if necessary). The length of $P$ is defined as the sum of the length of its $n$ edges $e_k$, i.e., $l(P) := \sum_{k=1}^n l_{e_k}$. The distance $ d(x, y)$ between $x$ and $y$ is then given by the infimum of the lengths of all such connecting paths:
	$$d(x, y) \coloneqq \inf \left\{ l(P) \mid P \text{ connects } x \text{ and } y \right\}.$$
	This makes $\mathcal{G}$ a metric space, which in turn induces a topological structure via the metric topology. Let $\mathcal{B} = \mathcal{B}(\mathcal{G})$ denote the Borel $\sigma$-algebra of $\mathcal{G}$. Let $B(x_0,r)$ denote the open ball on the metric graph $\mathcal{G}$ with center $x_0\in\mathcal{G}$ and radius $r>0$, which consists of all points in $\mathcal{G}$ whose distance from $x_0$ is less than $r$. If $\mathcal{G}$ is locally finite, then $B(x_0,r)$ is a union of finitely many open subintervals of edges and finitely many entire edges for $r$ small enough.  We define the edge measure $\mu_{\mathcal E}$ on $\mathcal B(\mathcal G)$ by
	\begin{equation}\label{s2:8}
	\mu_{\mathcal E}(\Omega)=\sum_{e\in\mathcal E}\lambda(I_e\cap\Omega),\quad\forall\ \O\in\mathcal{B},
    \end{equation}
	where $\lambda$ is the Lebesgue measure on each interval $I_e$. It is straightforward to verify that $\mu_{\mathcal E}$ is a Radon measure on $\mathcal G$ and satisfies $\mu_{\mathcal{E}}(v)=0$ for all $v\in\mathcal{V}$. 
	
	\section{Functional spaces on metric graphs}\label{C}
	Let
	$$
	\mathcal F:=\prod_{e\in\mathcal E}
	\{f_e:\overline I_e\to\mathbb R\}
	$$
	denote the collection of all families of functions defined on the closed edges of $\mathcal G$. For $f=\{f_e\}_{e\in\mathcal E}\in\mathcal F$, we write
	$$
	f=\bigoplus_{e\in\mathcal E}f_e.
	$$
	We say that  $f$ is compatible at the vertices if, for every vertex $v\in\mathcal V$, the values $f_e(v)$ coincide for all edges $e$ incident to $v$. We denote by $\mathcal F_0$ the subspace of all vertex-compatible families. In this case, the common value at $v$ is denoted by $f(v)$, and $f$ can be naturally identified with a function on the metric graph $\mathcal G$. Throughout this paper, all functions under consideration belong to $\mathcal F_0$. Hence, unless otherwise stated, every element of $\mathcal F_0$ will be regarded as a well-defined function on $\mathcal G$. Accordingly, we shall identify $\mathcal F_0$ with the space of functions on $\mathcal G$ and make no distinction between the two notions.
		
	We define $f^{(h)} \coloneqq \bigoplus_{e\in \mathcal{E}} f^{(h)}_e$ for $h \in \mathbb{N}$, whenever the derivative $f^{(h)}_e \coloneqq \frac{d^h f_e}{dx^h}$ exists on $I_e$ for all $e \in E$. We also adopt the notation $f^{(0)} \equiv f$, $f^{(1)} \equiv f^\prime$ and $f^{(2)} \equiv f^{\prime\prime}$. We say that $f\in\mathcal F_0$ is continuous on $\mathcal{G}$, writing $f \in C(\mathcal{G})$, if $f_e \in C(\overline{I}_e)$ for all $e \in E$. We set
	$$C^k(\mathcal{G}):=\{f\in C(\mathcal{G})\mid f_e\in C^{k}(\overline{I}_e), \forall e\in\mathcal{E}, f^{(h)}\in C(\mathcal{G}),\forall h=1,\dots,k\},\quad(k\in\mathbb{N}),$$
	and $C^0(\mathcal{G})=C(\mathcal{G})$. 
	
	Based on \eqref{s2:8}, for every function $f\in\mathcal F_0$ such that the following series and integrals are well defined, we set 
	\begin{equation*}
		\int_\mathcal{G}f(x) d\mu_{\mathcal{E}}:=\sum_{e\in \mathcal{E}}\int_0^{l_e} f_e(x)dx,\quad \int_{\Omega} f(x)d\mu_{\mathcal{E}}:=\int_\mathcal{G}f(x) \textbf{1}_{\Omega}(x)d\mu_{\mathcal{E}},\quad\forall\ \Omega\in\mathcal{B}.
	\end{equation*}
	Here, $\textbf{1}_{\O}$ denotes the characteristic function of the set $\O$, and we use the standard notation $dx \equiv d\lambda$. In analogy with the discrete graph setting, we define
	\begin{equation*}
		\int_\mathcal{G}f(x) d\mu_{\mathcal{V}}:=\sum_{x\in\mathcal{V}}\mu_{\mathcal{V}}(x)f(x),\quad \int_{\Omega} f(x)d\mu_{\mathcal{V}}:=\int_\mathcal{G}f(x) \textbf{1}_{\Omega}(x)d\mu_{\mathcal{V}},\quad\forall\ \Omega\in\mathcal{B}.
	\end{equation*}
	The vertex-based and edge-based integral forms as above determine an integral over the metric graph:
	\begin{equation}\label{s2:22}
		\int_{\mathcal{G}}f(x)d\mu_{\mathcal{G}}:=\int_\mathcal{G}f(x) d\mu_{\mathcal{V}}+	\int_\mathcal{G}f(x) d\mu_{\mathcal{E}},\quad\forall\ f\in \mathcal{F}_0.
	\end{equation}
	The measure $\mu_\mathcal G$ combines the discrete contribution of the vertices and the continuous contribution of the edges.

	For $1\leq p<\infty$, the Lebesgue spaces on the metric graph $\mathcal{G}$ (denoted $L^p(\mathcal{G})$ or $L^p(\mathcal{G}, \mu_\mathcal{G})$) take the form
	$$
	L^p(\mathcal G)
	:=
	\left\{
	f\in \mathcal F_0:
	\sum_{e\in\mathcal E}\int_0^{l_e}|f_e(x)|^p\,dx
	+
	\sum_{x\in\mathcal V}\mu_{\mathcal V}(x)|f(x)|^p
	<\infty
	\right\},
	$$
	endowed with the norm
	$$
	\|f\|_{L^p(\mathcal G)}
	=
	\left(
	\sum_{e\in\mathcal E}\int_0^{l_e}|f_e(x)|^p\,dx
	+
	\sum_{x\in\mathcal V}\mu_{\mathcal V}(x)|f(x)|^p
	\right)^{1/p}.
	$$
	For $p=\infty$, we set
	$$
\|f\|_{L^\infty(\mathcal G)}
=
\sup_{x\in\mathcal G}|f(x)|.
	$$
	
	We further let \(\varphi:\mathcal G\to\mathbb R\) be a positive continuous function. For each \(p\in[1,\infty)\), we define the weighted Lebesgue space \(L^p_\varphi(\mathcal G)\) by
	\begin{align}\label{s2:13}
		L^p_\varphi(\mathcal G)
		:=\Bigg\{
		f\in\mathcal F_0:\ 
		\sum_{x\in\mathcal V}\mu_{\mathcal V}(x)\varphi(x)|f(x)|^p +\sum_{e\in\mathcal E}\int_0^{l_e}|f_e(x)|^p\varphi_e(x)\,dx<\infty
		\Bigg\}.
	\end{align}
	It is endowed with the norm
	\[
	\|f\|_{L^p_\varphi(\mathcal G)}
	=
	\left(
	\sum_{x\in\mathcal V}\mu_{\mathcal V}(x)\varphi(x)|f(x)|^p
	+
	\sum_{e\in\mathcal E}\int_0^{l_e}|f_e(x)|^p\varphi_e(x)\,dx
	\right)^{1/p}.
	\]

	\section{The Laplacian on metric graphs}\label{D}
	
	Let $\omega:\mathcal V\times\mathcal V\to[0,\infty)$ be a symmetric weight such that $\omega(x,y)>0$ if and only if $x\sim y$. For any $f\in \mathcal{F}_0$, the vertex-based Laplacian on $\mathcal{G}$ is defined as
	\begin{equation}\label{s2:7}
		\Delta_\mathcal{V} f(x)=\frac{1}{\mu_\mathcal{V}(x)}\sum_{y\sim x}\o(x,y)(f(y)-f(x)),
	\end{equation}
	where $y\sim x$ means $y$ is adjacent to $x$, i.e., $(x,y)\in E$. This is the usual combinatorial Laplacian. It is not difficult to see that the following integration by parts formula is valid:
	\begin{equation*}
	\sum_{x\in\mathcal{V}}\mu_{\mathcal{V}}(x)f(x)\Delta_{\mathcal{V}}g(x)=-\frac{1}{2}\sum_{x\in\mathcal V}\sum_{y\sim x}\o(x,y)(f(y)-f(x))(g(y)-g(x))=\sum_{x\in\mathcal{V}}\mu_{\mathcal{V}}(x)g(x)\Delta_{\mathcal{V}}f(x),
	\end{equation*}
	provided that at least one of the functions $f,g\in \mathcal{F}_0$ has finite support.
	
	We consider a space
	\begin{equation}\label{s2:2}
		\mathcal{D}(\mathcal{G}):=\left\{f\in C(\mathcal{G}): f_e\in C^2(I_e)\cap C^1(\overline{I}_e), f_e^{\prime\prime}\in L^\infty(I_e), \forall\ e\in\mathcal{E}\right\},
	\end{equation}
	and note that if $f \in \mathcal{D}(\mathcal{G})$, then $f$ is in $C(\mathcal{G})$, but generally not in $C^1(\mathcal{G})$; specifically, for a vertex $v \in \overline{\mathcal{G}}_{e_1} \cap \overline{\mathcal{G}}_{e_2}$ (where \(e_1, e_2 \in \mathcal E\)), it may hold that $f^\prime_{e_1}(v) \neq f^\prime_{e_2}(v)$. 
	
	Within the above functional framework, we define the edge-based Laplacian, an operator that acts on $\mathcal{D}(\mathcal{G})$ in the canonical way
	\begin{equation}\label{s2:12}
		\Delta_{\mathcal{E}}  u(x):=u^{\prime\prime}_e(x),\quad\forall\ u\in \mathcal{D}(\mathcal{G}),\ e\in\mathcal{E},\ x\in I_e.
	\end{equation}
	The outer normal derivative of $u_e$ at a vertex $v \in \mathcal{V}$ is denoted by
	\begin{equation}\label{s2:3}\frac{du_e(v)}{dn}=\left\{\begin{array}{lll}
			u_e^\prime(v), &\text{if}\ j(e)=v,&\\[1ex]
			-u_e^\prime(v),&\text{if}\ i(e)=v.&\end{array}\ri.		
	\end{equation}
	For any $x\in\mathcal{V}$, we define 
	\begin{equation}\label{s2:4}
		[\mathcal{K}(u)](x):=\sum_{e\ni x}\frac{du_e(x)}{dn}.
	\end{equation}
	Throughout the paper, every function $u\in\mathcal D(\mathcal G)$
	is assumed to satisfy the Kirchhoff condition
	\[
	[\mathcal K(u)](x)=0,
	\quad x\in\mathcal V.
	\]
	For interior vertices $x\in \mathcal{G}\setminus\p\mathcal{G}$, the condition $ [\mathcal{K}(u)](x)= 0$ is referred to as the Kirchhoff transmission condition; for boundary vertices $x \in \partial\mathcal{G}$, this condition corresponds to the homogeneous Neumann boundary condition. 
	
	Since the vertex and edge contributions are associated with different measures, it is natural to combine them into a single Laplacian in the integral sense. Specifically, we need to mark functions with $d\mu_{\mathcal{V}}$ or $d\mu_{\mathcal{E}}$ to clarify how the function should be integrated against other functions. In this paper, we use a similar setting as in \cite{F-T2,Fr} and always consider the Laplacian $\Delta_\mathcal{G}$ of the form
	\begin{equation}\label{s2:14}
		\Delta_{\mathcal{G}}f:=	(\Delta_{\mathcal{V}}f)d\mu_{\mathcal{V}}+(\Delta_{\mathcal{E}}f)d\mu_{\mathcal{E}},\quad\forall f \in \mathcal{D}(\mathcal{G}),
	\end{equation}
	where $\Delta_{\mathcal{V}}$ and $\Delta_{\mathcal{E}}$ are defined in accordance with \eqref{s2:7} and \eqref{s2:12}. According to \cite{F-T1}, as an integrating factor, $\Delta_{\mathcal{G}}f$ naturally induces a linear functional $\mathcal{L}_{\Delta_{\mathcal{G}}f}$ on $\mathcal F_0$ by means of 
	\begin{equation}\label{s2:1}
		\mathcal{L}_{\Delta_{\mathcal{G}}f}(g):=\int_\mathcal{G} g \Delta_{\mathcal{G}}f=\int_\mathcal{G}g(\Delta_{\mathcal{V}}f)d\mu_{\mathcal{V}}+\int_\mathcal{G}g(\Delta_{\mathcal{E}}f)d\mu_{\mathcal{E}}.
	\end{equation}
	The quantity $\Delta_{\mathcal G}f$ is not regarded as a pointwise-defined function. Instead, it is understood through its action on test functions via the integral identity \eqref{s2:1}. 
\end{appendices}\\

\noindent
\textbf{Acknowledgements} This paper is supported by the National Natural Science Foundation of China (Grant Number: 12471088).\\

\noindent\textbf{Conflict of interest}
The author declares there are no conflicts of interest regarding the publication of this paper.
\\

\noindent\textbf{Data availability}
Data sharing not applicable as no datasets were used or analysed during the current study.
\\

\end{document}